\documentclass[12pt]{article}
\usepackage{graphicx}
\usepackage{graphics}
\usepackage{subfigure}
\usepackage{verbatim}
\usepackage{textcomp}
\usepackage{amssymb}
\usepackage{cite}
\usepackage{amsmath}
\usepackage{latexsym}
\usepackage{amscd}
\usepackage{amsthm}
\usepackage{mathrsfs}
\usepackage{xypic}
\usepackage{bm}
\usepackage{url}
\usepackage{hyperref}
\usepackage[latin1]{inputenc}
\usepackage{enumitem}
\usepackage{float}
\allowdisplaybreaks[4]
\makeatletter \@addtoreset{equation}{section}

\newtheorem{thm}{Theorem}[section]
\newtheorem{corr}[thm]{Corollary}
\newtheorem{lem}[thm]{Lemma}
\newtheorem{prop}[thm]{Proposition}

\theoremstyle{definition}
\newtheorem{defn}{Definition}[section]

\theoremstyle{remark}
\newtheorem{rem}{Remark}[section]
\numberwithin{equation}{section}
\graphicspath{{./f-FMCF-2/}}

\begin{document}
\begin{sloppypar}
\title{{Existence of the classical solution to the fractional mean
curvature flow with  capillary-type boundary conditions}
\footnotetext{{Keywords: Schauder estimates; fractional mean
curvature flow; general capillary-type boundary condition; fractional parabolic Laplacian }}}

\author{\small  \it Linlin Fan, Peibiao Zhao%\footnote{Corresponding author: Peibiao Zhao}
\\
%\small \it School of Mathematics and Statistics, Nanjing University
%of Science and Technology,\\ \small \it Nanjing 210094, Jiangsu Province, P. R. China \\
%\small \it Email: fanlinlin0117@foxmail.com;
%pbzhao@njust.edu.cn\\
 }

\date{}
\maketitle

\begin{abstract}
Wang, Weng and Xia[Math. Ann. 388 (2024), no. 2] studied a mean curvature type flow for the smooth, embedded capillary hypersurfaces in the half-space with capillary boundary and confirmed the existence of solutions by the standard PDE theory. In the present paper, we study a fractional mean curvature flow for $C^{1,1}$-regular hypersurfaces in the half-space with a  capillary-type boundary  and obtain the short time existence by the fixed point argument.
\end{abstract}
\section{Introduction}
\ \ \ \ The study of  the fractional (or nonlocal) problem has been an active field over the past ten years (see in \cite{CWNH21,li2021zhangsub} and the reference therein). The fractional (or nonlocal) problem originates from many practical items in the field of applied sciences. The introduction of nonlocal geometric concepts is mainly to model physical systems with nonlocal interactions, such as addressing long-range interactions in phase transitions that the classical perimeter cannot characterize, nonlocal diffusion in image processing, and nonlocal energy of cell membranes in biology. We use fractional mathematical tools to study and characterize nonlocal geometric evolution processes. In modern mathematics, the first rigorously defined fractional geometric quantity is the fractional perimeter. In particular, a notion of fractional perimeter has been introduced in \cite{CRS10,CSV19}.

The fractional mean curvature was first defined by Caffarelli, Roquejoffre and Savin in \cite{CRS10}, where $H^{s}_{E_{t}}$ is the fractional mean curvature of hypersurface $E_{t}$ and $H^{s}_{E_{t}}$ can be denoted as \eqref{712-1} in the principal valued sense
\begin{equation}\label{712-1}
H^{s}_{E_{t}}=\int_{\mathbb{R}^{n+1}\setminus E_{t}}\frac{dy}{|y-x|^{n+1+s}}-\int_{E_{t}}\frac{dy}{|y-x|^{n+1+s}}.
\end{equation}
It arises naturally when performing the first variation of the fractional perimeter. Minimizers of the fractional perimeter are usually called nonlocal minimal sets, and their boundaries nonlocal minimal surfaces\cite{NPCMS}. Fractional perimeter and mean curvature have also found applications in other contexts, such as image reconstruction and nonlocal capillarity models(\cite{BS15,G1984,MV17}).

The mean curvature flow refers to a motion of embedded hypersurfaces or submanifolds in a Riemannian manifold along their normal directions. The velocity term of this motion is a smooth function defined on the moving hypersurface or submanifold, which is often related to the mean curvature of the moving hypersurface or submanifold and other geometric quantities. The velocity term of the mean curvature flow can take different expressions, which also leads to completely different geometric phenomena corresponding to it, one can see \cite{H1984,T1984,H1987,A2023,ACM2023,MCMO1996,MC2011,refs1996,tsot,eoch,mctf2021} for details.

The study on  fractional mean curvature flows is relatively late and few research results have been achieved. The definition of the fractional mean curvature flow was first introduced by Imbert \cite{IC}. Starting from two major application backgrounds-dislocation dynamics and the phase-field theory of fractional reaction-diffusion equations, Imbert proposed a level set formulation suitable for singular interaction potentials of nonlocal geometric flows defined by singular integral operators. Imbert in \cite{IC} clarified the nonlocal measurement role of singular integral operators in interface curvature, proved key properties such as stability and the comparison principle by defining viscosity solutions, and established the unique solution of the equation, and revealed the convergence of the fractional mean curvature flow to the classical mean curvature flow under specific limits. Meanwhile, from the perspective of generalized flows, Imbert in \cite{IC}  provided an equivalent definition of geometric flows and verified the consistency and validity of the level set method. The fractional mean curvature flow can also be interpreted as a fractional analog of the classical mean curvature flow\cite{AA2014,SROA,SRF,SOTA2022,AOT2023,JM2023}. In fact, the fractional mean curvature flow is the $L^{2}$-gradient of the fractional perimeter.

Multiple authors have established the existence and uniqueness of weak solutions to the fractional mean curvature flow equation in the viscosity sense using different methods. Specifically, in \cite{CS10}, Caffarelli and Souganidis proved that a threshold dynamics scheme converges to the motion driven by fractional mean curvature. In \cite{SROA}, Chambolle, Novaga, and Ruffini extended the results in \cite{CS10} to the anisotropic case. Chambolle, Novaga, and Ruffini showed the consistency of a threshold dynamics type algorithm for the anisotropic motion by fractional mean curvature, in the presence of a time dependent forcing term. Beside the consistency result, Chambolle, Novaga, and Ruffini showed that convex sets remain convex during the evolution, and the evolution of a bounded convex set is uniquely defined.
%and the case with external driving forces, proved that convexity can be preserved under this condition, and showed that the corollary thereof is that the corresponding limiting geometric evolution also preserves convexity.

%The capillary problem was systematically solved by Thomas Young and Laplace in the early 19th century.
The capillary problem has been studied for more than forty years. It aims to explain the rising or falling behavior of liquids in narrow channels that violates the simple law of gravity, provides a solid physical foundation for natural phenomena such as soil water movement and plant water uptake, and while also improving the theory of fluid mechanics.  It was Young \cite{YT1805} who first considered capillary surfaces mathematically in 1805 and introduced the mathematical concept of mean curvatures of a surface. His work was followed by Laplace and later by Gauss. In \cite{MV17}, Maggi and Valdinoci explored the possibility of modifying the classical Gauss free energy functional used in the capillarity theory by considering nonlocal-type surface tension energies. The corresponding variational principles lead to new equilibrium conditions which are compared to the mean curvature equation and Young's law found in classical capillarity theory. As a special case of this family of problems they recovered a nonlocal relative isoperimetric problem of geometric interest. The study on the capillary problem related to mean curvatures, one can refer to \cite{Wang2020Weng,HuWeiYangZhou2023,A2023,ACM2023,Wang2022Xia} for details.

By adopting the idea of nonlocal interactions,  Maggi and Valdinoci in \cite{MV17} reconstructed the energy model of the capillary phenomena, derived new equilibrium laws, generalized the classical theory, and combined it with practical observations. This constitutes the foundational work of the static nonlocal capillary energy theory, with its core lying in the construction of nonlocal energy functionals. This paper is an extended study on the dynamic fractional mean curvature flow.
%Its highlight is that, based on the nonlocal geometric concepts in \cite{MV17}, It focuses on more physically meaningful capillary boundary conditions, establishes for the first time the classical solution theory of such flows through precise analytical methods, and achieves a leap from static theory to classical solutions of dynamic evolution. Moreover, in terms of methodology, it places greater emphasis on the in-depth integration of geometric flows and partial differential equations, and its results fill the theoretical gap of fractional mean curvature flows under capillary boundaries.

In the case of the classical mean curvature flow and the mean curvature equation with general capillary-type boundary conditions,  Wang, Wei and Xu in \cite{WWX19} showed that when $\Omega$ is a strictly convex, bounded $C^{3}$ domain in $\mathbb{R}^{n}(n\geq2)$, there exists a $C^{2,\sigma}(\bar{\Omega}\times[0,+\infty))$ solution, $\sigma\in(0,1)$. Then a natural question arises: if the regularity of $\Omega$ is not enough, for instance, it is only $C^{1,1}$, how should we solve this problem? In \cite{VD20}, Julin and La Manna established the short time existence
of the classical solution to the fractional mean curvature flow under the assumption that the initial set is $C^{1,1}$-regular, or $C^{1+s+\alpha}$ close to a $C^{1,1}$-regular set.

Motivated by the foregoing works, the present  paper addresses the problem precisely from a nonlocal perspective, we construct Schauder estimates on fractional mean curvature flows and the mean curvature equation with general capillary-type boundary conditions.

Our main concern in this paper is to study the motion of a set $E_{0}\subset\mathbb{R}^{n+1}$ following a fractional mean curvature flow from the classical point of view with general capillary-type boundary condition. More precisely, let $M$ be a compact orientable smooth $n$-dimensional manifold. For a fixed $s\in(0,1)$, suppose $\iota_{0}:=M\times\{0\}\rightarrow\overline{\mathbb{R}}^{n+1}_{+}$ is a smooth initial embedding such that $\iota_{0}(M)$ is a star-shaped hypersurface in $\ \overline{\mathbb{R}}^{n+1}_{+}$ and intersects with $\partial\mathbb{R}^{n+1}_{+}$ at a constant contact angle $\theta\in(0,\pi)$. Consider a family of embeddings $\iota: M\times[0,T)\rightarrow\overline{\mathbb{R}}^{n+1}_{+}$ such that
\begin{equation}\label{1}
\left\{
\begin{array}{ll}
(\partial_{t}\iota)^{\bot}=-H^{s}\nu,
&in\ M\times[0,T),\\
\langle\nu,\bar{N}\circ \iota\rangle=-\cos\theta, &on\ \partial M\times[0,T),\\
\iota(\cdot,0)=\iota_{0}(\cdot), &in\ M.
\end{array}
\right.
\end{equation}
where $\nu$ and $H^{s}$ are the unit normal vector and the fractional mean curvature of hypersurface $\iota(\cdot,t)$ resp., $\bar{N}$ is the unit outward normal vector field of $\partial\mathbb{R}^{n+1}_{+}$.
%we call such hypersurface $E$ as the capillary boundary hypersurface.

%If a set $E$ has minimal local perimeter in a bounded set $\Omega$, then it has zero mean curvature at each point of $\partial E\cap\Omega$\cite{G1984}, and the equation that says that the curvature is equal to zero is the Euler-Lagrange equation associated to the minimization of the perimeter of a set. For fractional mean curvature, in \cite{NPCMS}, they said that $\partial E$ is a $J$-minimal surface in a bounded open set $\Omega$ if the set
%$\partial E\cap\Omega$ satisfies the nonlocal minimal surface equation $H^{J}_{\partial E}=0$ for all $x\in\partial E\cap\Omega$ such that
%\[|E\cap B_{\delta}(x)|>0\ and\ |(\mathbb{R}^{n}\setminus E)\cap B_{\delta}(x)|>0.\]
%Thus, the expression of the fractional mean curvature is derived.

We are interested in the classical solution of \eqref{1}, we prove the short time existence of the classical solution under the assumption that the initial set is $C^{1,1}$-regular.
\begin{thm}\label{1028-1}
If the initial hypersurface is a $C^{1,1}$-regular, star-shaped hypersurface, with a capillary boundary and the constant contact angle $\theta\in(0,\pi)$, then the flow \eqref{1} becomes instantaneously smooth, i.e., each surface
$\iota(\cdot,t)$ with $t\in(0,T]$ is $C^{\infty}$-hypersurface.
\end{thm}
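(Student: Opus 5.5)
We follow the strategy of Julin and La Manna \cite{VD20}, adapted to the half-space with the capillary boundary condition. Since the initial hypersurface is star-shaped, we write each $\iota(\cdot,t)$ as a radial graph over a fixed spherical cap $\mathcal{C}_\theta\subset\overline{\mathbb{R}}^{n+1}_+$. This cap meets $\partial\mathbb{R}^{n+1}_+$ at angle $\theta$, so the reference surface already satisfies the contact condition. We set $\iota(x,t)=e^{u(x,t)}x$ for $x\in\mathcal{C}_\theta$. The normal-velocity equation in \eqref{1} then becomes a scalar nonlocal parabolic equation
\[
\partial_t u = -\sqrt{1+|\nabla u|^2}\,e^{-u}H^s[u],
\]
coupled with an oblique (Neumann-type) condition $\langle \nabla u,\mu\rangle=0$ on $\partial\mathcal{C}_\theta$. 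Here $\mu$ is the conormal, and in the capillary setting the oblique condition reduces to this because of the choice of cap, as in Wang--Weng--Xia. Next we linearize $H^s[u]$ around the $C^{1,1}$ initial datum $u_0$. In local graph coordinates the principal part of $H^s$ is a fractional operator of order $1+s$, namely $c\,(-\Delta)^{\frac{1+s}{2}}u$ with variable coefficients depending on $\nabla u$. The remainder is a lower-order term that is $C^{\alpha}$ whenever $u\in C^{1+s+\alpha}$ and is controlled by the $C^{1,1}$ norm.

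The core analytic step is a Schauder theory for the linear problem $\partial_t v + L v = f$ with the oblique boundary condition and $v(0)=v_0$. Here $L$ is the order-$(1+s)$ nonlocal operator with $C^\alpha$ coefficients, acting on functions over the cap. Taking $f\in C^{\alpha,\alpha/(1+s)}$, we want
\[
\|v\|_{C^{1+s+\alpha,\,(1+s+\alpha)/(1+s)}}\le C\big(\|f\|_{C^{\alpha,\alpha/(1+s)}}+\|v_0\|_{C^{1+s+\alpha}}\big).
\]
We also need a weighted version for initial data that are only $C^{1,1}$ (so $1+s+\alpha<2$ is allowed if $s+\alpha<1$), with the norm degenerating like $t^{-\gamma}$ at $t=0$. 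To obtain the estimate we freeze coefficients, flatten the boundary, and treat the half-space model by even reflection across the boundary: the Neumann condition makes the reflected function solve a whole-space fractional heat equation for the fractional parabolic Laplacian. We then patch the local estimates with a partition of unity. The nonlocal tails only contribute lower-order terms, which are absorbed for small $T$.

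With these linear estimates in hand, we run a fixed point argument. For $T$ small we define the map $w\mapsto v$, where $v$ solves the linear problem with coefficients frozen at $w$ and the nonlinear remainder evaluated at $w$. On a closed ball of the weighted space this map is a contraction, using Lipschitz bounds for the nonlinear remainder of $H^s$ in $C^{1+s+\alpha}$, which are \cite{VD20}-type estimates on the fractional mean curvature of graphs. This gives a unique short-time solution. For the instantaneous smoothing, we bootstrap on $[\tau,T]$. Difference quotients in tangential directions and in time satisfy linear equations of the same type, so Schauder repeatedly raises the regularity by $s+\alpha$-type increments, giving $C^\infty$ for $t>0$. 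Near the boundary we use tangential derivatives together with the equation to recover normal derivatives.

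We expect the main obstacle to be the boundary Schauder estimate for the nonlocal operator with the capillary condition. The fractional mean curvature is computed over the whole $\mathbb{R}^{n+1}$ (or over the half-space), so it is not naturally compatible with reflection. To get it we would first try to show that $H^s$ of the reflected set differs from $H^s$ of the original by a smooth lower-order boundary term. The oblique condition would then reduce, up to such perturbations, to the reflection-symmetric model. If that works, the remaining steps follow the scheme of \cite{VD20}.
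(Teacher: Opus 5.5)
Your skeleton matches the paper's: a radial parametrization, a linear Schauder estimate built on even reflection, a contraction for small $T$, and the bootstrap of \cite{VD20}. The paper, however, parametrizes over $\mathbb{S}^n_+$ and keeps the nonlinear condition $\partial_\eta\rho=\cos\theta\sqrt{\rho^2+|\nabla_\tau\rho|^2}$. It splits $u=u_1+u_2$ so that reflection is only applied to a homogeneous Neumann problem.

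\textbf{The Neumann reduction is false.} For $n\ge 2$ and $\theta\neq\pi/2$, your claim that the cap $\mathcal{C}_\theta$ reduces the contact condition to $\partial_\mu u=0$ does not hold. With $\varphi=\log\rho$, the paper's condition is equivalent to $\partial_\eta\varphi=\cot\theta\,\sqrt{1+|\nabla^{\partial}\varphi|^{2}}$, where $\nabla^{\partial}$ is the gradient along $\partial\mathbb{S}^n_+$. Writing the same surface as $e^{u}x$ over $\mathcal{C}_\theta$ only absorbs the constant $\cot\theta$. By rotational symmetry $\partial_\eta\varphi=a_\theta\,\partial_\mu u+\cot\theta$ with $a_\theta>0$. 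With $\nabla^\partial$ now the gradient along $\partial\mathcal{C}_\theta$, also $|\nabla^{\partial}\varphi|=\sin\theta\,|\nabla^{\partial}u|$. The exact condition is therefore $a_\theta\,\partial_\mu u=\cot\theta\bigl(\sqrt{1+\sin^{2}\theta\,|\nabla^{\partial}u|^{2}}-1\bigr)$. Equivalently, at a boundary point where $\partial_\mu u=0$ but $u$ varies along $\partial\mathcal{C}_\theta$, the actual contact angle $\theta'$ satisfies $\cos\theta'=\cos\theta/\sqrt{1+\sin^{4}\theta\,|\nabla^{\partial}u|^{2}}\neq\cos\theta$. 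So the Neumann datum is nonzero and nonlinear, and the even reflection of $u$ is not even $C^1$ across the equator.

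\textbf{The boundary Schauder step cannot be rescued by reflection.} This is the step you flag yourself, and the hoped-for fix is false. Let $E$ be the region between $\Sigma$ and $\partial\mathbb{R}^{n+1}_+$; its curvature \eqref{712-1} drives \eqref{1}. Extend $\Sigma$ across the hyperplane to the boundary of a bounded set $\hat E$ of the same regularity, with $E=\hat E\cap\{y_{n+1}\ge 0\}$; $C^{1+s+\alpha}$ already makes $H^s_{\hat E}$ bounded. Then for $x\in\Sigma$,
\[H^s_E(x)=H^s_{\hat E}(x)+2\int_{\hat E\cap\{y_{n+1}<0\}}|y-x|^{-n-1-s}\,dy.\]
The last term is comparable to $x_{n+1}^{-s}$, because near $\partial\Sigma$ the set $\hat E\cap\{y_{n+1}<0\}$ contains a wedge of opening close to $\pi-\theta>0$.
\begin{itemize}
\item Hence the normal velocity in \eqref{1} is unbounded at the contact line for every $\theta\in(0,\pi)$. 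No solution can be $C^{1+s+\alpha}$ up to the boundary with bounded $\partial_t u$, which is exactly the class your fixed point uses.
\item Reflection does not repair this. The difference $H^s_{E\cup E^\star}(x)-H^s_E(x)=-2\int_{E^\star}|y-x|^{-n-1-s}\,dy$ is again of order $x_{n+1}^{-s}$, not a smooth lower-order term. The two singular terms cancel only for $\theta=\pi/2$, with $H^s$ taken for the $C^{1,1}$ set $E\cup E^\star$. There \cite{VD20} plus uniqueness does work.
\item The linear model has the same mismatch. For the regional operator on $\mathbb{S}^n_+$ used in the paper, the even extension gives $\Delta^{\frac{1+s}{2}}_{\mathbb{S}^n}\tilde U-\Delta^{\frac{1+s}{2}}_{\mathbb{S}^n_+}U=2\int_{\mathbb{S}^n_+}(U(z)-U(x))|x-z^\star|^{-n-1-s}d\mathcal{H}^n_z$. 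This term has full order $1+s$, so small $T$ cannot absorb it. Once $\partial_\eta U\neq 0$ it blows up like $\mathrm{dist}^{-s}$; in one dimension, $\mathrm{PV}\!\int_0^\infty (y-x)|y-x|^{-2-s}dy=x^{-s}/s$.
\end{itemize}

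What is missing is a formulation whose edge singularity cancels. One example is the first variation of the Maggi--Valdinoci energy \cite{MV17}, where the angle is fixed by a nonlocal Young law rather than prescribed. A genuine boundary Schauder theory for that operator would then still be needed.

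You will not find this in the paper. Its Theorem~\ref{812-1} simply asserts that the reflected $\tilde U$ solves the equation on all of $\mathbb{S}^n$. Its derivation of \eqref{301} applies Lemma~\ref{1120-1} only over the curved part of $\partial E_t$.
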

\begin{rem}
The $C^{1,1}$ regularity is the optimal condition for ensuring that the $H^{s}$ of the hypersurface is bounded. If the regularity is insufficient, then the $H^{s}$ defined by integration does not exist.
\end{rem}

Let us summarize the proof of Theorem \ref{1028-1}. The proof of the main theorem is based on Schauder estimates on parabolic equations. As in \cite{VD20} we first parametrize the flow \eqref{1} by using the radial function over upper hemisphere, then \eqref{1} is equivalent to the following equation
\begin{equation*}
\left\{
\begin{array}{ll}
\partial_{t}\rho(x,t)=A(x,\rho,\nabla_{\tau}\rho)(\Delta^{\frac{1+s}{2}}\rho(x,t)-H_{\mathbb{S}_{+}^{n}}^{s}\\
\ \ \ +R_{1,\rho}(x)+R_{2,\rho}(x)(\rho(x,t)-1))
 &on\ \mathbb{S}_{+}^{n}\times[0,T)\\
\frac{\partial\rho(x,t)}{\partial\eta}=\cos\theta\sqrt{\rho^{2}(x,t)+|\nabla_{\tau}\rho(x,t)|^{2}}, &on\ \partial\mathbb{S}_{+}^{n}\times[0,T),\\
\end{array}
\right.
\end{equation*}
where $A(x,\rho,\nabla_{\tau}\rho):=\frac{\sqrt{\rho^{2}(x,t)+|\nabla_{\tau}\rho(x,t)|^{2}}}{\rho(x,t)}$, $\Delta^{\frac{1+s}{2}}$ denotes the fractional Laplacian on $\mathbb{S}_{+}^{n}$ and $R_{1,\rho}(x),\ R_{2,\rho}(x)$ are nonlinear terms, and we will elaborate on them in section 3.

Following the idea in \cite{VD20,ES92}, we show that the nonlinear terms $R_{1,\rho}(x)$ and $R_{2,\rho}(x)$ are controllable, then we use Schauder estimates and a standard fixed point argument to obtain the existence of a solution which is $C^{1+s+\alpha}$-regular in space. The main difficulty in our analysis is due to the complicated structure of the nonlinear term and the general capillary type boundary condition, which makes it challenging to prove Schauder theory for the fractional heat equation on $\overline{\mathbb{S}_{+}^{n}}$. The main contribution of this paper is to prove this regularity step. We then recall that the authors in \cite{VD20} show that $C^{k+s+\alpha}$-regular can be proven using differentiate the equation multiple times and induction.
%if we would be able to prove the existence of a solution which is $C^{1+s+\alpha}$-regular in space,

Throughout the paper, $C$ will be positive constants which can be different from line to line and only the relevant dependence is specified.

\section{Notation and preliminary results}
\ \ \ \ To illustrate the main results of this paper, we start by presenting the notations that will be used in the following sections. We use $``\cdot"$ for the standard inner product of two vectors in $\mathbb{R}^{n+1}$.

Since $\mathbb{S}^{n}$ is embedded in $\mathbb{R}^{n+1}$, it has a metric $g$ induced by the Euclidian metric, meanwhile, the induced metric $g$ is the standard spherical metric. $(\mathbb{S}^{n},g)$ is a Riemannian manifold. The smooth vector field on $\mathbb{S}^{n}$ is denoted by $\mathcal{T}(\mathbb{S}^{n})$, for $X\in\mathcal{T}(\mathbb{S}^{n})$, $Xu$ means the derivation of $u\in C^{\infty}$ in the direction of $X$. The Riemannian connection on $\mathbb{S}^{n}$ is denoted by $D$. For $u\in C^{\infty}$, its covariant derivative $Du$ is a 1-tensor field as
\[Du(X)=D_{X}u=Xu,\]
which is equal to the directional derivative $Xu$. The kth order covariant derivative of $u\in C^{\infty}$ is denoted by $D^{k}u=D(D^{k-1}u)$, which is a k-tensor field. More precisely, assume that $X_{1},\cdots,X_{k}\in\mathcal{T}(\mathbb{S}^{n})$ be vector fields on $\mathbb{S}^{n}$, then $D^{k}u(X_{1},\cdots,X_{k})$ can be denoted by
\[D_{X_{k}}\cdots D_{X_{1}}u=D^{k}u(X_{1},\cdots,X_{k}).\]
When $k=2$, $D^{2}u(X,Y)=D^{2}u(Y,X)$ for every vector fields $X$ and $Y$, but $D^{k}u(k>3)$ is not. Assume that $X_{1},\cdots,X_{k}\in\mathcal{T}(\mathbb{S}^{n})$ with $\|X_{i}\|_{C^{k+2}(\mathbb{S}^{n})}\leq1,\ i=1,\cdots,k$, it holds,
\begin{equation}\label{719-1}
D^{k}u(X_{1},\cdots,X_{i},\cdots,X_{j},\cdots,X_{k})=D^{k}u(X_{1},\cdots,X_{j},\cdots,X_{i},\cdots,X_{k})+\partial^{k-1}u,
\end{equation}
where $\partial^{k-1}u$ is just a notation which satisfies
\begin{equation}\label{719-2}
\|\partial^{k-1}u\|_{C^{\gamma}(\mathbb{S}^{n})}\leq C_{k,\gamma}
\|u\|_{C^{k-1+\gamma}(\mathbb{S}^{n})},\ \gamma\in(0,2).
\end{equation}
On the other hand,
\begin{equation}\label{721-3}
D_{X_{k}}\cdots D_{X_{1}}u=X_{k},\cdots,X_{1}u+\partial^{k-1}u
\end{equation}
where $\partial^{k-1}u$ satisfies \eqref{719-2} just like the $\partial^{k-1}u$ in \eqref{719-1}. For any $\gamma\in(0,2)$ it holds
\begin{align}\label{721-1}
\sup\{\|X_{k},\cdots,X_{1}u\|_{C^{\gamma}(\mathbb{S}^{n})}&:X_{i}\in\mathcal{T}(\mathbb{S}^{n}),\
\|X_{i}\|_{C^{k+2}(\mathbb{S}^{n})}\leq1,\ i=1,\cdots,k\}\notag\\
&\geq\frac{1}{C_{k}}\|u\|_{C^{k+\gamma}(\mathbb{S}^{n})}-C_{k}\|u\|_{C^{k-1+\gamma}(\mathbb{S}^{n})}.
\end{align}

We can extend any continuously differentiable map $F:\mathbb{S}^{n}\rightarrow\mathbb{R}^{k}$ to $\tilde{F}:\mathbb{R}^{n+1}\rightarrow\mathbb{R}^{k}$ such that $\tilde{F}=F$ on $\mathbb{S}^{n}$. We define the tangential differential of $F$ at $x\in\mathbb{S}^{n}$ by
\[\nabla_{\tau}F(x):=\nabla_{\tau}\tilde{F}(x)(I-x\otimes x)\]
where $\nabla_{\tau}F(x)$ does not depend on the chosen extension. For a function $u:\mathbb{S}^{n}\rightarrow\mathbb{R}$, we denote $\nabla_{\tau}u$ as the tangential gradient of $u$.

For $u\in C(\mathbb{S}^{n})$, we define the $\alpha$-H$\ddot{o}$lder norm in a standard way,
\[\|u\|_{C^{0}(\mathbb{S}^{n})}:=\underset{x\in\mathbb{S}^{n}}\sup|u(x)|\ and\ \|u\|_{C^{\alpha}(\mathbb{S}^{n})}:=\underset{x\neq y\in\mathbb{S}^{n}}\sup\frac{|u(y)-u(x)|}{|y-x|^{\alpha}}+\|u\|_{C^{0}(\mathbb{S}^{n})} \]
for $\alpha\in(0,1)$
%and for $C^{\alpha}$-norm,
\[\|u\|_{C^{k+\alpha}(\mathbb{S}^{n})}:=\sum_{l=0}^{k}\|\nabla^{l}u\|_{C^{\alpha}(\mathbb{S}^{n})}\]
where $k\in\mathbb{N}$.

The following is the interpolation inequality from \cite{JM2023}. For the convenience of reference, we write it here.
\begin{lem}(\cite{JM2023})\label{825-1}
Assume $s_{1}$ and $s_{2}$ are positive numbers, $\theta\in(0,1)$ and denote
\[s=\theta s_{1}+(1-\theta)s_{2}.\]
Then there is a constant $C\geq1$ such that for every smooth function $u:\mathbb{S}^{n}\rightarrow\mathbb{R}$ it holds
\[\|u\|_{C^{s}(\mathbb{S}^{n})}\leq C\|u\|_{C^{s_{1}}(\mathbb{S}^{n})}^{\theta}\|u\|_{C^{s_{2}}(\mathbb{S}^{n})}^{1-\theta}.\]
\end{lem}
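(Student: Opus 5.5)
We prove the interpolation inequality of Lemma \ref{825-1} by reducing it to the classical Euclidean interpolation of H\"older norms through a finite atlas, and then treating integer and non-integer exponents by one and the same scheme. Without loss of generality we assume $s_{1}<s_{2}$, so that $s_{1}<s<s_{2}$.

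\textbf{Localization.} Cover $\mathbb{S}^{n}$ by finitely many coordinate charts $\varphi_{j}:B_{2}\subset\mathbb{R}^{n}\to\mathbb{S}^{n}$, for instance stereographic or graph charts, with smooth bounded distortion. Choose a smooth partition of unity $\{\chi_{j}\}$ subordinate to the sets $\varphi_{j}(B_{1})$. For any $\gamma>0$ the norm $\|u\|_{C^{\gamma}(\mathbb{S}^{n})}$, defined through tangential derivatives as in the definition above, is then equivalent to $\max_{j}\|u\circ\varphi_{j}\|_{C^{\gamma}(B_{1})}$. The constants depend only on $n$, $\gamma$ and the atlas. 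This equivalence follows from the chain rule, the relation \eqref{721-3} between covariant and coordinate derivatives, and the comparability of geodesic, chordal and coordinate distances on each chart. The interpolation estimate is therefore reduced to functions $v=u\circ\varphi_{j}$ on $B_{1}$, where the inequality on a ball needs no cutoff.

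\textbf{Euclidean interpolation.} On $B_{1}$ (or on the slightly larger ball $B_{3/2}$) we prove
\[
\|v\|_{C^{s}}\leq C\|v\|_{C^{s_{1}}}^{\theta}\|v\|_{C^{s_{2}}}^{1-\theta}.
\]
The tool is the elementary estimate that, at scale $h\in(0,1]$, every difference quotient of order $s$ is bounded by $h^{s_{1}-s}\|v\|_{C^{s_{1}}}+h^{s_{2}-s}\|v\|_{C^{s_{2}}}$. To obtain it, write $s=k+\beta$. If two points lie at distance $|x-y|\geq h$, estimate $|\nabla^{k}v(x)-\nabla^{k}v(y)|$ using the $C^{s_{1}}$ norm. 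If $|x-y|<h$, use the $C^{s_{2}}$ norm, applying the Taylor expansion or the mean value theorem when $s_{2}$ exceeds $k+1$. Intermediate derivatives $\nabla^{l}v$ with $s_{1}<l<s_{2}$ are controlled on a ball by the standard Landau--Kolmogorov type bound
\[
\|\nabla^{l}v\|_{C^{0}}\leq C\bigl(h^{s_{1}-l}\|v\|_{C^{s_{1}}}+h^{s_{2}-l}\|v\|_{C^{s_{2}}}\bigr).
\]
This bound is proved by writing $\nabla^{l}v$ through finite differences of step $h$.

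\textbf{Optimization.} Choose $h=\bigl(\|v\|_{C^{s_{1}}}/\|v\|_{C^{s_{2}}}\bigr)^{1/(s_{2}-s_{1})}$. This $h$ is at most $1$, because $\|\cdot\|_{C^{s_{1}}}\lesssim\|\cdot\|_{C^{s_{2}}}$ on a bounded domain. With this choice both terms become $\|v\|_{C^{s_{1}}}^{\theta}\|v\|_{C^{s_{2}}}^{1-\theta}$, since $\theta=(s_{2}-s)/(s_{2}-s_{1})$. The lower order parts of $\|v\|_{C^{s}}$ are handled the same way. Summing over the finitely many charts gives the estimate on $\mathbb{S}^{n}$. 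The constant satisfies $C\geq1$ automatically, because the norms are comparable.

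\textbf{Expected obstacle.} The main difficulty is bookkeeping rather than anything conceptual. Three points need care: exponents straddling integers, for example $s_{1}<k<s<k+1<s_{2}$; the case $s$ equal to an integer, where the norm is $C^{k}$ with $\alpha=0$ in the paper's convention; and the requirement that chart changes cost only lower order terms of size $\|u\|_{C^{k-1+\gamma}}$, as stated in \eqref{719-2}. We would first prove the two-step case in which $s_{1},s,s_{2}$ all lie in one interval $[k,k+1]$. We would then obtain the general case by iterating this bound at neighbouring integers, using the log-convexity of $\gamma\mapsto\|u\|_{C^{\gamma}}$ up to constants, which is the content of the lemma.
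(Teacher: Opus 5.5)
The paper gives no proof of Lemma~\ref{825-1}; it is quoted from \cite{JM2023}, so there is no in-paper argument to compare with. Your main line is the standard proof, and it works for all $0<s_1<s_2$, including integer $s$. You restrict $u$ to finitely many charts (no cutoff is needed), show that for every $h\in(0,h_0]$ one has $\|v\|_{C^{s}}\le C\bigl(h^{s_1-s}\|v\|_{C^{s_1}}+h^{s_2-s}\|v\|_{C^{s_2}}\bigr)$, and optimize in $h$.

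All of the content, however, sits in the Landau--Kolmogorov bound for $\nabla^{l}v$ with $l$ an integer in $(s_1,s_2)$. You need it more often than your sketch says. It is needed for pairs with $|x-y|\ge h$ whenever $s_1<k$, because the $C^{s_1}$ norm does not control $\nabla^{k}v$ at all. It is also needed for pairs with $|x-y|<h$ whenever $s_2>k+1$, because the mean value theorem produces $\nabla^{k+1}v$, which is again intermediate.

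A single $j$-th order difference of $\nabla^{m}v$, with $m=\lfloor s_1\rfloor$ and $j=l-m$, closes the bound only when $s_2\le l+1$; for smaller $l$ its error term contains $\nabla^{l+1}v$. So run a downward induction in $l$, starting from the largest integer below $s_2$. Inserting the bound for $\nabla^{l+1}v$ at the same scale $h$ produces exactly the right powers. Keep the stencil inside $B_{3/2}$ by taking $h\le h_0$, and recover mixed derivatives by polarization.

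Finally, the optimal $h^{*}=(\|v\|_{C^{s_1}}/\|v\|_{C^{s_2}})^{1/(s_2-s_1)}$ need not be admissible. Norm comparability only gives $\|v\|_{C^{s_1}}\le C_0\|v\|_{C^{s_2}}$ with $C_0>1$ in general; for instance, on a set of diameter $2$ one only has $[v]_{1/2}\le 2^{1/4}[v]_{3/4}$. When $h^{*}>h_0$, use instead $\|v\|_{C^{s}}\le C\|v\|_{C^{s_2}}\le Ch_0^{-\theta(s_2-s_1)}\|v\|_{C^{s_1}}^{\theta}\|v\|_{C^{s_2}}^{1-\theta}$.

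The organisation you announce in your last paragraph, on the other hand, does not work and should be dropped. Knowing the inequality only for triples $s_1<s<s_2$ inside a single interval $[k,k+1]$ says nothing across an integer. A function can be convex, and even nondecreasing, on $[k-1,k]$ and on $[k,k+1]$ and still have a concave corner at $k$. So triples such as $(\tfrac12,1,\tfrac32)$, or $s_1<k<s<k+1<s_2$, cannot be reached by chaining. Any chain through integer endpoints immediately needs straddling estimates again. For example, with $s_1=\tfrac12$ and $s_2=\tfrac52$, bounding $\|u\|_{C^{5/4}}$ by $\|u\|_{C^{1}}$ and $\|u\|_{C^{2}}$ leaves you needing interpolation bounds for $\|u\|_{C^{1}}$ and $\|u\|_{C^{2}}$ between $C^{1/2}$ and $C^{5/2}$, which straddle integers.

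Invoking the log-convexity of $\gamma\mapsto\|u\|_{C^{\gamma}}$ to bridge this is circular, as you note yourself: it is the statement being proved. The estimate across integers is exactly the intermediate-derivative bound above. Once you have it for every integer in $(s_1,s_2)$, the scale-$h$ argument treats all configurations at once and no iteration is needed.
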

We also need the divergence theorem mentioned in \cite{VD20} as well as its generalized form. Specifically, assume that $\Sigma$ is bounded and uniformly $C^{1,1}$-regular surface, the divergence of a smooth vector field $X$ by $div X$ and the divergence theorem states
\[\int_{\Sigma}div Xd\mathcal{H}^{n}=0.\]
The definition of divergence to vector valued functions $\tilde{X}\in C^{\infty}(\Sigma,\mathbb{R}^{n+1})$ by $div\tilde{X}:=Trace(\nabla_{\tau}\tilde{X})$. The divergence theorem can be generalized to
\[\int_{\Sigma}div\tilde{X}d\mathcal{H}^{n}=\int_{\Sigma}H_{\Sigma}\tilde{X}\cdot\nu d\mathcal{H}^{n}\]
where $H_{\Sigma}$ is the mean curvature of $\Sigma$.

The derivative of function
\[\psi(x)=\int_{\mathbb{S}^{n}}G(y,x)d\mathcal{H}_{y}^{n}\]
with respect to a vector field $X\in\mathcal{T}(\mathbb{S}^{n})$ can be expressed as
\begin{equation}
D_{X}\psi(x)=\int_{\mathbb{S}^{n}}D_{X(x)}G(y,x)d\mathcal{H}_{y}^{n},
\end{equation}
where $D_{X(x)}G(y,x)$ denotes the derivative of $G(y,\cdot)$ with respect to $X$. Moreover,
\begin{equation}\label{719-3}
div_{y}(G(y,x)X(y))=D_{X(y)}G(y,x)+G(y,x)div(X)(y).
\end{equation}
Combining \eqref{719-3} with the divergence theorem, we can obtain that
\begin{equation}\label{622-1}
D_{X}\psi(x)=\int_{\mathbb{S}^{n}}(D_{X(y)}+D_{X(x)})G(y,x)d\mathcal{H}_{y}^{n}+\int_{\mathbb{S}^{n}}G(y,x)div(X)(y)d\mathcal{H}_{y}^{n}.
\end{equation}
\section{Parametrization of the flow \eqref{1}}
\ \ \ \ For notational simplicity, we denote by $E_{t}$ the region enclosed by the evolution hypersurface and the hyperplane, and denote the evolution hypersurface itself by $\partial E_{t}$. We remark that by these results we may parametrize the flow via the radial function over the unit upper sphere $\mathbb{S}^{n}_{+}$, which means
that for every $t\in[0,T)$ there is a function $\rho(\cdot,t):\mathbb{S}^{n}_{+}\rightarrow\mathbb{R}$ such that
\begin{equation*}
\partial E_{t}=\{\rho(x,t)x:x\in\mathbb{S}^{n}_{+}\}.
\end{equation*}
%where $\rho(x,0)x=x+h(x,0)\nu(x),\ x\in\mathbb{S}_{+}^{n}$. %$h(x,0):\mathbb{S}_{+}^{n}\rightarrow\mathbb{R}$, $h\in C(\mathbb{S}_{+}^{n}\times[0,T])\cap C^{\infty}(\mathbb{S}_{+}^{n}\times(0,T])$ is the height function defined on $\mathbb{S}_{+}^{n}$.
%The fact that the normal vector on $\mathbb{S}_{+}^{n}$ is the position vector allows us to continue our calculation.
%$\ \rho(x,t)\in C^{2}(\mathbb{S}_{+}^{n}\times(0,T))\cap C^{0}(\mathbb{S}_{+}^{n}\times[0,T)).$
%for some $x_{t}\in\mathbb{R}^{n+1}$. .

Let us first deal the first equation of \eqref{1}. The following operation is temporarily independent of t, therefore, to simplify the notation, we set $\rho(x,t)=\rho(x)$. The same situation appearing later in this paper will not be explained again. We define the sets $E_{t'}$ as $\partial E_{t'}:=\{x+t'(\rho(x)-1)x:x\in\mathbb{S}^{n}_{+}\}$, with $t'\in[0,1]$, and family of diffeomorphisms $\Phi_{t'\rho}:\mathbb{S}^{n}_{+}\rightarrow\partial E_{t'}$ as
\[\Phi_{t'\rho}(x)=x+t'(\rho(x)-1)x.\]
Then for $x\in\mathbb{S}^{n}_{+}$ we have
\begin{equation}\label{M1}
-H_{\partial E_{t}}^{s}(\rho(x)x))=-\int_{0}^{1}\frac{d}{dt'}H_{\Phi_{t'\rho}(\mathbb{S}^{n}_{+})}^{s}(\Phi_{t'\rho}(x))dt'-H_{\mathbb{S}^{n}_{+}}^{s}(x).
\end{equation}
Define $\Phi_{a}:=\Phi_{(t'+a)\rho}(\Phi^{-1}_{t'\rho}(x))$ is a diffeomorphism and $\Phi_{a}:\partial E_{t'}\rightarrow\partial E_{t'+a}$.
\[\frac{d}{da}|_{a=0}\Phi_{a}(x)=(\rho(\Phi^{-1}_{t'\rho}(x))-1)\Phi^{-1}_{t'\rho}(x)\ for\ x\in\partial E_{t'}.\]

Let $J_{\Phi_{t'\rho}}$ be the Jacobian determinant of $\Phi_{t'\rho}(x)$ (see \cite{VD20,AFP2000}) as,
\[J_{\Phi_{t'\rho}}(x)=\{1+t'(\rho(x)-1)\}^{n-1}\sqrt{\{1+t'(\rho(x)-1)\}^{2}+|\nabla_{\tau}(1+t'(\rho(x)-1))|^{2}}.\]
%where $\rho(x,t)=1+t'h(x,t).$
We fix a basis vector $e_{i},\ i=1,\ldots,n+1$ of the ambient space $\mathbb{R}^{n+1}$ and we obtain a tangent field on $\mathbb{S}^{n}_{+}$ as
\[\tau_{i}=e_{i}-x_{i}x,\ for\ x\in\mathbb{S}^{n}_{+}\]
where $x_{i}=x\cdot e_{i}$. This induces a tangent field on the moving boundary $\partial E_{t'}$, which we denote by $X(t)$, as
\[X(t):=[1+t'(\rho(x)-1)]\tau_{i}(x)+\nabla_{i}[1+t'(\rho(x)-1)]x,\]
where
$\nabla_{i}[1+t'(\rho(x)-1)]=\nabla_{\tau}[1+t'(\rho(x)-1)]\cdot e_{i}$.
To continue calculating the fractional mean curvature, we need to use the lemma in \cite{VD20}.
\begin{lem}\label{1120-1}
(\cite{VD20}) Let $E\subset\mathbb{R}^{n+1}$ be a smooth bounded set, let $\Phi_{a}$ be a family of diffeomorphisms such that $\Phi_{0}(x)=x$, denote the velocity field by $X(x)=\frac{d}{da}|_{a=0}\Phi_{a}(x)$ and suppose $X\in C^{1+s+\alpha}(\Sigma)$. Then it holds
\[-\frac{d}{da}|_{a=0}H_{\Phi_{a}(E)}^{s}(\Phi_{a}(x))=2\int_{\partial E}\frac{(X(y)-X(x))\cdot\nu_{E}(y)}{|y-x|^{n+1+s}}d\mathcal{H}_{y}^{n}.\]
\end{lem}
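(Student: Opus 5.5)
Lemma \ref{1120-1} is a first-variation formula for the fractional mean curvature, and I would prove it by pulling the integral back to the fixed surface $\partial E$. The plan has three steps: write the curvature as a boundary integral, change variables with $\Phi_a$, and differentiate at $a=0$.

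\emph{Step 1: boundary representation.} By the divergence theorem, since $\operatorname{div}_y\big((y-x)|y-x|^{-n-1-s}\big)=-s|y-x|^{-n-1-s}$, the principal-value definition \eqref{712-1} can be rewritten as
\[
H^s_E(x)=\frac{2}{s}\int_{\partial E}\frac{(y-x)\cdot\nu_E(y)}{|y-x|^{n+1+s}}\,d\mathcal H^n_y .
\]
To justify this I would excise a ball $B_\varepsilon(x)$ and apply the divergence theorem on $E\setminus B_\varepsilon(x)$ and on $B_R\setminus(E\cup B_\varepsilon(x))$. The two spherical caps left over have areas differing by $O(\varepsilon^{n+1})$, because $\partial E$ is $C^{1,1}$ and hence lies within $O(\varepsilon^2)$ of its tangent plane in $B_\varepsilon(x)$. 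Their contributions, of size $\varepsilon^{-n-s}$ times this difference, therefore cancel in the limit. The integrand is $O(|y-x|^{1-n-s})$, so the surface integral converges absolutely.

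\emph{Step 2: pull back.} Apply Step 1 to $\Phi_a(E)$ at the point $\Phi_a(x)$ and substitute $y=\Phi_a(z)$ with $z\in\partial E$. Set $Y=\nabla\Phi_a(z)\,\nu_E(z)^{\perp}$-Jacobian for brevity; precisely, by Nanson's formula $\nu_{\Phi_a(E)}(\Phi_a(z))\,J^{\partial E}_{\Phi_a}(z)=\operatorname{cof}(\nabla\Phi_a)(z)\,\nu_E(z)$. This gives
\[
H^s_{\Phi_a(E)}(\Phi_a(x))=\frac2s\int_{\partial E}\frac{(\Phi_a(z)-\Phi_a(x))\cdot\operatorname{cof}(\nabla\Phi_a(z))\,\nu_E(z)}{|\Phi_a(z)-\Phi_a(x)|^{n+1+s}}\,d\mathcal H^n_z ,
\]
an integral over the fixed surface $\partial E$ whose only $a$-dependence is in the integrand.

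\emph{Step 3: differentiate at $a=0$.} Using $\frac{d}{da}\big|_{a=0}\operatorname{cof}\nabla\Phi_a=(\operatorname{div}X)I-(\nabla X)^T$, the derivative of the integrand splits into three terms:
\begin{itemize}
\item $(X(z)-X(x))\cdot\nu_E(z)\,|z-x|^{-n-1-s}$;
\item $-(n+1+s)\,(z-x)\cdot\nu_E(z)\,\frac{(z-x)\cdot(X(z)-X(x))}{|z-x|^{n+3+s}}$;
\item $(z-x)\cdot\big[(\operatorname{div}X)\nu_E-(\nabla X)^T\nu_E\big](z)\,|z-x|^{-n-1-s}$.
\end{itemize}
I would now show that everything other than $-s$ times the first term is a tangential divergence on $\partial E$, so that it integrates to zero by the (generalized) divergence theorem recalled in Section 2. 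The natural candidate is the vector field
\[
W(z)=\frac{\big[(z-x)\cdot\nu_E(z)\big](X(z)-X(x))-\big[(X(z)-X(x))\cdot\nu_E(z)\big](z-x)}{|z-x|^{n+1+s}} ,
\]
which is tangent to $\partial E$ by construction, since its normal component cancels. Its tangential divergence should reproduce the second and third terms together with $(1+s)$ times the first, up to curvature terms that cancel between the two brackets. Granting this, the derivative equals $\frac2s\cdot(-s)\int_{\partial E}\frac{(X(z)-X(x))\cdot\nu_E(z)}{|z-x|^{n+1+s}}\,d\mathcal H^n_z$, which is exactly the claimed identity with the minus sign.

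\emph{Main obstacle.} The main difficulty is justifying the exchange of $\frac{d}{da}$ with the singular integral and the vanishing of the boundary terms near $z=x$. The assumption $X\in C^{1+s+\alpha}$ gives $X(z)-X(x)-\nabla X(x)(z-x)=O(|z-x|^{1+s+\alpha})$. Combined with $(z-x)\cdot\nu_E(z)=O(|z-x|^2)$ and the odd symmetry of the linear part, this makes each term integrable in the principal-value sense, uniformly in small $a$. With that uniform bound, dominated convergence justifies differentiating under the integral, and the divergence of $W$ can be integrated over $\partial E\setminus B_\varepsilon(x)$ with boundary flux tending to zero as $\varepsilon\to0$.
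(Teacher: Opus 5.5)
The paper gives no proof of this lemma; it is quoted from \cite{VD20}. Your argument is therefore a self-contained proof rather than a variant of one in the text, and it is correct.

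\textbf{The identity you grant does hold.} Call your three bulleted terms $T_1,T_2,T_3$, and write $u=z-x$, $V=X(z)-X(x)$, $r=|u|$, $k=n+1+s$ and $W=r^{-k}A$ with $A=(u\cdot\nu_E)V-(V\cdot\nu_E)u$.
\begin{itemize}
\item You get $\nabla_\tau r^{-k}\cdot A=-k\,r^{-k-2}(u\cdot\nu_E)(u\cdot V)+k\,r^{-k}\,V\cdot\nu_E=T_2+kT_1$.
\item In $\operatorname{div}_\tau A$ the curvature terms $(\nabla_\tau\nu_E\,u)\cdot V$ and $(\nabla_\tau\nu_E\,V)\cdot u$ cancel, because the Weingarten map is symmetric.
\item Also $\operatorname{div}_\tau(z-x)=n$, and $(\operatorname{div}X)\nu_E-(\nabla X)^T\nu_E=(\operatorname{div}_\tau X)\nu_E-(\nabla_\tau X)^T\nu_E$. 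Hence $r^{-k}\operatorname{div}_\tau A=T_3-nT_1$.
\end{itemize}
Altogether $\operatorname{div}_\tau W=(1+s)T_1+T_2+T_3$ exactly. You should write this out rather than grant it.

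\textbf{Comparison with the usual volume argument.} That argument changes variables in $\mathrm{PV}\!\int(\chi_{E^c}-\chi_E)(z)\,J\Phi_a(z)\,|\Phi_a(z)-\Phi_a(x)|^{-n-1-s}\,dz$. It then notes that the $a$-derivative of the integrand at $a=0$ is $\operatorname{div}_z\big((X(z)-X(x))|z-x|^{-n-1-s}\big)$. One ambient divergence theorem on $E$ and on $E^c$ then gives the factor $2$ at once. Your route costs Nanson's formula, the cofactor derivative and the tangential identity above. In return, the interchange of $d/da$ and the integral is cleaner: you differentiate an absolutely convergent surface integral. In the volume argument the differentiated integrand is of order $|z-x|^{-n-1-s}$ in $\mathbb{R}^{n+1}$, so only a principal value makes sense.

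\textbf{Two justifications need tightening.}

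First, dominated convergence needs an $L^1$ majorant. $T_1$ and $T_3$ are each only $O(r^{-n-s})$, which is not integrable on an $n$-dimensional surface, so principal-value integrability of each term is not enough. What works is that their leading parts cancel:
\[
T_1+T_3=r^{-k}\big[(X(z)-X(x)-\nabla_\tau X(z)(z-x))\cdot\nu_E(z)+(u\cdot\nu_E(z))\operatorname{div}_\tau X(z)\big]=O(r^{\alpha-n}+r^{1-n-s}),
\]
while $T_2=O(r^{1-n-s})$. The same majorant holds for the derivative at small $a\neq0$, since it is the same expression on $\Phi_a(\partial E)$ with velocity $\partial_a\Phi_a\circ\Phi_a^{-1}$, times a bounded Jacobian. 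This requires those velocities to be bounded in $C^{1+s+\alpha}$ uniformly in $a$, which the lemma tacitly assumes. Split into $\operatorname{div}_\tau W-sT_1$ only after differentiating.

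Second, since $|W|\lesssim r^{1-n-s}$, the flux of $W$ through $\partial E\cap\partial B_\varepsilon(x)$ is a priori only $O(\varepsilon^{-s})$. Its vanishing uses two facts:
\begin{itemize}
\item With $\eta=-\nabla_\tau r/|\nabla_\tau r|$, one has $W\cdot\eta=r^{1-k}\,V\cdot\nu_E+O(r^{3-k})$.
\item $V\cdot\nu_E(z)=(\nabla_\tau X(x)(z-x))\cdot\nu_E(x)+O(r^2)+O(r^{1+s+\alpha})$, whose linear part integrates to essentially zero over the nearly round sphere.
\end{itemize}
The remaining contributions are $O(\varepsilon^{1-s}+\varepsilon^{\alpha})$. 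The same oddness gives existence of the principal value of $\int T_1$, which is how the right-hand side of the lemma must be read anyway.

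Finally, the sentence defining $Y$ in Step 2 is garbled and unused; drop it.
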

We apply Lemma \eqref{1120-1} and change of variables to deduce
\begin{align*}
&-\frac{d}{dt'}H_{\Phi_{t'\rho}(\mathbb{S}_{+}^{n})}^{s}(\Phi_{t'\rho}(x))\notag\\
=&2\int_{\partial E_{t'}}\frac{1}{|y-x|^{n+1+s}}((\rho(\Phi_{t'\rho}^{-1}(y))-1)\Phi_{t'\rho}^{-1}(y)-(\rho(\Phi_{t'\rho}^{-1}(x))-1)\Phi_{t'\rho}^{-1}(x))\cdot\nu_{E_{t'}}(y)d\mathcal{H}_{y}^{n}\notag\\
=&2\int_{\mathbb{S}^{n}_{+}}\frac{1}{|\Phi_{t'\rho}(y)-\Phi_{t'\rho}(x)|^{n+1+s}}((\rho(y)-1)y-(\rho(x)-1)x)\cdot\nu_{E_{t'}}(\Phi_{t'\rho}(y))J_{\Phi_{t'\rho}(y)}d\mathcal{H}_{y}^{n}\notag\\
=&2\int_{\mathbb{S}^{n}_{+}}\frac{1}{|\Phi_{t'\rho}(y)-\Phi_{t'\rho}(x)|^{n+1+s}}((\rho(y)-1)y-(\rho(x)-1)x)\notag\\
&\cdot\{[1+t'(\rho(y)-1)]^{n}y-[1+t'(\rho(y)-1)]^{n-1}\nabla_{\tau}(1+t'(\rho(y)-1))\}d\mathcal{H}_{y}^{n}\notag\\
=&2\int_{\mathbb{S}^{n}_{+}}\frac{\rho(y)-\rho(x)}{|\Phi_{t'\rho}(y)-\Phi_{t'\rho}(x)|^{n+1+s}}[1+t'(\rho(y)-1)]^{n}d\mathcal{H}_{y}^{n}\notag\\
&+2\int_{\mathbb{S}^{n}_{+}}\frac{y-x}{|\Phi_{t'\rho}(y)-\Phi_{t'\rho}(x)|^{n+1+s}}\notag\\
&\cdot\{[1+t'(\rho(y)-1)]^{n}y-[1+t'(\rho(y)-1)]^{n-1}\nabla_{\tau}(1+t'(\rho(y)-1))\}d\mathcal{H}_{y}^{n}(\rho(x)-1)\notag\\
=&I_{1}+I_{2}(\rho(x)-1).
\end{align*}
We may write the normal $\nu_{E_{t'}}(\Phi_{t'\rho}(y))$ as
\[\nu_{E_{t'}}(\Phi_{t'\rho}(y))=\frac{(1+t'(\rho(y)-1))y-\nabla_{\tau}(1+t'(\rho(y)-1))}{\sqrt{(1+t'(\rho(y)-1))^{2}+|\nabla_{\tau}(1+t'(\rho(y)-1))|^{2}}}.\]

For $I_{1}$,
\begin{align*}
I_{1}
=&2\int_{\mathbb{S}^{n}_{+}}\frac{\rho(y)-\rho(x)}{|\Phi_{t'\rho}(y)-\Phi_{t'\rho}(x)|^{n+1+s}}[1+t'(\rho(y)-1)]^{n}d\mathcal{H}_{y}^{n}\notag\\
=&2\int_{\mathbb{S}^{n}_{+}}\frac{\rho(y)-\rho(x)}{|y-x|^{n+1+s}}d\mathcal{H}_{y}^{n}\notag\\
&+2\int_{\mathbb{S}^{n}_{+}}(\rho(y)-\rho(x))[\frac{[1+t'(\rho(y)-1)]^{n}}{|y+t'(\rho(y)-1)y-(x+t'(\rho(x)-1)x)|^{n+1+s}}\notag\\
&-\frac{1}{|y-x|^{n+1+s}}]d\mathcal{H}_{y}^{n}.
%=&2\int_{\mathbb{S}_{+}^{n}}\frac{h(y)-h(x)}{|\Phi_{t'h}(y)-\Phi_{t'h}(x)|^{n+1+s}} \rho^{n}(y,t)d\mathcal{H}_{y}^{n}
%=&(\Delta)^{\frac{1+s}{2}}h(x)\notag\\
%&+2\int_{\mathbb{S}_{+}^{n}}(h(y)-h(x))[\frac{(1+t'h(x))^{n}}{|y+t'h(y)\nu(y)-(x+t'h(x)\nu(x))|^{n+1+s}}-\frac{1}{|y-x|^{n+1+s}}]d\mathcal{H}_{y}^{n},
\end{align*}
To shorten the notation, we write the kernel $K_{\rho}:\mathbb{S}_{+}^{n}\times\mathbb{S}_{+}^{n}\rightarrow[0,+\infty]$ as
\begin{align}\label{M2}
K_{t'\rho}(y,x):=\frac{1}{|y+t'(\rho(y)-1)y-(x+t'(\rho(x)-1)x)|^{n+1+s}}.
\end{align}
We may thus write
\begin{align*}
&\frac{[1+t'(\rho(y)-1)]^{n}}{|y+t'(\rho(y)-1)y-(x+t'(\rho(x)-1)x)|^{n+1+s}}-\frac{1}{|y-x|^{n+1+s}}\\
=&\int_{0}^{t'}\frac{d}{d\xi}\{[1+\xi(\rho(y)-1)]^{n}K_{\xi \rho}(y,x)\}d\xi.
\end{align*}
%In \cite{VD20}, Shiri, Wu and Baleanu define the fractional Laplacian on a smooth compact hypersurface. Assume that $\Sigma\subset\mathbb{R}^{n}$ is a smooth compact hypersurface,
We define the fractional Laplacian on $\mathbb{S}^{n}_{+}$ as
\[\Delta^{\frac{1+s}{2}}u(x):=2\int_{\mathbb{S}^{n}_{+}}\frac{u(y)-u(x)}{|x-y|^{n+1+s}}d\mathcal{H}_{y}^{n},\]
this should be understood in principal valued sense, but from now on we assume this without further mention.

So $I_{1}$ defined on $\mathbb{S}^{n}_{+}$ can be written as
\begin{align*}
I_{1}
=&\Delta^{\frac{1+s}{2}}\rho(x)\notag\\
&+2\int_{\mathbb{S}^{n}_{+}}(\rho(y)-\rho(x))\int_{0}^{t'}\frac{d}{d\xi}\{[1+\xi(\rho(y)-1)]^{n}K_{\xi \rho}(y,x)\}d\xi d\mathcal{H}_{y}^{n}.
\end{align*}
For $I_{2}$,
\begin{align*}
I_{2}
=&2\int_{\mathbb{S}^{n}_{+}}\frac{y-x}{|\Phi_{t'\rho}(y)-\Phi_{t'\rho}(x)|^{n+1+s}}\notag\\
&\cdot\{[1+t'(\rho(y)-1)]^{n}y-[1+t'(\rho(y)-1)]^{n-1}\nabla_{\tau}(1+t'(\rho(y)-1))\}d\mathcal{H}_{y}^{n}\notag\\
=&\int_{\mathbb{S}^{n}_{+}}\frac{1}{|y-x|^{n-1+s}}d\mathcal{H}_{y}^{n}\notag\\
&+\int_{\mathbb{S}^{n}_{+}}|y-x|^{2}\int_{0}^{t'}\frac{d}{d\xi}\{[1+\xi(\rho(y)-1)]^{n}K_{\xi \rho}(y,x)\}d\xi d\mathcal{H}_{y}^{n}\notag\\
&-2\int_{\mathbb{S}^{n}_{+}}\frac{(y-x)\cdot t'\nabla_{\tau}\rho(y)\cdot[1+t'(\rho(y)-1)]^{n-1}}{|\Phi_{t'\rho}(y)-\Phi_{t'\rho}(x)|^{n+1+s}}d\mathcal{H}_{y}^{n}.
\end{align*}

We may finally write the fractional mean curvature of $\partial E$ by the fractional Laplacian, \eqref{M1} and the previous calculations
\begin{align*}
-H_{\partial E_{t}}^{s}(\rho(x)x)=\Delta^{\frac{1+s}{2}}\rho(x)-H_{\mathbb{S}^{n}_{+}}^{s}+R_{1,\rho}(x)+R_{2,\rho}(x)(\rho(x)-1).
\end{align*}
%where $L[h](x)=(\Delta)^{\frac{1+s}{2}}h(x)+c^{2}_{s}(x)h(x),$ $c^{2}_{s}(x)=\int_{\mathbb{S}_{+}^{n}}\frac{|\nu(y)-\nu(x)|^{2}}{|y-x|^{n+1+s}}d\mathcal{H}_{y}^{n}.$ We note that since $\mathbb{S}_{+}^{n}$ is a smooth surface, $c_{s}^{2}(\cdot)$ defines a smooth function on $\mathbb{S}_{+}^{n}$.

The remainder terms $R_{1,\rho}$ and $R_{2,\rho}$ are defined for a generic function $\rho\in C^{1+s+\alpha}(\mathbb{S}_{+}^{n})$ with $\|\rho\|_{C^{1+s+\alpha}(\mathbb{S}_{+}^{n})}\leq C$ as
\begin{equation}\label{302}
R_{1,\rho}(x):=2\int_{0}^{1}\int_{0}^{t'}\int_{\mathbb{S}^{n}_{+}}(\rho(y)-\rho(x))\frac{d}{d\xi}\{[1+\xi(\rho(y)-1)]^{n}K_{\xi \rho}(y,x)\}d\mathcal{H}_{y}^{n}d\xi dt'
\end{equation}
and
\begin{equation}\label{303}
\begin{split}
R_{2,\rho}(x):=&\int_{\mathbb{S}^{n}_{+}}\frac{1}{|y-x|^{n-1+s}}d\mathcal{H}_{y}^{n}\notag\\
&+\int_{0}^{1}\int_{0}^{t'}\int_{\mathbb{S}^{n}_{+}}|y-x|^{2}\frac{d}{d\xi}\{[1+\xi(\rho(y)-1)]^{n}K_{\xi \rho}(y,x)\}d\mathcal{H}_{y}^{n}d\xi dt'\notag\\
&-2\int_{0}^{1}\int_{\mathbb{S}^{n}_{+}}(y-x)\cdot t'\nabla_{\tau}\rho(y)[1+t'(\rho(y)-1)]^{n-1}K_{t'\rho}(y,x)d\mathcal{H}_{y}^{n} dt'.
\end{split}\end{equation}
We recall from \cite{MC2011,VD20,SMVE2019} that we may finally write the equation for $\rho(x)$ by combining \eqref{1}, \eqref{M1}, and the normal velocity as
$$V_{t}(\rho(x)x)=\partial_{t}\rho(x)\frac{\rho(x)}{\sqrt{\rho^{2}(x)+|\nabla_{\tau}\rho(x)|^{2}}},$$
then
\begin{equation}\label{5}
\partial_{t}\rho(x)=A(x,\rho,\nabla_{\gamma}\rho)\{\Delta^{\frac{1+s}{2}}\rho(x)-H_{\mathbb{S}^{n}_{+}}^{s}+R_{1,\rho}(x)+R_{2,\rho}(x)(\rho(x)-1)\}.
\end{equation}
To shorten the notation, we write $A(x,\rho,\nabla_{\tau}\rho):=\frac{\sqrt{\rho^{2}(x)+|\nabla_{\tau}\rho(x)|^{2}}}{\rho(x)}.$

For every $t\in[0,T)$ there is a function $\rho(\cdot):\mathbb{S}_{+}^{n}\rightarrow\mathbb{R}$ such that
\begin{equation*}
\partial E_{t}=\{\rho(x)x:x\in\mathbb{S}_{+}^{n}\}.
\end{equation*}
%We denote $\nabla$ be the gradient on $\mathbb{S}^{n}_{+}$ with respect to the standard round metric $\sigma:=g_{\mathbb{S}^{n-1}}$. Then in terms of $\rho$ the metric $g_{ij}$ is given by
%\[g_{ij}=\rho^{2}\delta_{ij}+\rho_{i}\rho_{j},\]
%where $\delta_{ij}:=\langle e_{i},e_{j}\rangle,\ \langle\cdot,\cdot\rangle$ denote the standard inner product in $\mathbb{R}^{n}_{+}$ and $\rho_{i}:=\nabla_{e_{i}}\rho$, $\rho_{ij}:=\nabla_{e_{i}}\nabla_{e_{j}}\rho$. The inverse of metric $g$ is
%\[g^{ij}
%=\rho^{-2}(\delta^{ij}-\frac{\rho^{i}\rho^{j}}{\rho^{2}+|\nabla\rho|^{2}}),\]
%where $\delta^{ij}$ denote the inverse of $\delta_{ij}$, $u^{i}=\delta^{ik}u_{k}$.

The unit outward normal $\nu(\rho(x)x)$ is given by
\[\nu(\rho(x)x)=\frac{\rho(x)x-\nabla_{\tau}\rho(x)}{\sqrt{\rho^{2}(x)+|\nabla_{\tau}\rho(x)|^{2}}},\]
where $\nabla_{\tau}\rho(x)$ is the tangential gradient of $\rho(x)$ on $\mathbb{S}^{n}_{+}$. %$|\nabla_{\gamma}(1+t'(\rho(y)-1))|^{2}=g^{ij}\nabla_{i}(1+t'(\rho(y)-1))\nabla_{j}(1+t'(\rho(y)-1)).$

Along $\partial\mathbb{S}_{+}^{n}\subset\partial\mathbb{R}_{+}^{n}$, $x_{n+1}=0$, $\bar{N}\circ \iota=-e_{n+1}$, $\langle\nu(x),\bar{N}\circ \iota\rangle=-\cos\theta$,
\begin{equation*}
\langle\nu(x),\bar{N}\circ \iota\rangle=\frac{\langle\nabla_{\tau}\rho(x),e_{n+1}\rangle}
{\sqrt{\rho^{2}(x)+|\nabla_{\tau}\rho(x)|^{2}}}.
\end{equation*}
On the boundary $\partial\mathbb{S}_{+}^{n}$, let $\eta$ be the outer normal vector.
%(pointing towards the interior of $\mathbb{S}_{+}^{n}$
Since the boundary of $\mathbb{S}_{+}^{n}$ is the equator $\mathbb{S}^{n-1}$, the normal vector $\eta$ points in the direction of $-e_{n+1}$ within $\mathbb{S}_{+}^{n}$. Therefore, the normal component of $\nabla_{\tau}\rho(x)$ is
\[\frac{\partial\rho(x)}{\partial \eta}=\langle\nabla_{\tau}\rho(x),\eta\rangle=\langle\nabla_{\tau}\rho(x),-e_{n+1}\rangle.\]

After combining the above calculations and summarizing them, we can obtain
\[\frac{\partial\rho(x)}{\partial\eta}=\cos\theta\sqrt{\rho^{2}(x)+|\nabla_{\tau}\rho(x)|^{2}}.\]

Therefore, flow \eqref{1} is equivalent to
\begin{equation}\label{301}
\left\{
\begin{array}{ll}
\partial_{t}\rho(x,t)=A(x,\rho,\nabla_{\tau}\rho)(\Delta^{\frac{1+s}{2}}\rho(x,t)-H_{\mathbb{S}_{+}^{n}}^{s}\\
\ \ \ +R_{1,\rho}(x)+R_{2,\rho}(x)(\rho(x,t)-1))
 &on\ \mathbb{S}_{+}^{n}\times[0,T)\\
\frac{\partial\rho(x,t)}{\partial\eta}=\cos\theta\sqrt{\rho^{2}(x,t)+|\nabla_{\tau}\rho(x,t)|^{2}}, &on\ \partial\mathbb{S}_{+}^{n}\times[0,T),\\
\end{array}
\right.
\end{equation}
where $A(x,\rho,\nabla_{\tau}\rho):=\frac{\sqrt{\rho^{2}(x,t)+|\nabla_{\tau}\rho(x,t)|^{2}}}{\rho(x,t)}.$
\begin{prop}\label{1129-1}
Assume that the flow $(E_{t})_{t\in(0,T]}$ is a classical solution of \eqref{1} starting from $E_{0}$ with
$\partial E_{0}=\{\rho(x,0)x:x\in\mathbb{S}_{+}^{n}\}$. Then the function $\rho\in C(\mathbb{S}_{+}^{n}\times[0,T])\bigcap C^{\infty}(\mathbb{S}_{+}^{n}\times(0,T])$ with $\partial E_{t}=\{\rho(x,t)x:x\in\mathbb{S}_{+}^{n}\}$ is a solution of \eqref{301}.
\end{prop}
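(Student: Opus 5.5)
The proof is essentially a verification that the computations of this section are legitimate for a classical solution. The plan is as follows.

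\emph{Step 1: existence and regularity of $\rho$.} Since $E_t$ is a classical solution, each $\partial E_t$ is a smooth embedded hypersurface for $t\in(0,T]$ and depends continuously on $t$ up to $t=0$. By star-shapedness of $\partial E_0$, and by continuity for $t$ small, every ray $\{rx:r>0\}$ with $x\in\overline{\mathbb{S}^n_+}$ meets $\partial E_t$ exactly once and transversally, so that $x\cdot\nu>0$. The radial function $\rho(x,t)$ is then well defined and strictly positive. Applying the implicit function theorem to $F(x,r,t)=\iota$-level-set defining function evaluated at $rx$ gives $\rho\in C^\infty(\mathbb{S}^n_+\times(0,T])$. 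Continuity of the flow at $t=0$ gives $\rho\in C(\mathbb{S}^n_+\times[0,T])$. Transversality at the boundary equator holds because the contact angle satisfies $\theta\in(0,\pi)$, so the parametrization extends up to $\partial\mathbb{S}^n_+$.

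\emph{Step 2: the interior equation.} For fixed $t$, the identity
\[-H^s_{\partial E_t}(\rho x)=\Delta^{\frac{1+s}{2}}\rho-H^s_{\mathbb{S}^n_+}+R_{1,\rho}+R_{2,\rho}(\rho-1)\]
is obtained as follows. Interpolate by $\Phi_{t'\rho}$ as in (M1). Differentiate in $t'$ using Lemma \ref{1120-1}; this is allowed since $\rho(\cdot,t)$ is smooth, so the velocity $(\rho-1)x$ lies in $C^{1+s+\alpha}$. Then change variables with the Jacobian $J_{\Phi_{t'\rho}}$, split into $I_1$ and $I_2$, and integrate in $t'\in[0,1]$.

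The main obstacle is justifying that these singular integrals converge and that differentiation under the integral sign is permitted, uniformly in $t'$. I would handle this with a Taylor expansion near the diagonal. Write $|\Phi_{t'\rho}(y)-\Phi_{t'\rho}(x)|$ comparably to $|y-x|$ (bi-Lipschitz, since $\rho>0$). The odd first-order terms then cancel in the principal value sense, leaving an integrand of order $|y-x|^{1-n-s}$, which is integrable.

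The normal velocity of the radial parametrization is $V=\partial_t\rho\,\rho/\sqrt{\rho^2+|\nabla_\tau\rho|^2}$, which follows from $\partial_t(\rho x)\cdot\nu$ and the formula for $\nu$. Setting $V=-H^s$, as required by $(\partial_t\iota)^\perp=-H^s\nu$, and multiplying by $A$ gives the first line of (\ref{301}).

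\emph{Step 3: the boundary condition.} At $x\in\partial\mathbb{S}^n_+$ we have $x_{n+1}=0$ and $\bar N=-e_{n+1}$. Inserting the formula for $\nu$ gives
\[\langle\nu,\bar N\rangle=\frac{\nabla_\tau\rho\cdot e_{n+1}}{\sqrt{\rho^2+|\nabla_\tau\rho|^2}}.\]
Since $\eta=-e_{n+1}$ and $\langle\nu,\bar N\rangle=-\cos\theta$, we obtain $\partial_\eta\rho=\cos\theta\sqrt{\rho^2+|\nabla_\tau\rho|^2}$. Finally, the condition $\partial E_0=\{\rho(x,0)x\}$ is the initial datum, which completes the verification.
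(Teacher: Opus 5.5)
Your Steps 2 and 3 reproduce the computation of Section 3, so your route is the paper's. The trouble is the step that carries all the weight: the appeal to Lemma \ref{1120-1}.

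You only check that the velocity $(\rho-1)x$ is $C^{1+s+\alpha}$. But the lemma is about a smooth bounded set, and its integral runs over the \emph{whole} boundary. The set whose fractional curvature you differentiate is the region $E_{t'}$ enclosed by $\Phi_{t'\rho}(\mathbb{S}^{n}_{+})$ and $\partial\mathbb{R}^{n+1}_{+}$. Its boundary also contains the flat base $D_{t'}\subset\partial\mathbb{R}^{n+1}_{+}$, with a corner along the contact line.

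At an interior point $x$ you can redo the first variation for this Lipschitz set by Gauss--Green, but then $D_{t'}$ contributes. On $D_{t'}$ we have $\nu=-e_{n+1}$ and the base only slides inside the hyperplane, so $X(y)\cdot\nu(y)=0$, while $-X(\Phi_{t'\rho}(x))\cdot\nu(y)=(\rho(x)-1)x_{n+1}$. This adds
\[
2(\rho(x)-1)\,x_{n+1}\int_{D_{t'}}\frac{d\mathcal{H}^{n}_{y}}{|y-\Phi_{t'\rho}(x)|^{n+1+s}}
\]
to $-\frac{d}{dt'}H^{s}$. This term has a fixed sign and is not contained in $R_{2,\rho}$.

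It is not cosmetic. Take $\rho\equiv c\neq 1$. Then $\Delta^{\frac{1+s}{2}}\rho=R_{1,\rho}=0$ and $R_{2,\rho}(c-1)=\frac{1-c^{-s}}{s}\int_{\mathbb{S}^{n}_{+}}|y-x|^{1-n-s}d\mathcal{H}^{n}_{y}$. Since $H^{s}_{\mathbb{S}^{n}_{+}}$ is the $t'=0$ value in \eqref{M1}, i.e.\ the curvature of the half ball $B_{1}^{+}$, the scaling $H^{s}_{B_{c}^{+}}(cx)=c^{-s}H^{s}_{B_{1}^{+}}(x)$ turns your identity into
\[
H^{s}_{B_{1}^{+}}(x)=\frac{1}{s}\int_{\mathbb{S}^{n}_{+}}|y-x|^{1-n-s}d\mathcal{H}^{n}_{y}.
\]
The divergence theorem instead gives this plus $\frac{2}{s}x_{n+1}\int_{D}|y-x|^{-n-1-s}d\mathcal{H}^{n}_{y}>0$, where $D$ is the unit disc in $\partial\mathbb{R}^{n+1}_{+}$.

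So, with $H^{s}$ defined by \eqref{712-1} for the enclosed region, a classical solution does not satisfy the first line of \eqref{301} with the $R_{2,\rho}$ given there. You must do one of two things:
\begin{itemize}
\item add the base term, integrated in $t'$, to $R_{2,\rho}$; or
\item fix a different notion of $H^{s}$ for hypersurfaces with boundary and re-derive its first variation, which then produces terms along the contact line.
\end{itemize}
Lemma \ref{1120-1} as stated simply does not apply.

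Your convergence argument has a related blind spot. Cancelling the odd first-order terms needs a ball around $x$ that is symmetric inside the domain of integration. Such a ball exists at each interior point, but not uniformly up to $\partial\mathbb{S}^{n}_{+}$. Near the equator, your own Step 3 gives $\partial_{\eta}\rho=\cos\theta\sqrt{\rho^{2}+|\nabla_{\tau}\rho|^{2}}\neq 0$ for $\theta\neq\pi/2$. The unmatched first-order part then makes the regional operator $\Delta^{\frac{1+s}{2}}\rho(x)$ grow like $\mathrm{dist}(x,\partial\mathbb{S}^{n}_{+})^{-s}$, and the base term above is unbounded there too. So even after the correction, the interior equation holds only pointwise on the open hemisphere. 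You should say so explicitly rather than treat the integrals as uniformly controlled.
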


\section{Some lemmas}
\ \ \ \ The formula \eqref{5} can be written as
\begin{equation*}
\partial_{t}\rho(x,t)=\Delta^{\frac{1+s}{2}}\rho(x,t)+P(x,\rho(\cdot,t),\nabla \rho(\cdot,t))-H_{\mathbb{S}_{+}^{n}}^{s},\ on\ \mathbb{S}_{+}^{n}\times[0,T)
\end{equation*}
where the remainder term is defined for a generic function $\rho\in C^{\infty}$ as
\begin{align*}
P(x,\rho(\cdot,t),\nabla \rho(\cdot,t))=&(A(x,\rho,\nabla_{\tau}\rho)-1)(\Delta^{\frac{1+s}{2}}\rho(x,t)-H_{\mathbb{S}_{+}^{n}}^{s})\notag\\
&+(A(x,\rho,\nabla_{\tau}\rho))[R_{1,\rho}(x)+R_{2,\rho}(x)(\rho(x,t)-1)],
\end{align*}
where $A(x,\rho,\nabla_{\tau}\rho):=\frac{\sqrt{\rho^{2}(x)+|\nabla_{\tau}\rho(x)|^{2}}}{\rho(x)}.$
Throughout this section $K$ denotes a generic kernel, if not otherwise mentioned, while $K_{\rho}$ is the kernel defined in \eqref{M2}. Next we define the class of kernels which we will use throughout the section.
%This definition is the similar as in \cite{VD20}, and for convenience, the definition is as follows:

%Assuming that the initial hypersurface  $\partial E_{0}$ is star-shaped, according to \cite{SMVE2019}, $\partial E_{t}$ remains star-shaped, and there exists $T^{\ast}>0$ such that $\frac{\sqrt{\rho^{2}(x,t)+|\nabla_{\tau}\rho(x,t)|^{2}}}{\rho(x,t)}\leq C$ in $[0,T^{\ast})$, where $C$ depends on $\frac{\sqrt{\rho^{2}(x,0)+|\nabla_{\tau}\rho(x,0)|^{2}}}{\rho(x,0)}$ and $\sup|H_{s}|$. In addition, if $H_{s}$ remains bounded, $\nabla_{\tau}\rho(x,t)$ is bounded for a fixed time that depends of the initial condition and bounds of $H_{s}$.

%On the other hand, $\partial E_{0}$ is a $C^{1,1}$-regular hypersurface, $\mathbb{S}_{+}^{n}$ is a compact hypersurface, this implies that $\|\nu_{C^{1}(\mathbb{S}_{+}^{n})}\|\leq C$, $\|\nu_{C^{1+s+\alpha}(\mathbb{S}_{+}^{n})}\|\leq C$.
In this section, we need to establish Schauder estimates of the fractional heat equation with nonlinear term.
\begin{lem}\label{521-1}
Let $0<s<1$, if $E\subset\mathbb{R}^{n+1}$ is a bounded, $C^{1,1}$-regular set with nonempty interior. Then there exists a constant $C>0$, depending on $n$, diam $E$, such that for any $x\in \partial E$, we have
$$\int_{\partial E}\frac{1}{|x-y|^{n-s}}d\mathcal{H}_{y}^{n}\leq C.$$
\end{lem}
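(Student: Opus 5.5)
The plan is to split the integral into a near part and a far part. On the near part, the $C^{1,1}$ regularity lets us write $\partial E$ as a Lipschitz graph over its tangent plane, so the integral reduces to a flat one in $\mathbb{R}^{n}$. On the far part, the kernel is bounded and the total surface measure is controlled. The exponent $n-s$ is strictly smaller than the dimension $n$ of $\partial E$, so the kernel is integrable; the only work is to make the constants uniform in $x$.

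\textbf{Uniform graph representation.} Since $E$ is bounded and $C^{1,1}$-regular, there are $r_0>0$ and $L>0$, depending only on the $C^{1,1}$ character of $E$, with the following property. For every $x\in\partial E$, after a rigid motion sending $x$ to $0$ and $\nu_E(x)$ to $e_{n+1}$, we have
\[
\partial E\cap \big(B'_{r_0}\times(-r_0,r_0)\big)=\{(z',f(z')):z'\in B'_{r_0}\},
\]
where $f\in C^{1,1}(B'_{r_0})$, $f(0)=0$, $\nabla f(0)=0$ and $\|\nabla f\|_{L^\infty}\le L$. This is the standard uniform ball or uniform graph property of compact $C^{1,1}$ boundaries; the constants depend on the $C^{1,1}$ norm of $\partial E$ together with its size, which the paper subsumes under "diam $E$". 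Fixing $r_0$ and $L$ uniformly in $x$ is the only delicate point: all remaining constants must come from these two and from no pointwise data.

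\textbf{Near part.} Let $U_x:=\partial E\cap(B'_{r_0}\times(-r_0,r_0))$. Change variables $y=(z',f(z'))$, so that $d\mathcal H^n_y=\sqrt{1+|\nabla f|^2}\,dz'\le\sqrt{1+L^2}\,dz'$. Since $|x-y|\ge|z'|$,
\[
\int_{U_x}\frac{d\mathcal H^n_y}{|x-y|^{n-s}}\le\sqrt{1+L^2}\int_{B'_{r_0}}\frac{dz'}{|z'|^{n-s}}=\sqrt{1+L^2}\,\frac{|\mathbb S^{n-1}|\,r_0^{s}}{s}.
\]

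\textbf{Far part.} On $\partial E\setminus U_x$ we have $|x-y|\ge r_0$, so this contribution is at most $r_0^{-(n-s)}\mathcal H^n(\partial E)$. It remains to bound $\mathcal H^n(\partial E)$. Cover $\partial E$ by balls $B_{r_0/2}(x_i)$ with centres on $\partial E$ and pairwise separated by at least $r_0/2$; a volume-packing argument inside a ball of radius comparable to diam $E$ shows the number of such balls is at most $C(n)(\operatorname{diam}E/r_0)^{n+1}$. Each ball meets $\partial E$ inside the graph patch around $x_i$, whose area is at most $\sqrt{1+L^2}\,|B'_{r_0}|$. Adding the near and far bounds gives the lemma with $C=C(n,s,\operatorname{diam}E,r_0,L)$, independent of $x\in\partial E$.
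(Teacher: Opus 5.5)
Your proof is correct. Its near-field part is the same as the paper's: both write $\partial E$ near $x$ as a graph, use $|x-y|\ge|z'|$ and a bound on the area factor, and obtain $C\,r_0^{s}/s$.

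The routes differ in the far part. The paper integrates over $\partial E\setminus E_\delta$ in spherical shells about $x$. It replaces the surface measure by $\omega_{n+1}r^{n-1}\,dr$, which gives $\omega_{n+1}\int_\delta^R r^{s-n}r^{n-1}\,dr\le\omega_{n+1}R^{s}/s$. This tacitly assumes a uniform area-growth bound $\mathcal{H}^n(\partial E\cap B_r(x))\le Cr^{n}$ at every scale. The paper never justifies that bound, and it is exactly where the $C^{1,1}$ character of $E$, not only $n$ and $\operatorname{diam}E$, has to enter.

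You avoid any multiscale density estimate. You use the graph chart only at the single scale $r_0$, bound the kernel by $r_0^{-(n-s)}$ off the chart, and control $\mathcal{H}^n(\partial E)$ by packing at most $C(n)(\operatorname{diam}E/r_0)^{n+1}$ charts, each of area at most $\sqrt{1+L^2}\,|B'_{r_0}|$.

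What each route buys:
\begin{itemize}
\item Once the density bound is supplied, the paper's shell computation gives a constant of the form $\sim R^{s}/s$.
\item Your argument gives the cruder constant $r_0^{-(n-s)}\mathcal{H}^n(\partial E)$, but it is complete and makes the dependence on $r_0$ and $L$ explicit.
\end{itemize}

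Your caveat about the constant is also right. A smooth, highly oscillating $\partial E$ inside a fixed ball can have arbitrarily large area. Since the integral is at least $(\operatorname{diam}E)^{-(n-s)}\mathcal{H}^n(\partial E)$, a bound depending on $n$ and $\operatorname{diam}E$ alone cannot hold.
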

\begin{proof}
We can suppose that $x$ is the origin. Since $\partial E$ is smooth and uniformly $C^{1,1}$-regular, we may write it locally as a graph of a smooth function, i.e., there exists a smooth function $\phi:\ B_{\delta}\subset\mathbb{R}^{n+1}$ such that
$$E_{\delta}:=\partial E\cap C_{\delta}=\{(x',x_{n+1})\in\mathbb{R}^{n+1}:\ x_{n+1}=\phi(x')\}$$
where $C_{\delta}$ denotes the cylinder $C_{\delta}=\{x=(x',x_{n+1})\in\mathbb{R}^{n+1}:\ |x'|<\delta,\ |x_{n+1}|<\delta\}$.

We note that $\partial E$ is uniformly $C^{1,1}$-regular,
%we have $\|\phi\|_{C^{1+s+\alpha}(E_{\delta})}\leq \delta^{1-s-\alpha}$,
\begin{align*}
\int_{\partial E\cap E_{\delta}}\frac{1}{|y|^{n-s}}d\mathcal{H}_{y}^{n}&\leq\int_{\begin{subarray}{1}
y'\in\mathbb{R}^{n} \\ |y'|\leq\delta
\end{subarray}}\frac{\sqrt{1+|\nabla\phi(y')|^{2}}}{|y'|^{n-s}}dy'\notag\\
&\leq C\int_{0}^{\delta}\frac{\tau^{n-1}}{\tau^{n-s}}d\tau\notag\\
&=\frac{C}{s}\delta^{s}.
\end{align*}
For the remaining part of the integral,
since $E\subset\mathbb{R}^{n+1}$ is bounded, we can assume that $|y|\leq R$ enough large,
$$\underset{\delta\rightarrow0}\lim\int_{\partial E\setminus E_{\delta}}\frac{1}{|y|^{n-s}}d\mathcal{H}_{y}^{n}=\omega_{n+1}\int_{\delta}^{R}r^{s-n}\cdot r^{n-1}dr<\frac{\omega_{n+1}}{s}R^{s}.$$
Combining the above calculations, we have obtained the conclusion.
\end{proof}

\begin{lem}\label{4041}
Assume that $\rho,\ w\in C^{1+s+\alpha}(\mathbb{S}_{+}^{n})$, $s\in(0,1),\ \alpha\in(0,1-s)$, then the following hold.\\
(i) The kernel $K_{\rho}$ defined in \eqref{M2} with $t'=1$ belongs to the class $\mathcal{S}_{\kappa_{1}}$ in Definition \ref{D1}, with $\kappa_{1}\leq C$.\\
(ii) The kernel $\frac{d}{d\xi}|_{\xi=0}K_{\rho+\xi w}$ belongs to the class $\mathcal{S}_{\kappa_{2}}$ in Definition \ref{D1}, with
\[\kappa_{2}\leq C\|w\|_{C^{1+s+\alpha}(\mathbb{S}_{+}^{n})}.\]
\end{lem}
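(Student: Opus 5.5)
The plan is to reduce both statements to regularity properties of a single bounded, smooth \emph{nondegenerate} function multiplying the model kernel $|y-x|^{-(n+1+s)}$, and then check the conditions of Definition \ref{D1} one by one. I take them to be, as in \cite{VD20}: a size bound $|K(y,x)|\leq \kappa|y-x|^{-(n+1+s)}$, a H\"older-type continuity of $K(\cdot,x)$ in the base point with exponent $s+\alpha$, and a bound on the ``symmetrized'' derivative $(D_{X(y)}+D_{X(x)})K$ needed in \eqref{622-1}. I cannot see the definition itself, so this list is my assumption.

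\textbf{Algebraic reduction.} For $t'=1$ we have $\Phi_{\rho}(x)=\rho(x)x$. Since $|x|=|y|=1$,
\begin{equation*}
|\rho(y)y-\rho(x)x|^{2}=\rho(x)\rho(y)|y-x|^{2}+(\rho(y)-\rho(x))^{2}.
\end{equation*}
Setting $q_{\rho}(y,x):=\frac{\rho(y)-\rho(x)}{|y-x|}$, this gives
\begin{equation*}
K_{\rho}(y,x)=\frac{G_{\rho}(y,x)}{|y-x|^{n+1+s}},\qquad G_{\rho}(y,x):=\bigl(\rho(x)\rho(y)+q_{\rho}(y,x)^{2}\bigr)^{-\frac{n+1+s}{2}}.
\end{equation*}
In particular $K_{\rho}$ is exactly symmetric in $(x,y)$, so any antisymmetry condition in Definition \ref{D1} holds trivially. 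Star-shapedness together with $\|\rho\|_{C^{1+s+\alpha}}\leq C$ gives $c\leq\rho\leq C$. Also $|q_{\rho}|\leq\|\nabla_{\tau}\rho\|_{C^{0}}$ by the mean value theorem along the geodesic, using that chord and arc length are comparable. Hence $G_{\rho}$ is bounded above and below, which gives the size condition with $\kappa_{1}\leq C$.

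\textbf{Regularity of the difference quotient.} This is the key step. I would write
\begin{equation*}
q_{\rho}(y,x)=\int_{0}^{1}\nabla_{\tau}\rho(\gamma(t))\cdot\dot\gamma(t)\,dt\cdot\frac{d(x,y)}{|y-x|},
\end{equation*}
with $\gamma$ the geodesic from $x$ to $y$. Since $\nabla_{\tau}\rho\in C^{s+\alpha}$ and the geometric factors are smooth, this yields $|q_{\rho}(y,x)-q_{\rho}(y,z)|\leq C\|\rho\|_{C^{1+s+\alpha}}|x-z|^{s+\alpha}$, uniformly in $y$. For the derivative condition I would note that $(D_{X(y)}+D_{X(x)})$ applied to $q_{\rho}$ moves both points simultaneously, so the singular factor $|y-x|^{-1}$ produces only a smooth commutator. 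The remaining term is $\int_0^1 (D_X\nabla_\tau\rho)$-type averages, which I would control through the integral representation and \eqref{721-3}, using $s+\alpha<1$. Composing with the smooth map $(a,b,c)\mapsto(ab+c^{2})^{-(n+1+s)/2}$ on the compact range found above then transfers all of these bounds to $G_{\rho}$, and hence to $K_{\rho}$.

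\textbf{Part (ii).} Differentiating the formula above in $\xi$ gives
\begin{equation*}
\frac{d}{d\xi}\Big|_{\xi=0}K_{\rho+\xi w}=-\frac{n+1+s}{2}\,\frac{G_{\rho}^{\frac{n+3+s}{n+1+s}}\bigl(\rho(x)w(y)+w(x)\rho(y)+2q_{\rho}q_{w}\bigr)}{|y-x|^{n+1+s}}.
\end{equation*}
This expression is linear in $w$. The same estimates as in the previous step, applied to $q_{w}$ and to $w$, give each condition with a constant $C\|w\|_{C^{1+s+\alpha}}$, so $\kappa_{2}\leq C\|w\|_{C^{1+s+\alpha}(\mathbb{S}^{n}_{+})}$.

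\textbf{Main obstacle.} I expect the hardest step to be the H\"older and derivative control of $q_{\rho}$ near the diagonal, and in particular near $\partial\mathbb{S}^{n}_{+}$. There the geodesic-average representation must stay inside $\overline{\mathbb{S}^{n}_{+}}$. This holds because the closed hemisphere is geodesically convex. I would also extend $\rho$ to a $C^{1+s+\alpha}$ function on $\mathbb{S}^{n}$ to avoid boundary issues.
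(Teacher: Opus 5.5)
There is a genuine gap. You verify a guessed list of conditions, but Definition \ref{D1} asks for something different. There $K\in\mathcal{S}_{\kappa}$ means:
(i) $|K(y,x)|\leq\kappa|y-x|^{-(n+1+s)}$;
(ii) $|\nabla_{x}K(y,x)|\leq\kappa|y-x|^{-(n+2+s)}$;
(iii) the vector-valued function $\psi(x)=\int_{\Sigma}(y-x)K(y,x)\,d\mathcal{H}^{n}_{y}$ satisfies $\|\psi\|_{C^{\alpha}(\Sigma)}\leq\kappa$.

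Your factorization $K_{\rho}=G_{\rho}|y-x|^{-(n+1+s)}$ is correct, and it is a cleaner form of the paper's two-sided bound $c|y-x|\leq|\Phi(y)-\Phi(x)|\leq C|y-x|$. It gives (i) at once, and also (ii), since $|\nabla_{x}q_{\rho}|\leq C|y-x|^{-1}$. But nothing in your argument addresses (iii), which is the only substantive condition. The integrand $(y-x)K_{\rho}$ has size $|y-x|^{-n-s}$, so $\psi$ exists only through cancellation, and you must show that what survives is $C^{\alpha}$ in $x$.

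The paper does this by integration by parts. It applies the divergence theorem on the closed sphere to $y\mapsto|\Phi(y)-\Phi(x)|^{-(n-1+s)}$ to obtain \eqref{526-1}. That identity expresses $(\nabla_{\tau}\Phi(x)^{T}\nabla_{\tau}\Phi(x))\int\frac{y-x}{|\Phi(y)-\Phi(x)|^{n+1+s}}\,d\mathcal{H}^{n}_{y}$ as $\sum_{i}\int N_{i}K\,d\mathcal{H}^{n}_{y}$ with $|N_{i}|\lesssim|y-x|^{1+s+\alpha}$. Lemma \ref{518-1} then applies, and the metric matrix is inverted.

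In your framework the substitute would be a parity argument. Write $G_{\rho}(y,x)=(\rho(x)^{2}+(\nabla_{\tau}\rho(x)\cdot\omega)^{2})^{-\frac{n+1+s}{2}}+O(|y-x|^{s+\alpha})$ with $\omega=\frac{y-x}{|y-x|}$. The leading term is even in $\omega$, so it cancels against $y-x$ up to curvature terms. The remainder still needs a H\"older estimate of the type in Lemma \ref{518-1}, and the same is needed for part (ii) with constant $C\|w\|_{C^{1+s+\alpha}}$.

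Your key estimate $|q_{\rho}(y,x)-q_{\rho}(y,z)|\leq C|x-z|^{s+\alpha}$ uniformly in $y$ is also false. To leading order $q_{\rho}(y,x)=\nabla_{\tau}\rho(x)\cdot\omega$ depends on the direction. For $|y-x|\ll|x-z|$ the directions from $x$ and from $z$ to $y$ differ by $O(1)$, so the geodesic direction is not a smooth factor at the diagonal. What is true is the off-diagonal bound $|q_{\rho}(y,x)-q_{\rho}(y,z)|\leq C|x-z|/|y-x|$ for $|y-x|\geq2|x-z|$. This is the kind of restricted condition that appears in Lemma \ref{518-1}.

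Second, the boundary difficulty you anticipate is not the one that matters, and extending $\rho$ does not remove it. In (iii) the integral runs over $\Sigma=\mathbb{S}^{n}_{+}$ however $\rho$ is extended, and near $\partial\mathbb{S}^{n}_{+}$ the domain is one-sided, so the cancellation fails. Take $\rho\equiv1$. The full-sphere integral $\int_{\mathbb{S}^{n}}\frac{y-x}{|y-x|^{n+1+s}}\,d\mathcal{H}^{n}_{y}$ equals $-c_{0}x$, so
$\psi(x)\cdot e_{n+1}=-c_{0}x_{n+1}+\int_{\mathbb{S}^{n}_{-}}\frac{x_{n+1}-y_{n+1}}{|y-x|^{n+1+s}}\,d\mathcal{H}^{n}_{y}\geq c\,d^{-s}-C$, where $d=\mathrm{dist}(x,\partial\mathbb{S}^{n}_{+})$. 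Hence $\psi$ is unbounded on the hemisphere.

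The paper's proof has the same hole. It shows H\"older continuity of the full-sphere integral for the reflected $\tilde{\rho}$ and then simply asserts that the hemisphere integral inherits it. Moreover, the even reflection is only Lipschitz across the equator when $\partial_{\eta}\rho\neq0$. The capillary condition forces $\partial_{\eta}\rho\neq0$ for $\theta\neq\frac{\pi}{2}$, so $N_{2}$ and $N_{3}$ are not controlled there either.

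As stated, with $\Sigma=\mathbb{S}^{n}_{+}$, part (i) is false (take $\rho\equiv1$). A correct version needs either a closed surface (e.g.\ a genuine $C^{1+s+\alpha}$ extension together with full-sphere integrals) or a boundary-adapted replacement for condition (iii).
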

\begin{proof}
Claim (i): We recall that
\[\Phi_{\rho}(x)=\rho(x)x,\
K_{\rho}(y,x):=\frac{1}{|\rho(y)y-\rho(x)x|^{n+1+s}},\ x,y\in\mathbb{S}^{n}_{+}.\]
It follows from the assumption $\rho\in C^{1+s+\alpha}(\mathbb{S}^{n}_{+})$, then there exist constants
$c,C>0$ such that
%$x\in\mathbb{S}_{+}^{n}$​,
$0<c<\rho<C$ and $\|\nabla\rho\|_{C^{0}(\mathbb{S}_{+}^{n})}\leq C$,
\begin{align}\label{526-2}
|\Phi_{\rho}(y)-\Phi_{\rho}(x)|&\leq|\rho(y)||y-x|+|\rho(y)-\rho(x)||x|\notag\\
&\leq C|y-x|+\|\nabla\rho\|_{C^{0}(\mathbb{S}_{+}^{n})}|y-x|\notag\\
&=C_{\rho}|y-x|
\end{align}
and
\begin{align*}
|\Phi_{\rho}(y)-\Phi_{\rho}(x)|&\geq|\rho(y)||y-x|-|\rho(y)-\rho(x)||x|\notag\\
&\geq C|y-x|-\|\nabla\rho\|_{C^{0}(\mathbb{S}_{+}^{n})}|y-x|\notag\\
&=C'|y-x|.
\end{align*}
In other words
\begin{align*}
|\nabla_{x}K_{\rho}(y,x)|&=|-(n+1+s)\frac{\rho(y)y-\rho(x)x}{|\rho(y)y-\rho(x)x|^{n+3+s}}\nabla_{x}\Phi_{\rho}(x)^{T}|\notag\\
&\leq\frac{C_{3}}{|\rho(y)y-\rho(x)x|^{n+2+s}}.
\end{align*}
$K_{\rho}$ satisfies conditions (i) and (ii) of Definition \ref{D1}. Therefore, we only need to verify that $K_{\rho}$ satisfies the third condition of Definition \ref{D1}. The following proof idea is derived from \cite{VD20}, The idea is to use integration parts in order to write $\int_{\mathbb{S}^{n}_{+}}\frac{y-x}
{|\Phi(y)-\Phi(x)|^{n+1+s}}d\mathcal{H}_{y}^{n}$ as a nonsingular integral.

To shorten the notation, we write $\Phi_{\rho}(x)$ as $\Phi(x)$ and notice that the tangential gradient of $y\mapsto|\Phi(y)-\Phi(x)|^{-n+1-s}$ is
\begin{align*}
\nabla_{\tau(y)}|\Phi(y)-\Phi(x)|^{-n+1-s}=&-(n-1+s)\frac{\nabla_{\tau}\Phi(y)^{T}(\Phi(y)-\Phi(x))}
{|\Phi(y)-\Phi(x)|^{n+1+s}}\notag\\
=&-(n-1+s)\frac{\nabla_{\tau}\Phi(x)^{T}(\Phi(y)-\Phi(x))}
{|\Phi(y)-\Phi(x)|^{n+1+s}}\notag\\
&-(n-1+s)\frac{(\nabla_{\tau}\Phi(y)-\nabla_{\tau}\Phi(x))^{T}(\Phi(y)-\Phi(x))}
{|\Phi(y)-\Phi(x)|^{n+1+s}},
\end{align*}
where $\nabla_{\tau}\Phi(x)^{T}$ denotes the transpose of $\nabla_{\tau}\Phi(x)$ to ensure the dimensions calculated match each other.

To define $\Phi(x)$ on $\mathbb{S}^{n}$, we need to extend $\rho(\cdot,t):\mathbb{S}^{n}_{+}\rightarrow\mathbb{R}$ to a function defined on $\mathbb{S}^{n}$ while preserving a certain degree of regularity. We denote the reflection of $x\in\mathbb{S}^{n}_{-}$ with respect to the hyperplane $\partial\mathbb{S}^{n}_{+}$ as
\begin{equation*}
x^{\star}=(x_{1},\ldots,x_{n},-x_{n+1})\in\mathbb{S}^{n}_{+}.
\end{equation*}
We define the function $\tilde{\rho}(x)$ as follows
\begin{equation*}
\tilde{\rho}(x)= \begin{cases}
\rho(x),& if\ x\in\mathbb{S}^{n}_{+};\\
\rho(x^{\star}),& if\ x\in\mathbb{S}^{n}_{-}.
\end{cases}
\end{equation*}
In this way, we define $\Phi_{\tilde{\rho}}(x):=\tilde{\rho}(x)x$ with $x\in\mathbb{S}^{n}$.

Furthermore, by using the divergence theorem on the sphere, we can obtain the following equation,
\[\int_{\mathbb{S}^{n}}\nabla_{\tau(y)}|\Phi_{\tilde{\rho}}(y)-\Phi_{\tilde{\rho}}(x)|^{-n+1-s}d\mathcal{H}_{y}^{n}
=\int_{\mathbb{S}^{n}}\frac{H_{\mathbb{S}^{n}(y)}\nu(y)}{|\Phi_{\tilde{\rho}}(y)-\Phi_{\tilde{\rho}}(x)|^{n-1+s}}d\mathcal{H}_{y}^{n}.\]

In other words,
\begin{align}\label{525-1}
&\nabla_{\tau}\Phi_{\tilde{\rho}}(x)^{T}\int_{\mathbb{S}^{n}}\frac{\Phi_{\tilde{\rho}}(y)-\Phi_{\tilde{\rho}}(x)}
{|\Phi_{\tilde{\rho}}(y)-\Phi_{\tilde{\rho}}(x)|^{n+1+s}}d\mathcal{H}_{y}^{n}\notag\\
=&-\frac{1}{(n-1+s)}\int_{\mathbb{S}^{n}}\frac{H_{\mathbb{S}^{n}(y)}\nu(y)}{|\Phi_{\tilde{\rho}}(y)-\Phi_{\tilde{\rho}}(x)|^{n-1+s}}d\mathcal{H}_{y}^{n}\notag\\
&-\int_{\mathbb{S}^{n}}\frac{(\nabla_{\tau}\Phi_{\tilde{\rho}}(y)-\nabla_{\tau}\Phi_{\tilde{\rho}}(x))^{T}(\Phi_{\tilde{\rho}}(y)-\Phi_{\tilde{\rho}}(x))}
{|\Phi_{\tilde{\rho}}(y)-\Phi_{\tilde{\rho}}(x)|^{n+1+s}}d\mathcal{H}_{y}^{n}.
\end{align}
The left-hand side of the above equation can be rearranged as
\begin{align}\label{525-2}
&\nabla_{\tau}\Phi_{\tilde{\rho}}(x)^{T}\int_{\mathbb{S}^{n}}\frac{\Phi_{\tilde{\rho}}(y)-\Phi_{\tilde{\rho}}(x)}
{|\Phi_{\tilde{\rho}}(y)-\Phi_{\tilde{\rho}}(x)|^{n+1+s}}d\mathcal{H}_{y}^{n}\notag\\
=&\{\nabla_{\tau}\Phi_{\tilde{\rho}}(x)^{T}\nabla_{\tau}\Phi_{\tilde{\rho}}(x)\}\int_{\mathbb{S}^{n}}\frac{y-x}
{|\Phi_{\tilde{\rho}}(y)-\Phi_{\tilde{\rho}}(x)|^{n+1+s}}d\mathcal{H}_{y}^{n}\notag\\
&+\nabla_{\tau}\Phi_{\tilde{\rho}}(x)^{T}\int_{\mathbb{S}^{n}}\frac{\Phi_{\tilde{\rho}}(y)-\Phi_{\tilde{\rho}}(x)-\nabla_{\tau}\Phi_{\tilde{\rho}}(x)(y-x)}
{|\Phi_{\tilde{\rho}}(y)-\Phi_{\tilde{\rho}}(x)|^{n+1+s}}d\mathcal{H}_{y}^{n}.
\end{align}
$\int_{\mathbb{S}^{n}}\frac{y-x}
{|\Phi_{\tilde{\rho}}(y)-\Phi_{\tilde{\rho}}(x)|^{n+1+s}}d\mathcal{H}_{y}^{n}$
is the one we are concerned about. From the equation \eqref{525-1}, \eqref{525-2}, we can obtain that
\begin{align}\label{526-1}
&\{\nabla_{\tau}\Phi_{\tilde{\rho}}(x)^{T}\nabla_{\tau}\Phi_{\tilde{\rho}}(x)\}\int_{\mathbb{S}^{n}}\frac{y-x}
{|\Phi_{\tilde{\rho}}(y)-\Phi_{\tilde{\rho}}(x)|^{n+1+s}}d\mathcal{H}_{y}^{n}\notag\\
=&-\frac{1}{(n-1+s)}\int_{\mathbb{S}^{n}}\frac{H_{\mathbb{S}^{n}}(y)\nu(y)|\Phi_{\tilde{\rho}}(y)-\Phi_{\tilde{\rho}}(x)|^{2}}{|\Phi_{\tilde{\rho}}(y)-\Phi_{\tilde{\rho}}(x)|^{n+1+s}}d\mathcal{H}_{y}^{n}\notag\\
&-\int_{\mathbb{S}^{n}}\frac{(\nabla_{\tau}\Phi_{\tilde{\rho}}(y)-\nabla_{\tau}\Phi_{\tilde{\rho}}(x))^{T}(\Phi_{\tilde{\rho}}(y)-\Phi_{\tilde{\rho}}(x))}
{|\Phi_{\tilde{\rho}}(y)-\Phi_{\tilde{\rho}}(x)|^{n+1+s}}d\mathcal{H}_{y}^{n}\notag\\
&-\nabla_{\tau}\Phi_{\tilde{\rho}}(x)^{T}\int_{\mathbb{S}^{n}}\frac{\Phi_{\tilde{\rho}}(y)-\Phi_{\tilde{\rho}}(x)-\nabla_{\tau}\Phi_{\tilde{\rho}}(x)(y-x)}
{|\Phi_{\tilde{\rho}}(y)-\Phi_{\tilde{\rho}}(x)|^{n+1+s}}d\mathcal{H}_{y}^{n}\notag\\
:=&\int_{\mathbb{S}^{n}}N_{1}K_{\tilde{\rho}}(y,x)d\mathcal{H}_{y}^{n}+\int_{\mathbb{S}^{n}}N_{2}K_{\tilde{\rho}}(y,x)d\mathcal{H}_{y}^{n}+\int_{\mathbb{S}^{n}}N_{3}K_{\tilde{\rho}}(y,x)d\mathcal{H}_{y}^{n}.
\end{align}
Here, $\nabla_{\tau}\Phi_{\tilde{\rho}}(x)^{T}\nabla_{\tau}\Phi_{\tilde{\rho}}(x)$ denotes the metric tensor matrix of $\partial E$, the matrix is invertible.  Therefore, we only need to verify that $N_{1}$, $N_{2}$, and $N_{3}$ satisfy the conditions of Lemma \ref{518-1}.

For $N_{1}(y,x)$,
\begin{align*}
|N_{1}(y,x)|&=|-\frac{1}{(n-1+s)}H_{\mathbb{S}^{n}}(y)\nu(y)|\Phi_{\tilde{\rho}}(y)-\Phi_{\tilde{\rho}}(x)|^{2},\notag\\
&\leq C|\Phi_{\tilde{\rho}}(y)-\Phi_{\tilde{\rho}}(x)|^{2}.
\end{align*}
By $\tilde{\rho}\in C^{1+s+\alpha}(\mathbb{S}^{n})$, it is straightforward to check that $N_{1}(y,x)$ satisfies the assumptions of Lemma \ref{518-1} with $\kappa_{0}<C$. Moreover we have that $N_{2}(y,x)$, $N_{3}(y,x)$ also satisfies the assumptions of Lemma \ref{518-1}.  This shows that the right-hand side of \eqref{526-1} is H$\ddot{o}$lder continuous, and further implies that $\int_{\mathbb{S}^{n}}\frac{y-x}
{|\Phi_{\tilde{\rho}}(y)-\Phi_{\tilde{\rho}}(x)|^{n+1+s}}d\mathcal{H}_{y}^{n}$ is H$\ddot{o}$lder continuous, thus, $\int_{\mathbb{S}^{n}_{+}}\frac{y-x}
{|\Phi(y)-\Phi(x)|^{n+1+s}}d\mathcal{H}_{y}^{n}$  is also H$\ddot{o}$lder continuous. This proves the claim (i).

Claim (ii): We consider
\begin{align*}
\frac{d}{d\xi}|_{\xi=0}K_{\rho+\xi w}&=-(n+1+s)\frac{\Phi(y)-\Phi(x)}{|\Phi(y)-\Phi(x)|^{n+3+s}}\cdot(w(y)y-w(x)x)\notag\\
&:=\partial_{w}K_{w}.
\end{align*}
By $\rho,w\in C^{1+s+\alpha}(\mathbb{S}^{n}_{+})$ and \eqref{526-2}, it is straightforward to check that $\partial_{w}K_{w}$ satisfies the assumptions of Definition \ref{D1} with $\kappa_{0}<C$. Therefore, we only need to verify that $\partial_{w}K_{w}$ satisfies the third condition of Definition \ref{D1}.

Following the same derivation process as for the equation \eqref{526-1}, we also need to extend $\rho(\cdot,t),\ w:\mathbb{S}^{n}_{+}\rightarrow\mathbb{R}$ to $\tilde{\rho},\ \tilde{w}$ defined on $\mathbb{S}^{n}$ while preserving a certain degree of regularity, we can obtain that
\begin{align*}
&\{\nabla_{\tau}\Phi_{\tilde{\rho}+\xi \tilde{w}}(x)^{T}\nabla_{\tau}\Phi_{\tilde{\rho}+\xi \tilde{w}}(x)\}\int_{\mathbb{S}^{n}}\frac{y-x}
{|\Phi_{\tilde{\rho}+\xi \tilde{w}}(y)-\Phi_{\tilde{\rho}+\xi \tilde{w}}(x)|^{n+1+s}}d\mathcal{H}_{y}^{n}\notag\\
=&-\frac{1}{(n-1+s)}\int_{\mathbb{S}^{n}}\frac{H_{\mathbb{S}^{n}}(y)\nu(y)|\Phi_{\tilde{\rho}+\xi \tilde{w}}(y)-\Phi_{\tilde{\rho}+\xi \tilde{w}}(x)|^{2}}{|\Phi_{\tilde{\rho}+\xi \tilde{w}}(y)-\Phi_{\tilde{\rho}+\xi \tilde{w}}(x)|^{n+1+s}}d\mathcal{H}_{y}^{n}\notag\\
&-\int_{\mathbb{S}^{n}}\frac{(\nabla_{\tau}\Phi_{\tilde{\rho}+\xi \tilde{w}}(y)-\nabla_{\tau}\Phi_{\tilde{\rho}+\xi \tilde{w}}(x))^{T}(\Phi_{\tilde{\rho}+\xi \tilde{w}}(y)-\Phi_{\tilde{\rho}+\xi \tilde{w}}(x))}
{|\Phi_{\tilde{\rho}+\xi \tilde{w}}(y)-\Phi_{\tilde{\rho}+\xi \tilde{w}}(x)|^{n+1+s}}d\mathcal{H}_{y}^{n}\notag\\
&-\nabla_{\tau}\Phi_{\tilde{\rho}+\xi \tilde{w}}(x)^{T}\int_{\mathbb{S}^{n}}\frac{\Phi_{\tilde{\rho}+\xi \tilde{w}}(y)-\Phi_{\tilde{\rho}+\xi \tilde{w}}(x)-\nabla_{\tau}\Phi_{\tilde{\rho}+\xi \tilde{w}}(x)(y-x)}
{|\Phi_{\tilde{\rho}+\xi \tilde{w}}(y)-\Phi_{\tilde{\rho}+\xi \tilde{w}}(x)|^{n+1+s}}d\mathcal{H}_{y}^{n}\notag\\
:=&\int_{\mathbb{S}^{n}}N_{1}K_{\tilde{\rho}+\xi \tilde{w}}(y,x)d\mathcal{H}_{y}^{n}+\int_{\mathbb{S}^{n}}N_{2}K_{\tilde{\rho}+\xi \tilde{w}}(y,x)d\mathcal{H}_{y}^{n}+\int_{\mathbb{S}^{n}}N_{3}K_{\tilde{\rho}+\xi \tilde{w}}(y,x)d\mathcal{H}_{y}^{n}.
\end{align*}
By differentiating we have
\begin{align*}
&\frac{d}{d\xi}|_{\xi=0}\{\nabla_{\tau}\Phi_{\tilde{\rho}+\xi \tilde{w}}(x)^{T}\nabla_{\tau}\Phi_{\tilde{\rho}+\xi \tilde{w}}(x)\}\int_{\mathbb{S}^{n}}\frac{y-x}
{|\Phi_{\tilde{\rho}+\xi \tilde{w}}(y)-\Phi_{\tilde{\rho}+\xi \tilde{w}}(x)|^{n+1+s}}d\mathcal{H}_{y}^{n}\notag\\
=&\int_{\mathbb{S}^{n}}\frac{d}{d\xi}|_{\xi=0}\{N_{1}+N_{2}+N_{3}\}K_{\tilde{\rho}+\xi \tilde{w}}(y,x)d\mathcal{H}_{y}^{n}+\int_{\mathbb{S}^{n}}\{N_{1}+N_{2}+N_{3}\}\partial_{\tilde{w}}K_{\tilde{w}}d\mathcal{H}_{y}^{n}
\end{align*}
where
\begin{align*}
\frac{d}{d\xi}|_{\xi=0}\{N_{1}+N_{2}+N_{3}\}&=-\frac{2}{(n-1+s)}H_{\mathbb{S}^{n}}(y)\nu(y)(\Phi_{\tilde{\rho}}(y)-\Phi_{\tilde{\rho}}(x))(\tilde{w}(y)y-\tilde{w}(x)x)\notag\\
&-(\nabla_{\tau}(\tilde{w}(y)y))^{T}(\Phi_{\tilde{\rho}}(y)-\Phi_{\tilde{\rho}}(x))-\nabla_{\tau}\Phi_{\tilde{\rho}}(y)(\tilde{w}(y)y-\tilde{w}(x)x)\notag\\
&+\{\nabla_{\tau}(\tilde{w}(x)x)^{T}\nabla_{\tau}\Phi_{\tilde{\rho}}(x)+\nabla_{\tau}\Phi_{\tilde{\rho}}(x)^{T}\nabla_{\tau}(\tilde{w}(x)x)\}(y-x).
\end{align*}
Since $\|\tilde{w}(x)x\|_{C^{1+s+\alpha}(\mathbb{S}^{n})},\ \|\nabla_{\tau}(\tilde{w}(x)x)\|_{C^{s+\alpha}(\mathbb{S}^{n})}\leq C\|\tilde{w}\|_{C^{1+s+\alpha}(\mathbb{S}^{n})}$, we can use Lemma \ref{14-1} to deduce
$$\|\int_ {\mathbb{S}^{n}}\frac{d}{d\xi}|_{\xi=0}\{N_{1}+N_{2}+N_{3}\}K_{\tilde{\rho}+\xi \tilde{w}}(y,x)d\mathcal{H}_{y}^{n}\|_{C^{\alpha}(\mathbb{S}^{n})}\leq C_{\rho}\|\tilde{w}\|_{C^{1+s+\alpha}(\mathbb{S}^{n})}.$$

By claim (i), we already know that $N_{1}(y,x)$, $N_{2}(y,x)$ and $N_{3}(y,x)$ also satisfies the assumptions of Lemma \ref{518-1}.
%By $\tilde{\rho},\tilde{w}\in C^{1+s+\alpha}(\mathbb{S}^{n})$ and \eqref{526-2}, it is straightforward to check that $\partial_{w}K_{w}$ satisfies the assumptions of lemma \ref{518-1}.
Therefore, $\|\int_{\mathbb{S}^{n}}\{N_{1}+N_{2}+N_{3}\}\partial_{\tilde{w}}K_{\tilde{w}}d\mathcal{H}_{y}^{n}\|_{C^{\alpha}(\mathbb{S}^{n})}<C.$ It can be inferred therefrom that $\|\int_{\mathbb{S}^{n}}(y-x)\partial_{\tilde{w}}K_{\tilde{w}}\|_{C^{\alpha}(\mathbb{S}^{n})}\leq C_{\rho}\|\tilde{w}\|_{C^{1+s+\alpha}(\mathbb{S}^{n})}$, and further implies that $\|\int_{\mathbb{S}^{n}_{+}}(y-x)\partial_{w}K_{w}\|_{C^{\alpha}(\mathbb{S}^{n}_{+})}\leq C_{\rho}\|w\|_{C^{1+s+\alpha}(\mathbb{S}^{n}_{+})}$.
This proves the claim (ii).
\end{proof}

\begin{lem}\label{919-1}
Assume $\rho\in C^{1+s+\alpha}(\mathbb{S}_{+}^{n})$, then for $C>0$ it holds
\[\|R_{1,\rho}(x)\|_{C^{\alpha}(\mathbb{S}_{+}^{n})}\leq C \|\rho\|_{C^{1+s+\alpha}(\mathbb{S}_{+}^{n})}\ \ and\ \ \|R_{2,\rho}(x)\|_{C^{\alpha}(\mathbb{S}_{+}^{n})}\leq C_{\xi,\rho}.\]
\end{lem}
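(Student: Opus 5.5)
We estimate $R_{1,\rho}$ and $R_{2,\rho}$ term by term. The main tool is the kernel machinery set up before this lemma: Definition \ref{D1}, Lemma \ref{518-1}, Lemma \ref{14-1}, together with Lemma \ref{4041}. The plan has three steps: rewrite the $\xi$-derivative inside the integrands as a combination of admissible kernels, apply the Hölder estimates for operators built on such kernels, and then integrate in $\xi$ and $t'$ over $[0,1]$. Since all the bounds will be uniform in $\xi\in[0,1]$, this last step costs nothing.

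\emph{Step 1: the kernel.} First compute
\[
\frac{d}{d\xi}\Bigl\{[1+\xi(\rho(y)-1)]^{n}K_{\xi\rho}(y,x)\Bigr\}
= n[1+\xi(\rho(y)-1)]^{n-1}(\rho(y)-1)K_{\xi\rho}(y,x)+[1+\xi(\rho(y)-1)]^{n}\,\partial_{\xi}K_{\xi\rho}(y,x).
\]
Here $\partial_\xi K_{\xi\rho}=\frac{d}{d\eta}|_{\eta=0}K_{\xi\rho+\eta(\rho-1)}$. Apply Lemma \ref{4041}(i) and (ii) with $\xi\rho+(1-\xi)$ in place of $\rho$ and $w=\rho-1$. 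This shows that $K_{\xi\rho}\in\mathcal S_{\kappa_1}$ and $\partial_\xi K_{\xi\rho}\in\mathcal S_{\kappa_2}$, with $\kappa_2\le C\|\rho-1\|_{C^{1+s+\alpha}}\le C\|\rho\|_{C^{1+s+\alpha}}$. To invoke the lemma we need $\xi\rho+(1-\xi)$ to be positive and bounded in $C^{1+s+\alpha}$; this holds uniformly in $\xi$ because $c<\rho<C$. The prefactors $[1+\xi(\rho(y)-1)]^{n}$ and $(\rho(y)-1)$ are $C^{1+s+\alpha}$ functions of $y$, so multiplying by them keeps us inside the admissible class, with a constant depending on $\rho$.

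\emph{Step 2: $R_{1,\rho}$.} The integrand has the form $(\rho(y)-\rho(x))\tilde K(y,x)$ with $\tilde K$ as in Step 1. Applying Lemma \ref{14-1}/Lemma \ref{518-1} to the operator $u\mapsto\int(u(y)-u(x))\tilde K\,d\mathcal H^n_y$ gives a $C^{\alpha}$ bound of order $\|\rho\|_{C^{1+s+\alpha}}$. The crucial observation is that we may estimate the \emph{difference} $\rho(y)-\rho(x)$, rather than $\rho$ itself, by $\|\rho\|_{C^{1+s+\alpha}}$. Both pieces of $\tilde K$ contribute at most a constant factor, which yields the linear bound claimed in the lemma. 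Over the half-sphere, boundary effects are dealt with exactly as in Lemma \ref{4041}: reflect $\rho$ to $\tilde\rho$ on $\mathbb S^n$, estimate there, and restrict.

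\emph{Step 3: $R_{2,\rho}$.} There are three terms. The first, $\int|y-x|^{-(n-1+s)}d\mathcal H^n_y$, is a fixed function of $x$. Its Hölder continuity follows from Lemma \ref{521-1}: its kernel is less singular than $|x-y|^{-n+s}$, and a standard splitting near and far from the singularity bounds the difference quotient. The second term has integrand $|y-x|^{2}\tilde K$, which is only of order $|y-x|^{1-n-s}$, so it is integrable and Lipschitz in $x$ up to an integrable error; Lemma \ref{518-1} with $N=|y-x|^2$ covers it. The third term is the genuinely singular one, $\int(y-x)\cdot\nabla_\tau\rho(y)[\dots]^{n-1}K_{t'\rho}\,d\mathcal H^n_y$. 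Write $\nabla_\tau\rho(y)=\nabla_\tau\rho(x)+(\nabla_\tau\rho(y)-\nabla_\tau\rho(x))$, and treat the prefactor $[\dots]^{n-1}$ the same way. The difference part gains a factor $|y-x|^{s+\alpha}$ and is handled by Lemma \ref{518-1}. The frozen part is $\nabla_\tau\rho(x)\cdot\int(y-x)K_{t'\rho}$, whose Hölder continuity is precisely condition (iii) of Definition \ref{D1}, already established in Lemma \ref{4041}(i) through the integration-by-parts identity \eqref{526-1}.

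I expect this frozen odd-kernel term to be the main obstacle. Its Hölder norm depends nonlinearly on $\rho$, which is why the lemma only claims a constant $C_{\xi,\rho}$ for $R_{2,\rho}$ and no linear bound. One also has to check that the reflection $\tilde\rho$ is regular enough across the equator for \eqref{526-1} to apply.
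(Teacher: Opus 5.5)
You follow the paper's route for $R_{1,\rho}$: product rule in $\xi$, Lemma \ref{4041} for $K_{\xi\rho}$ and $\partial_\xi K_{\xi\rho}$, then Lemma \ref{14-1}, i.e.\ \eqref{906-1} with $h=\rho$. Reading $K_{\xi\rho}$ as the kernel of $1+\xi(\rho-1)$ is the right way to invoke Lemma \ref{4041}. For $R_{2,\rho}$ you are more thorough than the paper. The paper bounds the first term only in $C^{0}$, treats the second via $|y-x|^{2}=-2x\cdot(y-x)$ and \eqref{906-1} with $h(y)=y$, and never addresses the third. Your use of Lemma \ref{518-1} with $F=|y-x|^{2}$ is fine, since it needs no cancellation. (Minor: for $s>\frac12$ the kernel $|y-x|^{-(n-1+s)}$ is more singular than $|y-x|^{-(n-s)}$, so Lemma \ref{521-1} does not literally apply. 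The first term is still $C^{\alpha}$ because $n-1+s<n$ and $\alpha<1-s$.)

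The genuine gap is the step you delegate to Lemma \ref{4041}: condition (iii) of Definition \ref{D1} on $\Sigma=\mathbb{S}^{n}_{+}$. You need it both for Lemma \ref{14-1} in $R_{1,\rho}$ and for your frozen term $t'\nabla_\tau\rho(x)[\cdots]^{n-1}\cdot\int_{\mathbb{S}^{n}_{+}}(y-x)K_{t'\rho}(y,x)\,d\mathcal{H}^{n}_{y}$. This condition is false near the equator. ``Reflect to $\mathbb{S}^{n}$, estimate, restrict'' cannot produce it, because an integral over $\mathbb{S}^{n}_{+}$ is not the restriction of one over $\mathbb{S}^{n}$.

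To see the failure, take $x$ at distance $d$ from $\partial\mathbb{S}^{n}_{+}$, use normal coordinates $z$ at $x$, and freeze the kernel to $|D\Phi_{t'\rho}(x)z|^{-n-1-s}$, which is even in $z$. The symmetric band $\{|z_{n}|<d\}$ cancels. What remains is $\int_{z_{n}>d}z\,|D\Phi_{t'\rho}(x)z|^{-n-1-s}dz$, which by scaling equals $d^{-s}$ times a fixed vector. A linear change of variables shows that its pairing with $\nabla_\tau\rho(x)$ is a nonzero multiple of the component of $\nabla_\tau\rho(x)$ normal to the equator. All other contributions are $O(1)$.

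So your frozen term equals $c\,\partial_\eta\rho\,d^{-s}+O(1)$ with $c\neq0$. Hence $R_{2,\rho}$ is unbounded near $\partial\mathbb{S}^{n}_{+}$ as soon as $\partial_\eta\rho\neq0$, and the capillary condition forces exactly this when $\theta\neq\frac{\pi}{2}$.

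The same mechanism breaks $R_{1,\rho}$. For $\rho=\lambda+\epsilon u$ with constant $\lambda\neq1$, the part of $R_{1,\rho}$ linear in $\epsilon$ is a nonzero multiple of $\epsilon\,\Delta^{\frac{1+s}{2}}u$, where the integration is over $\mathbb{S}^{n}_{+}$ only. That quantity is of order $|\partial_\eta u|\,d^{-s}$.

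Independently, the even reflection $\tilde\rho(x)=\rho(x^{\star})$ has a corner along the equator whenever $\partial_\eta\rho\neq0$. Then $\tilde\rho\notin C^{1+s+\alpha}(\mathbb{S}^{n})$, and \eqref{526-1} is unavailable even on the whole sphere, so the worry you raise at the end is justified.

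This step therefore cannot be repaired: both bounds fail unless $\partial_\eta\rho=0$ on $\partial\mathbb{S}^{n}_{+}$. The paper's proof rests on the same Lemma \ref{4041} and the same reflection, so it has the same hole. Your explicit treatment of the third term of $R_{2,\rho}$ merely exposes it.
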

\begin{proof}
For $R_{1,\rho}(x)$, we already know that
$$R_{1,\rho}(x):=2\int_{0}^{1}\int_{0}^{t'}\int_{\mathbb{S}^{n}_{+}}(\rho(y)-\rho(x))\frac{d}{d\xi}\{[1+\xi(\rho(y)-1)]^{n}K_{\xi \rho}(y,x)\}d\mathcal{H}_{y}^{n}d\xi dt'.$$
Let
\[\varphi(x):=\int_{\mathbb{S}_{+}^{n}}(h(y)-h(x))\frac{d}{d\xi}\{[1+\xi (\rho(y)-1)]^{n}K_{\xi \rho}(y,x)\}d\mathcal{H}_{y}^{n},\ \xi\in[0,1],\]
we claim that
\begin{equation}\label{906-1}
\|\varphi(x)\|_{C^{\alpha}(\mathbb{S}_{+}^{n})}\leq C\|h(x)\|_{C^{1+s+\alpha}(\mathbb{S}_{+}^{n})}.
\end{equation}
Then $\|R_{1,\rho}(x)\|_{C^{\alpha}(\mathbb{S}_{+}^{n})}$ follows from \eqref{906-1} by choosing $h(y)=\rho(y)$. $\varphi(x)$ can also be rewritten as
\begin{align*}
\varphi(x)=&\int_{\mathbb{S}_{+}^{n}}(h(y)-h(x))\frac{d}{d\xi}\{[1+\xi (\rho(y)-1)]^{n}\}K_{\xi \rho}(y,x)d\mathcal{H}_{y}^{n}\notag \\
&+\int_{\mathbb{S}_{+}^{n}}(h(y)-h(x))[1+\xi (\rho(y)-1)]^{n}\frac{d}{d\xi}\{K_{\xi \rho}(y,x)\}d\mathcal{H}_{y}^{n}.
\end{align*}
By Lemma \ref{4041}, we already know that $K_{\xi\rho}\in\mathcal{S}_{\kappa_{1}}$, with $\kappa_{1}\leq C$ and the kernel $\frac{d}{d\xi}|_{\xi=0}K_{\rho+\xi w}$ belongs to the class $\mathcal{S}_{\kappa_{2}}$, with
$\kappa_{2}\leq C\|w\|_{C^{1+s+\alpha}(\mathbb{S}_{+}^{n})}$ for all $\xi\in[0,1]$. In other words, $\rho\in C^{1+s+\alpha}(\mathbb{S}_{+}^{n})$, therefore it holds $\|\frac{d}{d\xi}\{[1+\xi (\rho(y)-1)]^{n}\}\|_{C^{s+\alpha}(\mathbb{S}_{+}^{n})},\ \|[1+\xi (\rho(y)-1)]^{n}\|_{C^{s+\alpha}(\mathbb{S}_{+}^{n})}\leq C_{\xi,\rho}\|\rho\|_{C^{1+s+\alpha}(\mathbb{S}_{+}^{n})}<C$.
From Lemma \ref{14-1}, we can directly obtain $\|\varphi(x)\|_{C^{\alpha}(\mathbb{S}_{+}^{n})}\le C\|h(x)\|_{C^{1+s+\alpha}(\mathbb{S}_{+}^{n})}$.

For $R_{2,\rho}$,
\begin{equation}\label{603-1}
\begin{split}
R_{2,\rho}(x):=&\int_{\mathbb{S}^{n}_{+}}\frac{1}{|y-x|^{n-1+s}}d\mathcal{H}_{y}^{n}\notag\\
&+\int_{0}^{1}\int_{0}^{t'}\int_{\mathbb{S}^{n}_{+}}|y-x|^{2}\frac{d}{d\xi}\{[1+\xi(\rho(y)-1)]^{n}K_{\xi \rho}(y,x)\}d\mathcal{H}_{y}^{n}d\xi dt'\notag\\
&-2\int_{0}^{1}\int_{\mathbb{S}^{n}_{+}}(y-x)\cdot t'\nabla_{\tau}\rho(y)[1+t'(\rho(y)-1)]^{n-1}K_{t'\rho}(y,x)d\mathcal{H}_{y}^{n} dt'.
\end{split}\end{equation}
By Lemma \ref{521-1}, $\int_{\mathbb{S}^{n}_{+}}\frac{1}{|y-x|^{n-1+s}}d\mathcal{H}_{y}^{n}$ is bounded.
\begin{align*}
&\int_{0}^{1}\int_{0}^{t'}\int_{\mathbb{S}^{n}_{+}}|y-x|^{2}\frac{d}{d\xi}\{[1+\xi(\rho(y)-1)]^{n}K_{\xi \rho}(y,x)\}d\mathcal{H}_{y}^{n}d\xi dt'\notag\\
=&-2x\int_{0}^{1}\int_{0}^{t'}\int_{\mathbb{S}^{n}_{+}}(y-x)\frac{d}{d\xi}\{[1+\xi(\rho(y)-1)]^{n}\}K_{\xi \rho}(y,x)d\mathcal{H}_{y}^{n}d\xi dt'\notag\\
&-2x\int_{0}^{1}\int_{0}^{t'}\int_{\mathbb{S}^{n}_{+}}(y-x)[1+\xi(\rho(y)-1)]^{n}\frac{d}{d\xi}\{K_{\xi \rho}(y,x)\}d\mathcal{H}_{y}^{n}d\xi dt'\notag\\
:=&R_{21,\rho}+R_{22,\rho}.
\end{align*}
Then $\|R_{21,\rho}(x)\|_{C^{\alpha}(\mathbb{S}_{+}^{n})}$ and $\|R_{22,\rho}(x)\|_{C^{\alpha}(\mathbb{S}_{+}^{n})}$ follows from \eqref{906-1} by choosing $h(y)=y$. Hence, $\|R_{2,\rho}(x)\|_{C^{\alpha}(\mathbb{S}_{+}^{n})}<C_{\xi,\rho}$.
\end{proof}
\begin{corr}\label{921-2}
Assume that $\rho,\ w\in C^{1+s+\alpha}(\mathbb{S}_{+}^{n})$, let $$\partial_{w}R_{1,\rho}(x)=\frac{d}{d\zeta}|_{\zeta=0}R_{1,\rho+\zeta w}(x),\ \partial_{w}R_{2,\rho}(x)=\frac{d}{d\zeta}|_{\zeta=0}R_{2,\rho+\zeta w}(x),$$
then
\begin{align}\label{921-1}
\|\partial_{w}R_{1,\rho}(x)\|_{C^{\alpha}(\mathbb{S}_{+}^{n})}&\leq C_{w,\zeta,\rho}\|w(x)\|_{C^{1+s+\alpha}(\mathbb{S}_{+}^{n})},\notag\\
\|\partial_{w}R_{2,\rho}(x)\|_{C^{\alpha}(\mathbb{S}_{+}^{n})}&\leq C_{w,\rho}.
\end{align}
\end{corr}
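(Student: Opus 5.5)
The plan is to differentiate the explicit formulas \eqref{302} and \eqref{303} in the direction $w$ under the integral signs, then estimate each resulting term with the same kernel machinery used in Lemma \ref{919-1}. The constants below may depend on $\|\rho\|_{C^{1+s+\alpha}(\mathbb{S}_{+}^{n})}$.

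\emph{First remainder.} Replacing $\rho$ by $\rho+\zeta w$ and differentiating at $\zeta=0$ gives, for each fixed $t'$ and $\xi$, three types of terms.
\begin{itemize}
\item[(a)] $(w(y)-w(x))\,\frac{d}{d\xi}\{[1+\xi(\rho(y)-1)]^{n}K_{\xi\rho}(y,x)\}$. This is exactly $\varphi$ with $h=w$, so \eqref{906-1} gives a bound $C\|w\|_{C^{1+s+\alpha}}$.
\item[(b)] $(\rho(y)-\rho(x))\,\frac{d}{d\xi}\big\{n\xi w(y)[1+\xi(\rho(y)-1)]^{n-1}K_{\xi\rho}\big\}$. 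Here the coefficient functions of $y$ (and their $\xi$-derivatives) are controlled in $C^{s+\alpha}$ by $C\|w\|_{C^{1+s+\alpha}}$, since $C^{1+s+\alpha}$ is an algebra embedded in $C^{s+\alpha}$.
\item[(c)] $(\rho(y)-\rho(x))\,\frac{d}{d\xi}\big\{[1+\xi(\rho(y)-1)]^{n}\,\partial_\zeta K_{\xi(\rho+\zeta w)}\big\}$. By Lemma \ref{4041}(ii), applied with $\xi\rho$ in place of $\rho$ and $\xi w$ in place of $w$, the kernel $\partial_\zeta K_{\xi(\rho+\zeta w)}$ lies in $\mathcal{S}_{\kappa_2}$ with $\kappa_2\le C\|w\|_{C^{1+s+\alpha}}$.
\end{itemize}

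The $\xi$-derivative of the kernel in (c) is a second mixed derivative, $\partial_\xi\partial_\zeta K$. It has the same structure as the first derivative, with two difference quotients of the form $(\Phi(y)-\Phi(x))\cdot(w(y)y-w(x)x)/|\Phi(y)-\Phi(x)|^{2}$ multiplying $K$. I would check that it also belongs to the class $\mathcal{S}$ with constant $\le C\|w\|$, using the reflected extensions $\tilde\rho,\tilde w$ and the integration-by-parts identity \eqref{526-1} exactly as in the proof of Lemma \ref{4041}(ii). Once (a)--(c) are in place, Lemma \ref{14-1} gives a $C^{\alpha}$ bound $\le C\|w\|_{C^{1+s+\alpha}}$ uniformly in $\xi,t'\in[0,1]$. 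Integrating in $\xi$ and $t'$ then yields the first line of \eqref{921-1}. Differentiation under the integral is justified by the same uniform bounds, which give dominated convergence of the difference quotients in $\zeta$.

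\emph{Second remainder.} The first term of \eqref{303} does not depend on $\rho$, so its derivative vanishes. For the middle term I would use the decomposition $R_{21,\rho}+R_{22,\rho}$ from Lemma \ref{919-1}, i.e.\ replace $|y-x|^2$ by $-2x\cdot(y-x)$ using $|x|=|y|=1$. Differentiating in $\zeta$ then produces terms of types (b) and (c) with $h(y)=y$, which are bounded by $C\|w\|_{C^{1+s+\alpha}}$, hence by a constant $C_{w,\rho}$. For the last term, differentiation produces three pieces:
\begin{itemize}
\item $t'\nabla_\tau w(y)[\cdots]^{n-1}K_{t'\rho}$;
\item $(n-1)t'^2 w(y)\nabla_\tau\rho(y)[\cdots]^{n-2}K_{t'\rho}$;
\item $t'\nabla_\tau\rho(y)[\cdots]^{n-1}\partial_\zeta K_{t'(\rho+\zeta w)}$.
\end{itemize}
Each is of the form $\int (y-x)\,a(y)\,K(y,x)\,d\mathcal{H}^n_y$ with $a\in C^{s+\alpha}$ and $K$ in the class $\mathcal{S}$, by Lemma \ref{4041}(i)/(ii). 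The third condition of Definition \ref{D1}, namely the Hölder continuity of $\int (y-x)K$, combined with Lemma \ref{14-1}, then gives the required $C^{\alpha}$ bound. Collecting these gives the second line of \eqref{921-1}.

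\emph{Main obstacle.} I expect the hardest step to be verifying that the second-order kernel $\partial_\xi\partial_\zeta K_{\xi(\rho+\zeta w)}$, and the first-order kernel rescaled by $\xi$, satisfy condition (iii) of Definition \ref{D1} with linear dependence on $\|w\|$. My first attempt would be to differentiate identity \eqref{526-1} twice in the parameters. Every term on its right-hand side remains of the form handled by Lemma \ref{518-1}, and the metric matrix $\nabla_\tau\Phi^T\nabla_\tau\Phi$ stays uniformly invertible for $\xi\in[0,1]$ because $c<1+\xi(\rho-1)<C$.
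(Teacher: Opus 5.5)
Your overall plan is the paper's: differentiate \eqref{302} and \eqref{303} under the integral sign, then estimate the resulting terms with \eqref{906-1}, Lemma \ref{4041} and Lemma \ref{14-1}. The one place you depart from it is also the one place your argument is incomplete. Because you differentiate through the inner $\xi$-derivative, you produce the mixed kernel $\partial_\xi\partial_\zeta K_{\xi(\rho+\zeta w)}$. You then need it to lie in a class $\mathcal{S}_\kappa$ with $\kappa\le C\|w\|_{C^{1+s+\alpha}(\mathbb{S}_{+}^{n})}$, and in particular to satisfy condition (iii) of Definition \ref{D1}. That claim is not available anywhere, and you only sketch how you would attempt it, so as written the proof has a hole at exactly the step you call the main obstacle.

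The paper never needs this second-order estimate. At $\xi=0$ the expression $[1+\xi(\rho(y)-1)]^nK_{\xi\rho}(y,x)$ equals $|y-x|^{-n-1-s}$ whatever $\rho$ is, so
\[\int_0^{t'}\frac{d}{d\xi}\{[1+\xi(\rho(y)-1)]^nK_{\xi\rho}(y,x)\}\,d\xi=[1+t'(\rho(y)-1)]^nK_{t'\rho}(y,x)-\frac{1}{|y-x|^{n+1+s}}.\]
This is in fact the form in which the integral first arose in Section 3. The $\zeta$-derivative therefore only hits the endpoint $\xi=t'$, and three terms remain:
\begin{enumerate}
\item Your term (a), which \eqref{906-1} with $h=w$ handles, as you do.
\item The term $(\rho(y)-\rho(x))\,nt'w(y)[1+t'(\rho(y)-1)]^{n-1}K_{t'\rho}$. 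Lemma \ref{14-1} applies, since $K_{t'\rho}\in\mathcal{S}_{\kappa_1}$ by Lemma \ref{4041}(i) applied to $1+t'(\rho-1)$, which stays bounded below.
\item The term $(\rho(y)-\rho(x))[1+t'(\rho(y)-1)]^n\,\partial_\zeta K_{t'(\rho+\zeta w)}$. Here $\partial_\zeta K_{t'(\rho+\zeta w)}$ is the first variation at the base function $1+t'(\rho-1)$ in the direction $t'w$. Lemma \ref{4041}(ii) therefore places it directly in $\mathcal{S}_{\kappa_2}$ with $\kappa_2\le Ct'\|w\|_{C^{1+s+\alpha}(\mathbb{S}_{+}^{n})}$.
\end{enumerate}

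The same collapse takes care of the middle term of $R_{2,\rho}$. After collapsing, your rewriting $|y-x|^2=-2x\cdot(y-x)$ turns it into integrals of the form $\int(y_i-x_i)a(y)K(y,x)\,d\mathcal{H}^n_y$, to which Lemma \ref{14-1} applies with $v_1=y_i$. Your treatment of the first and last terms of $R_{2,\rho}$ is fine as it stands.

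With this single change your argument goes through using only first-order kernel variations. It then gives bounds linear in $\|w\|_{C^{1+s+\alpha}(\mathbb{S}_{+}^{n})}$ for both $\partial_wR_{1,\rho}$ and $\partial_wR_{2,\rho}$, which is stronger than the stated constants $C_{w,\zeta,\rho}$ and $C_{w,\rho}$. Keeping every term in the uncollapsed form of $\varphi$ from \eqref{906-1} gains you nothing and costs you a kernel estimate the paper never proves.
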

\begin{proof}
For $R_{1,\rho}(x)$, we already know that
$$R_{1,\rho}(x):=2\int_{0}^{1}\int_{0}^{t'}\int_{\mathbb{S}^{n}_{+}}(\rho(y)-\rho(x))\frac{d}{d\xi}\{[1+\xi(\rho(y)-1)]^{n}K_{\xi \rho}(y,x)\}d\mathcal{H}_{y}^{n}d\xi dt'.$$
By differentiating we have
\begin{align*}
\partial_{w}R_{1,\rho}(x)=&\frac{d}{d\zeta}|_{\zeta=0}R_{1,\rho+\zeta w}(x)\notag\\
=&2\int_{0}^{1}\int_{0}^{t'}\int_{\mathbb{S}^{n}_{+}}(w(y)-w(x))\frac{d}{d\xi}\{[1+\xi(\rho(y)-1)]^{n}K_{\xi \rho}(y,x)\}d\mathcal{H}_{y}^{n}d\xi dt'\notag\\
&+2\int_{0}^{1}\int_{\mathbb{S}^{n}_{+}}(\rho(y)-\rho(x))\frac{d}{d\zeta}|_{\zeta=0}\{[1+t'((\rho+\zeta w)(y)-1)]^{n}\}K_{t' \rho}(y,x)d\mathcal{H}_{y}^{n}dt'\notag\\
&+2\int_{0}^{1}\int_{\mathbb{S}^{n}_{+}}(\rho(y)-\rho(x))[1+t'(\rho(y)-1)]^{n}\frac{d}{d\zeta}|_{\zeta=0}K_{t' (\rho+\zeta w)}(y,x)d\mathcal{H}_{y}^{n}dt'.
\end{align*}
The first term follows from \eqref{906-1} by choosing $h(y)=w(y)$ which implies that its $C^{\alpha}$-norm is bounded by $\|w(x)\|_{C^{1+s+\alpha}(\mathbb{S}_{+}^{n})}$. In other words, $w,\ \rho\in C^{1+s+\alpha}(\mathbb{S}_{+}^{n})$, therefore it holds $\|\frac{d}{d\zeta}|_{\zeta=0}\{[1+t'((\rho+\zeta w)(y)-1)]^{n}\}\|_{C^{s+\alpha}(\mathbb{S}_{+}^{n})}\leq C_{w\rho},\ \|[1+t'(\rho(y)-1)]^{n}\|_{C^{s+\alpha}(\mathbb{S}_{+}^{n})}\leq C_{\rho}\|\rho\|_{C^{1+s+\alpha}(\mathbb{S}_{+}^{n})}<C$.
By Lemma \ref{4041} and by utilizing its proof process, we can obtain that the $C^{\alpha}$-norm of the second term is bounded by $C_{w,\rho}\|\rho\|_{C^{1+s+\alpha}(\mathbb{S}_{+}^{n})}$, and the $C^{\alpha}$-norm  of the third term is bounded by by $C_{\zeta,\rho}\|\rho\|_{C^{1+s+\alpha}(\mathbb{S}_{+}^{n})}$.
The argument for $\partial_{w}R_{2,\rho}(x)$ is similar. Hence we have \eqref{921-1}.\end{proof}

\begin{thm}\label{16-5}
(\cite{VD20,MP2009}) Assume that $f:\mathbb{R}^{n}\times[0,T]\rightarrow\mathbb{R}$ is a smooth and $|f(x,t)|\leq C(1+|x|)^{-n-1-s}$ for all $(x,t)\in\Sigma\times[0,T]$. Assume that $u$ with supp$u(\cdot,t)\subset B_{1}$ for all $t\in[0,T]$ is the solution of
\begin{equation*}
\left\{
\begin{array}{ll}
\partial_{t}u(x,t)=\Delta^{\frac{1+s}{2}}u(x,t)+f(x,t),
&in\ \mathbb{R}^{n}\times(0,T],\\
u(x,0)=0.
\end{array}
\right.
\end{equation*}
Then it holds
\[\underset{0<t<T}\sup\|u(\cdot,t)\|_{C^{1+s+\alpha}(\mathbb{R}^{n})}\leq\underset{0<t<T}\sup
\|f(\cdot,t)\|_{C^{\alpha}(\mathbb{R}^{n})}.\]
\end{thm}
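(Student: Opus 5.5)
The estimate in Theorem \ref{16-5} is a Schauder estimate for the fractional heat equation on the whole space $\mathbb{R}^{n}$ with zero initial data. I will prove it through the Duhamel representation. Let $p_{t}$ be the fractional heat kernel, i.e. the fundamental solution of $\partial_{t}-\Delta^{\frac{1+s}{2}}$ on $\mathbb{R}^{n}$. Since $u(\cdot,0)=0$, and the decay $|f|\leq C(1+|x|)^{-n-1-s}$ together with $\mathrm{supp}\,u(\cdot,t)\subset B_{1}$ guarantees uniqueness in the class of bounded solutions, we have
\[u(x,t)=\int_{0}^{t}\int_{\mathbb{R}^{n}}p_{t-\tau}(x-y)f(y,\tau)\,dy\,d\tau .\]
The heat kernel is self-similar: $p_{t}(x)=t^{-\frac{n}{1+s}}P(t^{-\frac{1}{1+s}}x)$. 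The profile $P$ is smooth and satisfies $|D^{k}P(z)|\leq C_{k}(1+|z|)^{-n-1-s-k}$. These are the only facts about $p_{t}$ the argument needs.

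\textbf{Step 1 (low norms).} The $C^{0}$ bound, and more generally the $C^{\alpha}$ bound, is immediate from $\int p_{t}=1$: it gives $\|u(\cdot,t)\|_{C^{\alpha}}\leq t\sup_{\tau}\|f(\cdot,\tau)\|_{C^{\alpha}}$. Since the support of $u$ lies in $B_{1}$ and $T$ is finite, these lower-order norms are harmless. Interpolation (Lemma \ref{825-1} on the sphere, or its Euclidean analogue) then reduces everything to the top seminorm.

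\textbf{Step 2 (top seminorm).} Set $\beta=1+s+\alpha$; I need $[\nabla u(\cdot,t)]_{C^{s+\alpha}}\leq C\sup_{\tau}\|f(\cdot,\tau)\|_{C^{\alpha}}$. I will use the Littlewood--Paley (Campanato-type) characterization of Hölder spaces, which is valid when $\beta\notin\mathbb{N}$ (so that $s+\alpha<1$). Let $\Delta_{j}$ be the dyadic frequency block. Then $\|\Delta_{j}f\|_{\infty}\leq C2^{-j\alpha}\|f\|_{C^{\alpha}}$. On the Fourier side, $\Delta_{j}p_{t}\ast$ acts as multiplication by $e^{-t|\xi|^{1+s}}$ restricted to $|\xi|\sim2^{j}$, so
\[\|\Delta_{j}u(\cdot,t)\|_{\infty}\leq\int_{0}^{t}Ce^{-c(t-\tau)2^{j(1+s)}}2^{-j\alpha}\,d\tau\,\|f\|_{C^{\alpha}}\leq C2^{-j(1+s+\alpha)}\|f\|_{C^{\alpha}},\]
with a constant independent of $t$ and $T$. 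This is exactly $u(\cdot,t)\in C^{1+s+\alpha}$. The low-frequency block $j=0$ is handled by Step 1.

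\textbf{Main obstacle.} The delicate point is the kernel estimate for $\Delta_{j}p_{t}$: the multiplier $e^{-t|\xi|^{1+s}}$ is not smooth at the origin. This has to be controlled using only the polynomial decay $(1+|z|)^{-n-1-s}$ of $P$ rather than Gaussian decay, which requires integrating by parts in $\xi$ only up to order $n+1$ on the annulus. A second issue is that the inequality as stated has constant $1$. My plan honestly yields a constant $C=C(n,s,\alpha)$, and I would state the result with that constant. Finally, the constant must not depend on $T$; the geometric-series computation above gives this, since $\int_{0}^{\infty}e^{-c\sigma2^{j(1+s)}}d\sigma=C2^{-j(1+s)}$.
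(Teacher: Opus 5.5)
The paper gives no proof of Theorem \ref{16-5}; it simply quotes it from \cite{VD20,MP2009}. Your Duhamel/Littlewood--Paley argument is therefore a self-contained substitute rather than a variant of anything in the text, and it is correct. The dyadic semigroup bound $\|e^{t\Delta^{\frac{1+s}{2}}}\Delta_{j}g\|_{\infty}\leq Ce^{-ct2^{j(1+s)}}\|\Delta_{j}g\|_{\infty}$ combines with $\|\Delta_{j}f\|_{\infty}\leq C2^{-j\alpha}\|f\|_{C^{\alpha}}$. Together with $C^{1+s+\alpha}=B^{1+s+\alpha}_{\infty,\infty}$ (the exponent is non-integer since $\alpha<1-s$), this gives the estimate. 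You are also right that the printed inequality needs a constant $C=C(n,s,\alpha)$, and that $\Sigma\times[0,T]$ should read $\mathbb{R}^{n}\times[0,T]$. What your route buys is an actual proof of the ingredient that Theorem \ref{812-1} later relies on.

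Two points to tighten.

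First, your steps do not deliver the $T$-independence you claim at the end. You treat the block $j=0$ via Step 1, which only gives $\|u(\cdot,t)\|_{C^{0}}\leq t\sup_{\tau}\|f(\cdot,\tau)\|_{C^{0}}$; it also uses $p_{t}\geq0$, not just $\int p_{t}=1$. So as written your constant is $C(1+T)$. That is harmless here, since Theorem \ref{812-1} carries a factor $(1+T)$ anyway. If you want a $T$-free constant, proceed as follows:
\begin{itemize}
\item Use homogeneous blocks $\dot{\Delta}_{j}$, $j\in\mathbb{Z}$. Their kernels have mean zero, so $\|\dot{\Delta}_{j}f\|_{\infty}\leq C2^{-j\alpha}[f]_{C^{\alpha}}$ for every $j$.
\item The same time integral then gives $\|\dot{\Delta}_{j}u(\cdot,t)\|_{\infty}\leq C2^{-j(1+s+\alpha)}\sup_{\tau}[f(\cdot,\tau)]_{C^{\alpha}}$.
\item Hence $[\nabla u(\cdot,t)]_{C^{s+\alpha}}\leq C\sup_{\tau}[f(\cdot,\tau)]_{C^{\alpha}}$. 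The series $\sum_{j}\nabla\dot{\Delta}_{j}u$ converges to $\nabla u$ because $u(\cdot,t)\in L^{1}$, and the low-frequency sum converges because $s+\alpha<1$.
\item Finally, $\mathrm{supp}\,u(\cdot,t)\subset B_{1}$ gives $\|u\|_{C^{0}}+\|\nabla u\|_{C^{0}}\leq C[\nabla u]_{C^{s+\alpha}}$.
\end{itemize}
This last step, rather than interpolation plus finiteness of $T$, is the real role of the support hypothesis.

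Second, your ``main obstacle'' is misplaced. On the annulus $|\xi|\sim2^{j}$ the symbol $e^{-c_{n,s}t|\xi|^{1+s}}$ is smooth. The $L^{1}$ bound for the kernel of $e^{-c_{n,s}t|\xi|^{1+s}}\tilde{\varphi}(2^{-j}\xi)$ follows by rescaling to $\lambda=t2^{j(1+s)}$ on the unit annulus and integrating by parts $n+1$ times. The resulting powers of $\lambda$ are absorbed by $e^{-c\lambda}$. Neither the singularity at $\xi=0$ nor the decay of the profile $P$ enters. The decay hypothesis on $f$ is needed only to justify the Duhamel representation, e.g.\ by Fourier transforming in $x$, since $f(\cdot,t)\in L^{1}$.
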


%\begin{lem}\label{17-1}
%Let $K\in\mathcal{S}_{\kappa}(see \ref{D1})$, $K$ is defined on %$\mathbb{S}^{n}_{+}$ and assume $v_{1}\in %C^{1+s+\alpha}(\mathbb{S}^{n}_{+}),\ v_{2}\in C^{s+\alpha}(\mathbb{S}^{n}_{+})$ and $v_{3}\in C^{\alpha}(\mathbb{S}^{n}_{+})$. Then the function
%\[\psi(x)=\int_{\Sigma}(v_{1}(y)-v_{1}(x))v_{2}(y)v_{3}(x)K(y,x)d\mathcal{H}_{y}^{n}\]
%is H$\ddot{o}$lder continuous and
%\[\|\psi\|_{C^{\alpha}(\mathbb{S}^{n}_{+})}\leq C\kappa\|v_{1}\|_{C^{1+s+\alpha}(\mathbb{S}^{n}_{+})}\|v_{2}\|_{C^{s+\alpha}(\mathbb{S}^{n}_{+})}\|v_{3}\|_{C^{\alpha}(\mathbb{S}^{n}_{+})}.\]
%\end{lem}

\begin{thm}\label{812-1}
Assume that $f:\mathbb{S}_{+}^{n}\times[0,T]\rightarrow\mathbb{R}$ and $u_{0},g:\mathbb{S}_{+}^{n}\rightarrow\mathbb{R}$ are smooth and fix $\alpha\in(0,1-s)$. Assume that $u$ is a solution of
\begin{equation*}
\left\{
\begin{array}{ll}
\partial_{t}u(x,t)=\Delta^{\frac{1+s}{2}}u(x,t)+f(x,t)+g(x),
&in\ \mathbb{S}^{n}_{+}\times[0,T),\\
u(x,0)=u_{0}(x),\\
\frac{\partial u(x,t)}{\partial\eta}=\cos\theta\sqrt{u^{2}(x,t)+|\nabla_{\tau}u(x,t)|^{2}},
&on\ \partial\mathbb{S}^{n}_{+}\times[0,T).
\end{array}
\right.
\end{equation*}
Then it holds
\begin{align*}
\underset{0<t<T}\sup\|u(\cdot,t)\|_{C^{1+s+\alpha}(\overline{\mathbb{S}_{+}^{n}})}\leq& C\|u_{0}\|_{C^{\alpha}(\overline{\mathbb{S}_{+}^{n}})}+C(1+T)(\underset{0<t<T}\sup
\|f(\cdot,t)\|_{C^{\alpha}(\mathbb{S}_{+}^{n})}+\|g\|_{C^{1+s+\alpha}(\mathbb{S}_{+}^{n})})\notag\\
&+\underset{0<t<T}\sup
\|\cos\theta\sqrt{u^{2}(x,t)+|\nabla_{\tau}u(x,t)|^{2}}\|_{C^{s+\alpha}(\partial\mathbb{S}_{+}^{n})}
\end{align*}
and
\begin{align*}
\underset{0<t<T}\sup\|u\|_{C^{0}(\overline{\mathbb{S}_{+}^{n}})}\leq& \|u_{0}\|_{C^{0}(\overline{\mathbb{S}_{+}^{n}})}+CT(\underset{0<t<T}\sup
\|f(\cdot,t)\|_{C^{0}(\mathbb{S}_{+}^{n})}+\|g\|_{C^{0}(\mathbb{S}_{+}^{n})}\notag\\
&+\underset{0<t<T}\sup
\|\cos\theta\sqrt{u^{2}(x,t)+|\nabla_{\tau}u(x,t)|^{2}}\|_{C^{0}(\partial\mathbb{S}_{+}^{n})}).
\end{align*}
\end{thm}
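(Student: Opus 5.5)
I will prove Theorem \ref{812-1} by localization and flattening. The key tool is the whole-space Schauder estimate of Theorem \ref{16-5}. The nonlinear Neumann condition will be treated as given boundary data $\phi(x,t):=\cos\theta\sqrt{u^{2}+|\nabla_{\tau}u|^{2}}$ on $\partial\mathbb{S}^{n}_{+}$. This is why its $C^{s+\alpha}(\partial\mathbb{S}^{n}_{+})$ norm appears on the right-hand side.

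\emph{Reduction to homogeneous data.} First I split $u=v+w$, where $w$ absorbs the boundary condition and the initial datum. Let $\Psi$ be a smooth defining function of $\partial\mathbb{S}^{n}_{+}$ near the equator, for instance a cutoff times the signed distance. Set $w_{1}(x,t):=-\Psi(x)\tilde\phi(x,t)$, where $\tilde\phi$ is a bounded extension of $\phi$ from $\partial\mathbb{S}^{n}_{+}$ into a collar. Then $\partial_{\eta}w_{1}=\phi$ on the boundary. Moreover $\|w_{1}\|_{C^{1+s+\alpha}}\le C\|\phi\|_{C^{s+\alpha}(\partial\mathbb{S}^{n}_{+})}$, because multiplying by the distance gains one derivative; this is the standard trace-lifting estimate. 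Next let $w_{0}(\cdot,t)$ be the caloric extension of $u_{0}$, i.e.\ the solution of the linear fractional heat equation on the doubled sphere $\mathbb{S}^{n}$ with even-reflected datum $\tilde u_{0}(x)=u_{0}(x^{\star})$, the same reflection used in Lemma \ref{4041}. Since the heat kernel of $\Delta^{\frac{1+s}{2}}$ on $\mathbb{S}^{n}$ is even under reflection, $w_{0}$ has zero normal derivative on the equator. This contributes the $\|u_{0}\|$ term; as the statement only has $C^{\alpha}$ of $u_{0}$, I will accept whatever smoothing constant the semigroup gives on $[0,T]$. Then $v=u-w_{0}-w_{1}$ solves
\[
\partial_{t}v=\Delta^{\frac{1+s}{2}}v+F,\qquad \partial_{\eta}v=0,\qquad v(\cdot,0)=-w_{1}(\cdot,0),
\]
with $F=f+g+(\Delta^{\frac{1+s}{2}}-\partial_{t})w_{1}$ plus a correction term. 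That correction arises because $\Delta^{\frac{1+s}{2}}$ on $\mathbb{S}^{n}_{+}$ differs from the reflected operator on $\mathbb{S}^{n}$.

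\emph{Schauder estimate for the homogeneous problem.} I extend $v$ evenly to $\tilde v$ on $\mathbb{S}^{n}$. The half-sphere operator applied to $v$ equals the full-sphere operator applied to $\tilde v$ minus the integral of $(\tilde v(y)-\tilde v(x))|x-y|^{-n-1-s}$ over $\mathbb{S}^{n}_{-}$. Rewriting the latter over $\mathbb{S}^{n}_{+}$ with the kernel $|x-y^{\star}|^{-n-1-s}$ gives a kernel that is singular only at the equator. I will control it in $C^{\alpha}$ by $\|v\|_{C^{1+s+\alpha}}$ with a small constant near the boundary, using the cancellation $\tilde v(y^{\star})=\tilde v(y)$ and Lemma \ref{521-1}-type integrability. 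Then I cover $\mathbb{S}^{n}$ by finitely many charts, take a partition of unity $\chi_{i}$ supported in small balls, and map each chart to $B_{1}\subset\mathbb{R}^{n}$. Applied to $\chi_{i}\tilde v$, the whole-space estimate of Theorem \ref{16-5} has a right-hand side containing commutator terms. These are $[\Delta^{\frac{1+s}{2}},\chi_{i}]\tilde v$ and the error between the spherical and Euclidean fractional Laplacians in a chart. Both are of lower order: they are bounded in $C^{\alpha}$ by $C\|\tilde v\|_{C^{1+s+\alpha-\epsilon}}$. Using the interpolation inequality of Lemma \ref{825-1}, I absorb them as $\delta\|\tilde v\|_{C^{1+s+\alpha}}+C_{\delta}\|\tilde v\|_{C^{0}}$. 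Summing over charts yields
\[
\sup_{t}\|v\|_{C^{1+s+\alpha}}\le C\Big(\sup_{t}\|F\|_{C^{\alpha}}+\sup_{t}\|v\|_{C^{0}}\Big).
\]

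\emph{$C^{0}$ bound and conclusion.} The $C^{0}$ estimate (the second inequality) comes from the maximum principle for $\Delta^{\frac{1+s}{2}}$. At an interior spatial maximum we have $\Delta^{\frac{1+s}{2}}u\le 0$. At a boundary maximum, the Neumann data bound $\partial_{\eta}u$, and comparison with $\pm(\|u_{0}\|_{C^{0}}+t(\ldots))$ plus a barrier linear in the distance gives the bound with constant $CT$. Inserting this bound into the Schauder bound, together with the estimates for $w_{0}$ and $w_{1}$, gives the first inequality. The factor $(1+T)$ arises because the $C^{0}$ term grows linearly in $T$. The $g$ term carries $C^{1+s+\alpha}$ because $g$ is time-independent and can be subtracted via the stationary problem.

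I expect the main obstacle to be the reflection-correction term, that is, bounding in $C^{\alpha}$ the difference between the half-sphere fractional Laplacian and the reflected full-sphere one. Its kernel degenerates at the equator, and the even reflection is only Lipschitz across it unless $\partial_{\eta}v=0$. This is exactly why I first subtract $w_{1}$ to make the Neumann data homogeneous before reflecting.
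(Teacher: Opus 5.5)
Your outline follows the paper's skeleton. The paper splits $u=u_{11}+u_{12}+u_2$, reflects the Neumann piece evenly, localizes in charts with Theorem \ref{16-5}, interpolates, and uses the maximum principle for the $C^0$ bound. Where you lift the boundary data, the paper uses a compactness--contradiction argument. Freezing $\phi$ as given data is sounder than the paper's additive splitting of a nonlinear boundary condition.

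\emph{The reflection step.} The step you flag as the main obstacle does not go through. Reflection is an isometry and $x^\star=x$ for $x\in\partial\mathbb{S}^n_+$, so $|x-y^\star|=|x-y|$ there. Hence on the equator the correction $2\int_{\mathbb{S}^n_+}(v(y)-v(x))|x-y^\star|^{-n-1-s}\,d\mathcal{H}^n_y$ equals $\Delta^{\frac{1+s}{2}}v(x)$ itself, i.e.\ half of the full-sphere operator applied to $\tilde v$. It is of full order $1+s$ and homogeneous under dilations centred at boundary points. Take $v_r(x)=r^{1+s+\alpha}\varphi((x-x_0)/r)$ in a flattened boundary chart, with $\varphi$ even across the equator. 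Up to curvature errors, the correction of $v_r$ is $r^{\alpha}$ times a fixed nonconstant function of $(x-x_0)/r$, so its $C^\alpha$ seminorm does not depend on $r$. Meanwhile $[\nabla v_r]_{C^{s+\alpha}}$ is fixed and $\|v_r\|_{C^0}\to0$. So no bound $\delta\|v\|_{C^{1+s+\alpha}}+C_\delta\|v\|_{C^0}$ with small $\delta$ holds, and there is nothing to absorb. What is missing is a genuine boundary Schauder estimate for the half-sphere (regional) operator with Neumann condition, i.e.\ a half-space model problem that Theorem \ref{16-5} does not cover. The paper does not supply it either: it simply asserts that the even extension of $U$ solves the full-sphere equation, which is your argument with the correction dropped.

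\emph{The lifting and the initial datum.} Let $w\in C^{1+s+\alpha}$ up to the equator. In a boundary chart, the linear Taylor term of $w(y)-w(x)$ cancels over the slab $\{0<d(y)<2d(x)\}$, which is symmetric about $x$. On the remaining region it produces a multiple of $d(x)^{-s}$, and the Taylor remainder is integrable. Thus $\Delta^{\frac{1+s}{2}}w(x)=c\,\partial_\eta w\,d(x)^{-s}+O(1)$ with $c\neq0$, where $\partial_\eta w$ is taken at the nearest boundary point. Consequently $\Delta^{\frac{1+s}{2}}w_1$ is unbounded unless $\phi\equiv0$, so $F\notin C^\alpha$. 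There are two further problems with $w_1$:
\begin{itemize}
\item $\partial_t w_1$ involves $\partial_t\phi$, i.e.\ $\partial_t\nabla u$ on the equator, which does not appear on the right-hand side.
\item The lifting needs a regularizing extension of $\phi$; a merely bounded one does not give $C^{1+s+\alpha}$.
\end{itemize}
Applied to $u$ itself, the same expansion shows that a solution with $\sup_t\|u(\cdot,t)\|_{C^{1+s+\alpha}}<\infty$ and bounded $\partial_t u$ must have $\partial_\eta u=0$. So for $\cos\theta\neq0$ the boundary condition forces $u=|\nabla_\tau u|=0$ on $\partial\mathbb{S}^n_+$. The nonzero capillary data are therefore incompatible with the class in which the estimate is claimed.

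Finally, the smoothing constant you propose to accept is not finite. Gaining $1+s$ derivatives from $C^\alpha$ costs a factor of order $t^{-1}$. Since $u(\cdot,t)\to u_0$, lower semicontinuity of H\"older norms gives $\sup_{0<t<T}\|u(\cdot,t)\|_{C^{1+s+\alpha}}\geq\|u_0\|_{C^{1+s+\alpha}}$. Take $\theta=\pi/2$, $f=g=0$ and a small-amplitude, highly oscillating $u_0$ supported inside $\mathbb{S}^n_+$; then the first inequality fails. So $\|u_0\|_{C^\alpha}$ must be replaced by $\|u_0\|_{C^{1+s+\alpha}}$. The paper's step \eqref{1018-2} has the same defect.
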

\begin{proof}
We shall decompose $u$ into a sum $u_{1}+u_{2}$ where
\begin{equation}\label{811-1}
\left\{
\begin{array}{ll}
\partial_{t}u_{1}(x,t)=\Delta^{\frac{1+s}{2}}u_{1}(x,t)+f(x,t)+g(x),
&in\ \mathbb{S}^{n}_{+}\times(0,T],\\
u_{1}(x,0)=u(x,0)=u_{0}(x),\\
\frac{\partial u_{1}(x,t)}{\partial\eta}=0,
&on\ \partial\mathbb{S}^{n}_{+}\times[0,T),
\end{array}
\right.
\end{equation}
and
\begin{equation}\label{811-2}
\left\{
\begin{array}{ll}
\partial_{t}u_{2}(x,t)=\Delta^{\frac{1+s}{2}}u_{2}(x,t),
&in\ \mathbb{S}^{n}_{+}\times(0,T],\\
u_{2}(x,0)=0.\\
\frac{\partial u_{2}(x,t)}{\partial\eta}=\cos\theta\sqrt{u_{2}^{2}(x,t)+|\nabla_{\tau}u_{2}(x,t)|^{2}},
&on\ \partial\mathbb{S}^{n}_{+}\times[0,T).
\end{array}
\right.
\end{equation}
Next, we will compute the above two equations separately. Let us first prove the second inequality. For \eqref{811-1}, again, we split $u_{1}$ into the sum of $u_{11}$ and $u_{12}$ where
\begin{equation}\label{917-1}
\left\{
\begin{array}{ll}
\partial_{t}u_{11}(x,t)=\Delta^{\frac{1+s}{2}}u_{11}(x,t),
&in\ \mathbb{S}^{n}_{+}\times(0,T],\\
u_{11}(x,0)=u_{0}(x),\\
\frac{\partial u_{11}(x,t)}{\partial\eta}=0,
&on\ \partial\mathbb{S}^{n}_{+}\times[0,T),
\end{array}
\right.
\end{equation}
and
\begin{equation}\label{912-1}
\left\{
\begin{array}{ll}
\partial_{t}u_{12}(x,t)=\Delta^{\frac{1+s}{2}}u_{12}(x,t)+f(x,t)+g(x),
&in\ \mathbb{S}^{n}_{+}\times(0,T],\\
u_{12}(x,0)=0.
\end{array}
\right.
\end{equation}
By maximum principle, $|u_{11}(x,t)|\leq|u_{0}(x)|,\ x\in\overline{\mathbb{S}^{n}_{+}}$.

We assume $g(x)=0$, fix a small $\epsilon>0$, define $u_{\epsilon}(x,t)=(t+\epsilon)^{-1}u_{12}(x,t)$, there exists $(x_{0},t_{0})\in\mathbb{S}^{n}_{+}\times(0,T-\epsilon)$ such that $$u_{\epsilon}(x_{0},t_{0})=\underset{\mathbb{S}^{n}_{+}\times(0,T-\epsilon)}\sup u_{\epsilon}(x,t),$$
then
$$\partial_{t}u_{\epsilon}(x_{0},t_{0})=-\frac{u_{12}(x_{0},t_{0})}{(t_{0}+\epsilon)^{2}}+\frac{\partial_{t}u_{12}(x_{0},t_{0})}{t_{0}+\epsilon}\geq0,$$
$$\Delta^{\frac{1+s}{2}}u_{12}(x_{0},t_{0})+f(x_{0},t_{0})\geq\frac{u_{12}(x_{0},t_{0})}{t_{0}+\epsilon}.$$
The equation for $u_{\epsilon}(x_{0},t_{0})$ implies
$$\frac{u_{12}(x_{0},t_{0})}{t_{0}+\epsilon}\leq\underset{0<t<T}\sup f(x,t).$$
The estimate follows by letting $\epsilon\rightarrow0$. By repeating the argument for $-u_{12}$ we obtain that
$$|u_{12}(x,t)|\leq CT\underset{0<t<T}\sup f(x,t).$$

For \eqref{811-2}, fix a small $\varepsilon>0$, let $W:=\underset{x\in\partial\mathbb{S}^{n}_{+},0<t<T}\sup|\cos\theta\sqrt{u^{2}_{2}(x,t)+|\nabla_{\tau}u_{2}(x,t)|^{2}}|$, define $u_{\varepsilon}(x,t)=u_{2}+W\psi(d(x))$, the distance function $d(x):=dist(x,\partial\mathbb{S}^{n}_{+})$ denotes the geodesic distance from $x$ to the boundary $\partial\mathbb{S}^{n}_{+}$. Here, $\psi(d(x))$ is a smooth function and it is denoted as
$\psi(d(x)):=1-\int_{0}^{d(x)}\delta(l)dl$. Let $\delta(l)$ is a smooth function satisfying $\delta(l)=1$ for $l\leq\frac{1}{2}\epsilon$, $\delta(l)=0$ for $l\geq\epsilon$ where $\epsilon$ small enough and $\int_{0}^{\epsilon}\delta(l)dl=1$. In fact, $\psi$ satisfies $\psi(0)=1$, $\psi(d(x))=0$ for $d(x)\geq\epsilon$, $\psi'_{d}|_{d(x)=0}=-1$. Then, $u_{\varepsilon}(x,t)$ satisfies the following equation
\begin{equation*}
\left\{
\begin{array}{ll}
\partial_{t}u_{\varepsilon}(x,t)=\Delta^{\frac{1+s}{2}}u_{\varepsilon}(x,t)-W\Delta^{\frac{1+s}{2}}\psi(d(x)),
&in\ \mathbb{S}^{n}_{+}\times(0,T],\\
u_{\varepsilon}(x,0)=W\psi(d(x)).\\
\frac{\partial u_{\varepsilon}(x,t)}{\partial\eta}\leq0,&on\ \partial\mathbb{S}^{n}_{+}\times[0,T).
\end{array}
\right.
\end{equation*}
Based on the calculation of $u_{1}$ in the equation \eqref{811-1}, we can directly obtain $|u_{\varepsilon}(x,t)|\leq|W\psi(d(x))|+(1+T)W\Delta^{\frac{1+s}{2}}\psi(d(x))$. From $u_{\varepsilon}(x,t)=u_{2}+W\psi(d(x))$, we can obtain
$$|u_{2}(x,t)|\leq CT\underset{x\in\partial\mathbb{S}^{n}_{+},0<t<T}\sup|\cos\theta\sqrt{u^{2}_{2}(x,t)+|\nabla_{\tau}u^{2}_{2}(x,t)|}|.$$
%W\eta-u_{2}(x,t)$,
%\begin{equation}
%\left\{
%\begin{array}{ll}
%\partial_{t}u_{\varepsilon}(x,t)=-\partial_{t}u_{2}(x,t),
%&in\ \mathbb{S}^{n}_{+}\times(0,T],\\
%u_{\varepsilon}(x,0)=W\eta.\\
%\frac{\partial u_{\varepsilon}(x,t)}{\partial\eta}\geq0,
%&on\ \partial\mathbb{S}^{n}_{+}\times[0,T).
%\end{array}
%\right.
%\end{equation}

%Assume that there exists $(x_{0},t_{0})\in\mathbb{S}^{n}_{+}\times(0,T-\varepsilon)$ such that $$u_{\varepsilon}(x_{0},t_{0})=\underset{\mathbb{S}^{n}_{+}\times(0,T-\varepsilon)}\sup u_{\varepsilon}(x,t),$$
%then
%$$\partial_{t}u_{\varepsilon}(x_{0},t_{0})=-\frac{u_{2}(x_{0},t_{0})-W}{(t_{0}+\varepsilon)^{2}}+\frac{\partial_{t}u_{2}(x_{0},t_{0})}{t_{0}+\varepsilon}\geq0,$$
%and
%$$0\geq\Delta^{\frac{1+s}{2}}u_{\varepsilon}(x_{0},t_{0})=(t_{0}+\varepsilon)^{-1}\Delta^{\frac{1+s}{2}}u_{2}(x_{0},t_{0}),$$
%$$|u_{2}(x,t)|\leq\underset{x\in\partial\mathbb{S}^{n}_{+},0<t<T}\sup|\cos\theta\sqrt{u^{2}(x,t)+|\nabla_{\tau}u(x,t)|^{2}}|.$$
%On the other hand, $(x_{0},t_{0})\in\partial\mathbb{S}^{n}_{+}\times(0,T-\varepsilon)$, it is required to satisfy two conditions, $\frac{\partial u_{\varepsilon}(x_{0},t_{0})}{\partial\eta}>0$ and $\partial_{t}u_{\varepsilon}(x_{0},t_{0})>0$, these two conditions can also deduce $|u_{2}(x,t)|\leq\underset{x\in\partial\mathbb{S}^{n}_{+},0<t<T}\sup|\cos\theta\sqrt{u^{2}(x,t)+|\nabla_{\tau}u(x,t)|^{2}}|$.
%We obtain the second inequality in theorem \ref{812-1}.

Let us prove the first inequality. For \eqref{912-1}, since $\mathbb{S}^{n}_{+}$ is embedded in $\mathbb{R}^{n+1}$ to extend $u_{12},\ f\in C^{1}(\mathbb{S}^{n}_{+};\mathbb{R})$ to $\tilde{u}_{12},\ f\in C^{1}(\mathbb{S}^{n};\mathbb{R})$ such that $\tilde{u}_{12}=u_{12},\ f=f$ on $\mathbb{S}^{n}_{+}$. Let us fix $x_{0}\in\mathbb{S}^{n}$ and by rotating the coordinates we may assume that it is the north pole, i.e., $x_{0}=e_{n+1}$. Let us first localize the equation around $x_{0}$. To this aim we fix small $\epsilon>0$ and choose a smooth cut-off function $\varsigma:\mathbb{R}\rightarrow[0,1]$ such that $\varsigma(r)=1$ for $|r|<\frac{\epsilon}{2}$ and $\varsigma(r)=0$ for $r\geq\epsilon$. In the following we will always write $x=(x',x_{n+1})\in\mathbb{R}^{n+1}$ with $x'\in\mathbb{R}^{n}$,
\begin{equation}\label{1018-1}
\left\{
\begin{array}{ll}
\partial_{t}\tilde{u}_{12}(x,t)=\Delta^{\frac{1+s}{2}}\tilde{u}_{12}(x,t)+f(x,t)+g(x),
&in\ \mathbb{S}^{n}\times(0,T],\\
\tilde{u}_{12}(x,0)=0,
\end{array}
\right.
\end{equation}
we assume $g(x)=0$ and
\begin{align}\label{527-2}
\partial_{t}\tilde{u}_{12}(x,t)
=&2\int_{\mathbb{S}^{n}}\varsigma(|y'|)\frac{\tilde{u}_{12}(y,t)-\tilde{u}_{12}(x,t)}{|y-x|^{n+1+s}}d\mathcal{H}_{y}^{n}\notag\\
&+2\int_{\mathbb{S}^{n}}(1-\varsigma(|y'|))\frac{\tilde{u}_{12}(y,t)-\tilde{u}_{12}(x,t)}{|y-x|^{n+1+s}}d\mathcal{H}_{y}^{n}+f(x,t)\notag\\
:=&2\int_{\mathbb{S}^{n}}\varsigma(|y'|)\frac{\tilde{u}_{12}(y,t)-\tilde{u}_{12}(x,t)}{|y-x|^{n+1+s}}d\mathcal{H}_{y}^{n}+M_{1}(x,t)+f(x,t).
\end{align}
Since the function $1-\varsigma(|y'|)$ vanishes on $|y'|<\frac{\epsilon}{2}$ the above integral $M_{1}(x,t)$ is non-singular on $|y'|<\frac{\epsilon}{2}$ and we have
\begin{equation}\label{527-3}
\underset{0<t<T}\sup\|\varsigma(|4x'|)M_{1}(x,t)\|_{C^{\alpha}(\mathbb{S}^{n})}\leq C_{\epsilon}\underset{0<t<T}\sup\|u_{12}(x,t)\|_{C^{\alpha}(\mathbb{S}^{n}_{+})}.
\end{equation}
We write the $\mathbb{S}^{n}$ locally as a graph of the function $\varphi(x')=\sqrt{1-|x'|^{2}}$, $y,\ x\in\mathbb{S}^{n},$ $|y'|,\ |x'|<\epsilon$, we denote that
\[B_{\delta}=\{(x',x_{n+1})\in\mathbb{R}^{n+1}:|x'|<\epsilon,|x_{n+1}|<\epsilon\},\]
then we write
\begin{align*}
&\int_{\mathbb{S}^{n}}\varsigma(|y'|)\frac{\tilde{u}_{12}(y,t)-\tilde{u}_{12}(x,t)}{|y-x|^{n+1+s}}d\mathcal{H}_{y}^{n}\notag\\
=&\int_{\mathbb{S}^{n}}\sqrt{1-|y'|^{2}}\varsigma(|y'|)\frac{\tilde{u}_{12}(y,t)-\tilde{u}_{12}(x,t)}{|y-x|^{n+1+s}}d\mathcal{H}_{y}^{n}\notag\\
&+\int_{\mathbb{S}^{n}}(1-\sqrt{1-|y'|^{2}})\varsigma(|y'|)\frac{\tilde{u}_{12}(y,t)-\tilde{u}_{12}(x,t)}{|y-x|^{n+1+s}}d\mathcal{H}_{y}^{n}\notag\\
=&\int_{\mathbb{R}^{n}}\varsigma(|y'|)\frac{\tilde{u}_{12}(y',t)-\tilde{u}_{12}(x',t)}{|(y'-x')^{2}+(\varphi(y')-\varphi(x'))^{2}|^\frac{n+1+s}{2}}dy'\notag\\
&+\int_{\mathbb{R}^{n}}\frac{1-\sqrt{1-|y'|^{2}}}{\sqrt{1-|y'|^{2}}}\varsigma(|y'|)\frac{\tilde{u}_{12}(y',t)-\tilde{u}_{12}(x',t)}{|(y'-x')^{2}+(\varphi(y')-\varphi(x'))^{2}|^\frac{n+1+s}{2}}dy'.
\end{align*}
To shorten the notation, we write $\tilde{u}_{12}(y',t)=\tilde{u}_{12}((y',\varphi(y')),t)$ and similarly $f(x',t)$ and
\[K_{\varphi}(y',x')=\frac{1}{|(y'-x')^{2}+(\varphi(y')-\varphi(x'))^{2}|^{\frac{n+1+s}{2}}}.\]
Then we have
\begin{align*}
&\int_{\mathbb{S}^{n}}\varsigma(|y'|)\frac{\tilde{u}_{12}(y,t)-\tilde{u}_{12}(x,t)}{|y-x|^{n+1+s}}d\mathcal{H}_{y}^{n}\notag\\
=&\int_{\mathbb{R}^{n}}\varsigma(|y'|)(\tilde{u}_{12}(y',t)-\tilde{u}_{12}(x',t))K_{\varphi}(y',x')dy'\notag\\
&+\int_{\mathbb{R}^{n}}\frac{1-\sqrt{1-|y'|^{2}}}{\sqrt{1-|y'|^{2}}}\varsigma(|y'|)(\tilde{u}_{12}(y',t)-\tilde{u}_{12}(x',t))K_{\varphi}(y',x')dy'.
\end{align*}
Let us define $v(x',t)=\varsigma(4|x'|)\tilde{u}_{12}(x',t)$. Then we have
\begin{align*}
\partial_{t}v(x',t)=&\varsigma(4|x'|)\{\Delta^{\frac{1+s}{2}}\tilde{u}_{12}(x',t)+f(x',t)\}\notag\\
=&2\int_{\mathbb{R}^{n}}\varsigma(4|x'|)\varsigma(|y'|)(\tilde{u}_{12}(y',t)-\tilde{u}_{12}(x',t))K_{\varphi}(y',x')dy'\notag\\
&+2\int_{\mathbb{R}^{n}}\varsigma(4|x'|)\frac{1-\sqrt{1-|y'|^{2}}}{\sqrt{1-|y'|^{2}}}\varsigma(|y'|)(\tilde{u}_{12}(y',t)-\tilde{u}_{12}(x',t))K_{\varphi}(y',x')dy'\notag\\
&+\varsigma(4|x'|)M_{1}(x',t)+\varsigma(4|x'|)f(x',t)
\end{align*}
and write
\begin{align*}
&\varsigma(4|x'|)\varsigma(|y'|)(\tilde{u}_{12}(y',t)-\tilde{u}_{12}(x',t))\notag\\
=&\varsigma(|y'|)(v(y',t)-v(x',t))-\varsigma(|y'|)\tilde{u}_{12}(y',t)(\varsigma(4|y'|)-\varsigma(4|x'|))\notag\\
=&v(y',t)-v(x',t)-(1-\varsigma(|y'|))(v(y',t)-v(x',t))\notag\\
\ \ \ \ &-\varsigma(|y'|)\tilde{u}_{12}(y',t)(\varsigma(4|y'|)-\varsigma(4|x'|)),
\end{align*}
\[\int_{0}^{1}\frac{d}{d\xi}K_{\xi\varphi}d\xi=\frac{1}{|(y'-x')^{2}+(\varphi(y')-\varphi(x'))^{2}|^{\frac{n+1+s}{2}}}-\frac{1}{|y'-x'|^{n+1+s}},\]
where $K_{\xi\varphi}(y',x')=\frac{1}{|(y'-x')^{2}+\xi(\varphi(y')-\varphi(x'))^{2}|^{\frac{n+1+s}{2}}}$.
We organize the terms and write
\begin{align*}
\partial_{t}v(x',t)=&\Delta^{\frac{1+s}{2}}v(x',t)+2\int_{0}^{1}\int_{\mathbb{R}^{n}}(v(y',t)-v(x',t))\frac{d}{d\xi}K_{\xi\varphi} dy'd\xi\notag\\
&-2\int_{\mathbb{R}^{n}}(1-\varsigma(|y'|))(v(y',t)-v(x',t))K_{\varphi}dy'\notag\\
&-2\int_{\mathbb{R}^{n}}\varsigma(|y'|)\tilde{u}_{12}(y',t)(\varsigma(4|y'|)-\varsigma(4|x'|))K_{\varphi}dy'\notag\\
&+2\int_{\mathbb{R}^{n}}\varsigma(4|x'|)\frac{1-\sqrt{1-|y'|^{2}}}{\sqrt{1-|y'|^{2}}}\varsigma(|y'|)(\tilde{u}_{12}(y',t)-\tilde{u}_{12}(x',t))K_{\varphi}(y',x')dy'\notag\\
&+\varsigma(4|x'|)M_{1}(x',t)+\varsigma(4|x'|)f(x',t)\notag\\
:=&\Delta^{\frac{1+s}{2}}v(x',t)+M_{2}(x',t)-M_{3}(x',t)-M_{4}(x',t)+M_{5}(x',t)\notag\\
&+\varsigma(4|x'|)\{M_{1}(x',t)+f(x',t)\}.
\end{align*}
We need to estimate the $C^{\alpha}$-norms of $M_{2}(x',t),\ M_{3}(x',t),\ M_{4}(x',t)$ and $M_{5}(x',t)$.
First, let us consider the
$M_{2}(x',t):=2\int_{0}^{1}\int_{\mathbb{R}^{n}}(v(y',t)-v(x',t))\frac{d}{d\xi}K_{\xi\varphi} dy'd\xi$. Lemma \ref{14-1} also holds for $\Sigma=\mathbb{R}^{n}$ and $K_{\xi\varphi}$. Hence, we conclude by Lemma \ref{14-1} that
\begin{equation}\label{527-4}
\underset{0<t<T}\sup\|M_{2}(x',t)\|_{C^{\alpha}(\mathbb{R}^{n})}\leq C\underset{0<t<T}\sup\|v(x',t)\|_{C^{1+s+\alpha}(\mathbb{R}^{n})}.
\end{equation}
Similarly, $M_{4}(x',t)$, $M_{5}(x',t)$ also uses Lemma \ref{14-1} to obtain that
\begin{equation}\label{527-5}
\underset{0<t<T}\sup\|M_{4}(x',t)\|_{C^{\alpha}(\mathbb{R}^{n})}\leq C\underset{0<t<T}\sup\|\tilde{u}_{12}(x',t)\|_{C^{s+\alpha}(\mathbb{R}^{n})}.
\end{equation}
\begin{equation}\label{527-6}
\underset{0<t<T}\sup\|M_{5}(x',t)\|_{C^{\alpha}(\mathbb{R}^{n})}\leq C\underset{0<t<T}\sup\|\tilde{u}_{12}(x',t)\|_{C^{1+s+\alpha}(\mathbb{R}^{n})}.
\end{equation}
Since $1-\varsigma(|y'|)$ vanishes for $|y'|<\frac{\epsilon}{2}$, the above integral $M_{3}(x,t)$ is non-singular on $|y'|<\frac{\epsilon}{2}$ and we have
\begin{equation}\label{527-7}
\underset{0<t<T}\sup\|M_{3}(x',t)\|_{C^{\alpha}(\mathbb{R}^{n})}\leq C_{\epsilon}\underset{0<t<T}\sup\|\tilde{u}_{12}(x',t)\|_{C^{\alpha}(\mathbb{R}^{n})}.
\end{equation}
In summary, by \eqref{527-2}, \eqref{527-3},  \eqref{527-4}, \eqref{527-5}, \eqref{527-6}, \eqref{527-7} and the Theorem \ref{16-5} we can obtain that
\begin{align*}
\underset{0<t<T}\sup\|v(x',t)\|_{C^{1+s+\alpha}(\mathbb{R}^{n})}&\leq C_{\epsilon}\underset{0<t<T}\sup\|u_{12}(x,t)\|_{C^{\alpha}(\mathbb{S}^{n}_{+})}+\underset{0<t<T}\sup\|f(x',t)\|_{C^{\alpha}(\mathbb{R}^{n})}\notag\\
&\ \ \ +C\{\underset{0<t<T}\sup\|\tilde{u}_{12}(x',t)\|_{C^{s+\alpha}(\mathbb{R}^{n})}+\underset{0<t<T}\sup\|\tilde{u}_{12}(x',t)\|_{C^{1+s+\alpha}(\mathbb{R}^{n})}\}.
\end{align*}
According to the relationship between $v(x',t)$ and $\tilde{u}_{12}(x,t)$, we can obtain $\|\tilde{u}_{12}(x,t)\|_{C^{1+s+\alpha}(\mathbb{S}^{n}\cap B_{\frac{\epsilon}{8}})}\leq C\sup\|v(x',t)\|_{C^{1+s+\alpha}(\mathbb{R}^{n})}$ for $t\in(0,T]$. We use a standard covering argument to conclude that
\begin{align*}
\underset{0<t<T}\sup\|\tilde{u}_{12}(x,t)\|_{C^{1+s+\alpha}(\mathbb{S}^{n})}&\leq C_{\epsilon}\underset{0<t<T}\sup\|u_{12}(x,t)\|_{C^{\alpha}(\mathbb{S}^{n}_{+})}+\underset{0<t<T}\sup\|f(x,t)\|_{C^{\alpha}(\mathbb{S}^{n})}\notag\\
&\ \ \ +C\underset{0<t<T}\sup\|\tilde{u}_{12}(x,t)\|_{C^{s+\alpha}(\mathbb{S}^{n})}.
\end{align*}
Furthermore, by the interpolation inequality \ref{825-1}, we can obtain
\begin{align*}
\underset{0<t<T}\sup\|\tilde{u}_{12}(x,t)\|_{C^{1+s+\alpha}(\mathbb{S}^{n})}&\leq C_{\epsilon}\underset{0<t<T}\sup\|\tilde{u}_{12}(x,t)\|_{C^{0}(\mathbb{S}^{n})}+\underset{0<t<T}\sup\|f(x,t)\|_{C^{\alpha}(\mathbb{S}^{n})}.
\end{align*}
More precisely,
\begin{align}\label{829-1}
\underset{0<t<T}\sup\|u_{12}(x,t)\|_{C^{1+s+\alpha}(\mathbb{S}^{n}_{+})}&\leq C_{\epsilon}\underset{0<t<T}\sup\|u_{12}(x,t)\|_{C^{0}(\mathbb{S}^{n}_{+})}+\underset{0<t<T}\sup\|f(x,t)\|_{C^{\alpha}(\mathbb{S}^{n}_{+})}.
\end{align}
For \eqref{917-1}, let
$$\tilde{u}_{11}(x,t)=e^{\lambda t}u_{11}(x,t),\ U(x,t)=e^{\lambda t}(\tilde{u}_{11}(x,t)-u_{11}(x,t)),$$
where $\lambda>0$ is a constant. According to the definition of $U(x,t)$ and \eqref{917-1}, $U(x,t)$ satisfies
\begin{equation*}
\left\{
\begin{array}{ll}
\partial_{t}U(x,t)=\Delta^{\frac{1+s}{2}}U(x,t)+2\lambda U+\lambda\tilde{u}_{11}(x,t),
&in\ \mathbb{S}^{n}_{+}\times(0,T],\\
U(x,0)=0.\\
\frac{\partial U(x,t)}{\partial\eta}=0,
&on\ \partial\mathbb{S}^{n}_{+}\times[0,T).
\end{array}
\right.
\end{equation*}
We need to extend $U(x,t)$ to a function defined on $\mathbb{S}^{n}$ while preserving a certain degree of regularity. We denote the reflection of $x\in\mathbb{S}^{n}_{-}$ with respect to the hyperplane $\partial\mathbb{S}^{n}_{+}$ as
\begin{equation*}
x^{\star}=(x_{1},\ldots,x_{n},-x_{n+1})\in\mathbb{S}^{n}_{+}.
\end{equation*}
We define the extended function as follows
\begin{equation*}
\tilde{U}(x)= \begin{cases}
U(x),& if\ x\in\mathbb{S}^{n}_{+};\\
U(x^{\star}),& if\ x\in\mathbb{S}^{n}_{-}.
\end{cases}
\end{equation*}
Let $x\in\partial\mathbb{S}^{n}_{+}$, when approaching $x$ from the upper hemisphere, $\tilde{U}(x,t)=U(x,t)$. When approaching $x$ from the lower hemisphere, since $x\in\partial\mathbb{S}^{n}_{+}$, its reflection point is itself, so $\tilde{U}(x,t)=U(x^{\star},t)=U(x,t)$. Therefore, the function $\tilde{U}(x,t)$ is continuous. Since the function values defined from both the upper and lower hemispheres are the same on $\partial\mathbb{S}^{n}_{+}$, the tangential derivative is naturally continuous. Moreover, the Neumann boundary condition exactly ensures that after extension, the normal derivative of the function at the boundary is continuous. Through extension, we obtain the function $\tilde{U}(x,t)$, which is a $C^{1}$ function on the entire sphere $\mathbb{S}^{n}$. Therefore, $\tilde{U}(x,t)$ satisfies the following equation
\begin{equation*}
\left\{
\begin{array}{ll}
\partial_{t}\tilde{U}(x,t)=\Delta^{\frac{1+s}{2}}\tilde{U}(x,t)+2\lambda \tilde{U}+\lambda\tilde{u}_{11}(x,t),
&in\ \mathbb{S}^{n}\times(0,T],\\
\tilde{U}(x,0)=0.
\end{array}
\right.
\end{equation*}
By \eqref{1018-1}, we can obtain
\begin{align*}
\underset{0<t<T}\sup\|\tilde{U}(x,t)\|_{C^{1+s+\alpha}(\mathbb{S}^{n})}&\leq C_{\epsilon}\underset{0<t<T}\sup\|\tilde{U}(x,t)\|_{C^{0}(\mathbb{S}^{n})}+\underset{0<t<T}\sup\|\tilde{U}(x,t)\|_{C^{\alpha}(\mathbb{S}^{n})}\notag\\
&+
\underset{0<t<T}\sup\|\tilde{u}_{11}(x,t)\|_{C^{\alpha}(\mathbb{S}^{n})}.
\end{align*}
More precisely,
\begin{align}\label{1018-2}
\underset{0<t<T}\sup\|u_{11}(x,t)\|_{C^{1+s+\alpha}(\overline{\mathbb{S}^{n}_{+}})}&\leq \|u_{0}(x)\|_{C^{\alpha}(\overline{\mathbb{S}^{n}_{+}})}.
\end{align}
For \eqref{811-2}, we wish to prove that
\begin{align}\label{818-1}
\underset{0<t<T}\sup\|u_{2}(\cdot,t)\|_{C^{1+s+\alpha}(\overline{\mathbb{S}_{+}^{n}})}\leq \underset{0<t<T}\sup
\|\cos\theta\sqrt{u^{2}(x,t)+|\nabla_{\tau}u(x,t)|^{2}}\|_{C^{s+\alpha}(\partial\mathbb{S}_{+}^{n})}.
\end{align}
Let us suppose that \eqref{818-1} is false. The following proof is derived from the Theorem 4.1 in \cite{NG2015}.That is to say, for $k\in\mathbb{N}$, there exist $u_{2k}\in C^{1+s+\alpha}(\overline{\mathbb{S}_{+}^{n}})$ such that
\begin{equation*}
\left\{
\begin{array}{ll}
\partial_{t}u_{2k}(x,t)=\Delta^{\frac{1+s}{2}}u_{2k}(x,t),
&in\ \mathbb{S}^{n}_{+}\times(0,T],\\
u_{2k}(x,0)=0.\\
\frac{\partial u_{2k}(x,t)}{\partial\eta}=\cos\theta\sqrt{u_{2k}^{2}(x,t)+|\nabla_{\tau}u_{2k}(x,t)|^{2}},
&on\ \partial\mathbb{S}^{n}_{+}\times[0,T),
\end{array}
\right.
\end{equation*}
\begin{align}\label{825-2}
\|u_{2k}\|_{C^{1+s+\alpha}(\mathbb{S}_{+}^{n})}=1,
\end{align}
\begin{align*}
\underset{0<t<T}\sup\|u_{2k}(\cdot,t)\|_{C^{1+s+\alpha}(\overline{\mathbb{S}_{+}^{n}})}> k\underset{0<t<T}\sup
\|\cos\theta\sqrt{u_{2k}^{2}(x,t)+|\nabla_{\tau}u_{2k}(x,t)|^{2}}\|_{C^{s+\alpha}(\partial\mathbb{S}_{+}^{n})}.
\end{align*}
By \eqref{825-2} and the Ascoli-Arzel$\grave{a}$ theorem, since $u_{2k}\neq0$, we get a subsequence $\{u_{2k_{h}}\}$ in ${C^{1+s+\alpha}(\overline{\mathbb{S}_{+}^{n}})}$ such that
\begin{align*}
u_{2k_{h}}&\rightarrow u_{2k_{0}}\ in\ C^{0}(\overline{\mathbb{S}_{+}^{n}})\notag\\
\nabla u_{2k_{h}}&\rightarrow u_{2k_{1}}\ in\ C^{0}(\overline{\mathbb{S}_{+}^{n}}),
\end{align*}
which implies that
\begin{equation*}
\left\{
\begin{array}{ll}
\partial_{t}u_{2k_{0}}(x,t)=\Delta^{\frac{1+s}{2}}u_{2k_{0}}(x,t)=0,
&in\ \mathbb{S}^{n}_{+}\times(0,T],\\
u_{2k_{0}}(x,0)=0.\\
\frac{\partial u_{2k_{0}}(x,t)}{\partial\eta}=\cos\theta\sqrt{u_{2k_{0}}^{2}(x,t)+|u_{2k_{1}}(x,t)|^{2}}=0,
&on\ \partial\mathbb{S}^{n}_{+}\times[0,T),
\end{array}
\right.
\end{equation*}
when $k\rightarrow\infty$ which implies $u_{2k_{0}}=0$. Comparing with \eqref{825-2}, we get a contradiction because
\[1=\underset{k\rightarrow\infty}\lim\|u_{2k}\|_{C^{1+s+\alpha}(\overline{\mathbb{S}_{+}^{n}})}=\|u_{2k_{0}}\|_{C^{1+s+\alpha}(\overline{\mathbb{S}_{+}^{n}})}=0.\]
Hence, the desired estimate holds. Combining \eqref{829-1}, \eqref{1018-2} and \eqref{818-1}, we obtain the first inequality.
\end{proof}

\section{Proof of the Theorem \eqref{1028-1}}
\ \ \ \ By the Proposition \ref{1129-1} we need to prove that the equation \eqref{301} has a unique solution $\rho\in C(\mathbb{S}_{+}^{n}\times[0,T])\bigcap C^{\infty}(\mathbb{S}_{+}^{n}\times(0,T])$ with $\rho(x,0)=\rho_{0}$ for $x\in\mathbb{S}^{n}_{+}$.

\textit{\bf{Step 1}}. (basic estimates)
On $\mathbb{S}_{+}^{n}\times[0,T)$, the formula \eqref{5} can be written as
\begin{equation*}
\partial_{t}\rho(x,t)=\Delta^{\frac{1+s}{2}}\rho(x,t)+P(x,\rho(\cdot,t),\nabla \rho(\cdot,t))-H_{\mathbb{S}_{+}^{n}}^{s},
\end{equation*}
where the remainder term is defined for a generic function $\rho\in C^{\infty}$ as
\begin{align}\label{920-1}
P(x,\rho(\cdot,t),\nabla \rho(\cdot,t))=&(A(x,\rho,\nabla_{\tau}\rho)-1)(\Delta^{\frac{1+s}{2}}\rho(x,t)-H_{\mathbb{S}_{+}^{n}}^{s})\notag\\
&+(A(x,\rho,\nabla_{\tau}\rho))[R_{1,\rho}(x)+R_{2,\rho}(x)(\rho(x,t)-1)],
\end{align}
where $A(x,\rho,\nabla_{\tau}\rho):=\frac{\sqrt{\rho^{2}(x)+|\nabla_{\tau}\rho(x)|^{2}}}{\rho(x)}$,
and $R_{1,\rho}$ and $R_{2,\rho}$ are defined in \eqref{302} and \eqref{303} respectively.

Assume that
\[\|\rho\|_{C^{1+s+\alpha}(\overline{\mathbb{S}_{+}^{n}})}\leq C_{1},\ and\ \|\rho\|_{C^{0}(\overline{\mathbb{S}_{+}^{n}})}\leq C_{2}\]
and prove that this implies
\begin{equation}\label{902-1}
\|P(x,\rho(\cdot,t),\nabla \rho(\cdot,t))\|_{C^{\alpha}(\overline{\mathbb{S}_{+}^{n}})}\leq C_{12},
\end{equation}
where $C_{12}$ depends on $\|\rho\|_{C^{1+s+\alpha}(\overline{\mathbb{S}_{+}^{n}})}$ and $\|\rho\|_{C^{0}(\overline{\mathbb{S}_{+}^{n}})}$.
By the Lemma \ref{919-1},  we already know that
\[\|R_{1,\rho}(x)\|_{C^{\alpha}(\mathbb{S}_{+}^{n})}\leq C \|\rho\|_{C^{1+s+\alpha}(\mathbb{S}_{+}^{n})}\ \ and\ \ \|R_{2,\rho}(x)\|_{C^{\alpha}(\mathbb{S}_{+}^{n})}\leq C_{\xi,\rho}.\]
The $\|\Delta^{\frac{1+s}{2}}\rho(x,t)\|_{C^{\alpha}(\mathbb{S}_{+}^{n})}$ follows from \eqref{906-1} by choosing $h(y)=\rho(y)$, then $\|\Delta^{\frac{1+s}{2}}\rho(x,t)\|_{C^{\alpha}(\mathbb{S}_{+}^{n})}\leq
\|\rho(x,t)\|_{C^{1+s+\alpha}(\mathbb{S}_{+}^{n})}$. In fact, $\|H_{\mathbb{S}_{+}^{n}}^{s}\|_{C^{\alpha}(\mathbb{S}_{+}^{n})}$ is uniformly bounded and $$\|A(x,\rho,\nabla_{\tau}\rho)\|_{C^{\alpha}(\mathbb{S}_{+}^{n})}\leq
\|\rho(x,t)\|_{C^{1+\alpha}(\mathbb{S}_{+}^{n})}.$$
By the interpolation inequality in Lemma \ref{825-1} on $\mathbb{S}_{+}^{n}$, we estimate
$$\|A(x,\rho,\nabla_{\tau}\rho)\|_{C^{\alpha}(\mathbb{S}_{+}^{n})}\leq
C'\|\rho(x,t)\|_{C^{0}(\mathbb{S}_{+}^{n})}\|\rho(x,t)\|_{C^{1+s+\alpha}(\mathbb{S}_{+}^{n})}.$$
Hence we have \eqref{902-1}.

Next, wo need to linearize $P(x,\rho(\cdot,t),\nabla \rho(\cdot,t))$, and this step will play a role in proving the existence of strong solutions. In fact, we need to show that if $v_{1},v_{2}\in C^{1+s+\alpha}(\mathbb{S}_{+}^{n})$, then
\begin{align}\label{921-3}
&\|P(x,v_{2}(\cdot,t),\nabla v_{2}(\cdot,t))-P(x,v_{1}(\cdot,t),\nabla v_{1}(\cdot,t))\|_{C^{\alpha}(\mathbb{S}_{+}^{n})}\notag\\
\leq&C_{\xi,\eta,v_{1}}\|v_{2}-v_{1}\|_{C^{1+s+\alpha}(\mathbb{S}_{+}^{n})}+C_{\xi}\|v_{2}-v_{1}\|_{C^{0}(\mathbb{S}_{+}^{n})}.
\end{align}
Analogous to the equation \eqref{M1}, let $w=v_{2}-v_{1}$, then
\begin{align*}
P(x,v_{2}(\cdot,t),\nabla v_{2}(\cdot,t))-P(x,v_{1}(\cdot,t),\nabla v_{1}(\cdot,t))=\int_{0}^{1}\frac{d}{d\xi}P(x,v_{1}+\xi w,\nabla(v_{1}+\xi w))d\xi.
\end{align*}
To shorten the notation, we write $v_{\xi}=v_{1}+\xi w$. By recalling the definition of $P$ in \eqref{920-1} we obtain by differentiating
\begin{align*}
&\frac{d}{d\xi}P(x,v_{1}+\xi w,\nabla(v_{1}+\xi w))\notag\\
=&\frac{d}{d\xi}A(x,v_{\xi},\nabla_{\tau}v_{\xi})\{\Delta^{\frac{1+s}{2}}v_{\xi}(x,t)-H_{\mathbb{S}_{+}^{n}}^{s}
+R_{1,v_{\xi}}(x)+R_{2,v_{\xi}}(x)(v_{\xi}(x,t)-1)\}\notag\\
&+(A(x,v_{\xi},\nabla_{\tau}v_{\xi})-1)\{\Delta^{\frac{1+s}{2}}w(x,t)\}\notag\\
&+A(x,v_{\xi},\nabla_{\tau}v_{\xi})[\frac{d}{d\xi}R_{1,v_{\xi}}(x)+\frac{d}{d\xi}R_{2,v_{\xi}}(x)(v_{\xi}(x,t)-1)+R_{2,v_{\xi}}(x)(w(x,t)-1)].
\end{align*}
By the Lemma \ref{919-1},
$$\|R_{1,v_{\xi}}(x)\|_{C^{\alpha}(\mathbb{S}_{+}^{n})}\leq C \|v_{\xi}\|_{C^{1+s+\alpha}(\mathbb{S}_{+}^{n})}<C\ \ and\ \ \|R_{2,v_{\xi}}(x)\|_{C^{\alpha}(\mathbb{S}_{+}^{n})}\leq C_{\xi,\rho},\ \xi\in[0,1].$$
By the Corollary \ref{921-2},
\begin{align*}
\|\frac{d}{d\xi}R_{1,v_{\xi}}(x)\|_{C^{\alpha}(\mathbb{S}_{+}^{n})}&\leq C_{\xi,\eta,v_{1}}\|w(x)\|_{C^{1+s+\alpha}(\mathbb{S}_{+}^{n})},\notag\\
\|\frac{d}{d\xi}R_{2,v_{\xi}}(x)\|_{C^{\alpha}(\mathbb{S}_{+}^{n})}&\leq C_{w,\rho}.
\end{align*}
The $\|\Delta^{\frac{1+s}{2}}v_{\xi}(x,t)\|_{C^{\alpha}(\mathbb{S}_{+}^{n})}$ follows from \eqref{906-1} by choosing $h(y)=v_{\xi}(y,t)$,  then $\|\Delta^{\frac{1+s}{2}}v_{\xi}(x,t)\|_{C^{\alpha}(\mathbb{S}_{+}^{n})}\leq
\|v_{\xi}(x,t)\|_{C^{1+s+\alpha}(\mathbb{S}_{+}^{n})}$. Similarly, we can obtain $\|\Delta^{\frac{1+s}{2}}w(x,t)\|_{C^{\alpha}(\mathbb{S}_{+}^{n})}\leq
\|w(x,t)\|_{C^{1+s+\alpha}(\mathbb{S}_{+}^{n})}$.

By the interpolation inequality in Lemma \ref{825-1} on $\mathbb{S}_{+}^{n}$, we estimate
$$\|A(x,v_{\xi},\nabla_{\tau}v_{\xi})\|_{C^{\alpha}(\mathbb{S}_{+}^{n})}\leq
C'\|v_{\xi}(x,t)\|_{C^{0}(\mathbb{S}_{+}^{n})}\|v_{\xi}(x,t)\|_{C^{1+s+\alpha}(\mathbb{S}_{+}^{n})},$$
since $A(x,v_{\xi},\nabla_{\tau}v_{\xi})$ is smooth, we can obtain
$$\frac{d}{d\xi}\|A(x,v_{\xi},\nabla_{\tau}v_{\xi})\|_{C^{\alpha}(\mathbb{S}_{+}^{n})}\leq
C_{v_{\xi}}\|w\|_{C^{\alpha}(\mathbb{S}_{+}^{n})}.$$
Combining all the estimates calculated above, we can obtain
\begin{align*}
&\|P(x,v_{2}(\cdot,t),\nabla v_{2}(\cdot,t))-P(x,v_{1}(\cdot,t),\nabla v_{1}(\cdot,t))\|_{C^{\alpha}(\mathbb{S}_{+}^{n})}\notag\\
\leq&\int_{0}^{1}\|\frac{d}{d\xi}P(x,v_{1}+\xi w,\nabla(v_{1}+\xi w))\|_{C^{\alpha}(\mathbb{S}_{+}^{n})}d\xi\notag\\
\leq&C_{w,\eta,\rho}\|w(x)\|_{C^{1+s+\alpha}(\mathbb{S}_{+}^{n})}+
C_{v_{\xi}}\|w\|_{C^{\alpha}(\mathbb{S}_{+}^{n})}.
\end{align*}
Combining the above equation with the interpolation inequality in Lemma \ref{825-1} on $\mathbb{S}_{+}^{n}$ yields \eqref{921-3}.

\textit{\bf{Step 2}}. (Existence and Uniqueness of the strong solution)
Let $X$ be the space of the function $u\in C(\mathbb{S}_{+}^{n}\times[0,T])$. In $X$, $u$ satisfies
$$u\in C^{1+s+\alpha}(\overline{\mathbb{S}_{+}^{n}}),\ \ u\in C^{0}(\overline{\mathbb{S}_{+}^{n}}),\ \ \partial_{t}u\in C^{\alpha}(\overline{\mathbb{S}_{+}^{n}}).$$
Next, we provide existence and uniqueness of the strong solution to PDE \eqref{301}, following the proof of Theorem 5.1 of \cite{VD20}.

We define a map $\mathcal{L}$, where $\mathcal{L}$ satisfies that $\mathcal{L}(\rho)=u(\rho\in X)$ is the solution to the following equation
\begin{equation}\label{901-1}
\left\{
\begin{array}{ll}
\partial_{t}u(x,t)+(-\Delta)^{\frac{1+s}{2}}u(x,t)=P(x,\rho(x,t),\nabla \rho(x,t))-H_{\mathbb{S}_{+}^{n}}^{s},
&in\ \mathbb{S}^{n}_{+}\times[0,T),\\
u(x,0)=\rho_{0}(x),\\
\frac{\partial u(x,t)}{\partial\eta}=\cos\theta\sqrt{\rho^{2}(x,t)+|\nabla_{\tau}\rho(x,t)|^{2}},
&on\ \partial\mathbb{S}^{n}_{+}\times[0,T).
\end{array}
\right.
\end{equation}
In other words, a fixed point of $\mathcal{L}:X\rightarrow X$ is a strong solution of  \eqref{301}. Here, $\rho:\mathbb{S}^{n}_{+}\times[0,T)\rightarrow\mathbb{R}$ is Lipschitz continuous in time, $C^{1+s+\alpha}$-regular in space, satisfies the equation \eqref{301} for almost every $t\in[0,T)$ and is a strong solution. According to the conditions of the fixed point theorem, we first need to argue that $\mathcal{L}$ is well-defined, that is, $u$ in the above equation \eqref{901-1} belongs to $X$.

By the Theorem \ref{812-1}, we already know that
\begin{align*}
\underset{0<t<T}\sup\|u\|_{C^{0}(\overline{\mathbb{S}_{+}^{n}})}\leq& \|\rho_{0}\|_{C^{0}(\mathbb{S}_{+}^{n})}+C(1+T)\{
\|P(x,\rho(x,t),\nabla \rho(x,t))\|_{C^{0}(\mathbb{S}_{+}^{n})}\notag\\
&+\|H_{\mathbb{S}_{+}^{n}}^{s}\|_{C^{0}(\mathbb{S}_{+}^{n})}
+\underset{0<t<T}\sup
\|\cos\theta\sqrt{\rho^{2}(x,t)+|\nabla_{\tau}\rho(x,t)|^{2}}\|_{C^{0}(\partial\mathbb{S}_{+}^{n})}\},
\end{align*}
where we assume that $\rho_{0}$ and $\rho\in X$ are smooth. By \eqref{902-1}, if $\underset{0<t<T}\sup\|\rho\|_{C^{1+s+\alpha}(\mathbb{S}_{+}^{n})}\leq C_{1},\ and\ \underset{0<t<T}\sup\|\rho\|_{C^{0}(\mathbb{S}_{+}^{n})}\leq C_{2}$,
\begin{equation*}
\|P(x,\rho(\cdot,t),\nabla \rho(\cdot,t))\|_{C^{\alpha}(\mathbb{S}_{+}^{n})}\leq C_{12},
\end{equation*}
for $t\in[0,T]$, where $C_{12}$ depends on $\|\rho\|_{C^{1+s+\alpha}(\overline{\mathbb{S}_{+}^{n}})}$ and $\|\rho\|_{C^{0}(\overline{\mathbb{S}_{+}^{n}})}$. At this point, if $\|\rho_{0}\|_{C^{0}(\mathbb{S}_{+}^{n})}<C_{3}$, then we have
$$\underset{0<t<T}\sup\|u\|_{C^{0}(\overline{\mathbb{S}_{+}^{n}})}\leq C_{3}+C(1+T)(C_{12}+C)<C,$$
when $T<\infty$ is bounded. This implies $u\in C^{0}(\overline{\mathbb{S}_{+}^{n}})$.

By the Theorem \ref{812-1},
\begin{align*}
\underset{0<t<T}\sup\|u(\cdot,t)\|_{C^{1+s+\alpha}(\overline{\mathbb{S}_{+}^{n}})}\leq& C\|\rho_{0}\|_{C^{\alpha}(\mathbb{S}_{+}^{n})}+C(1+T)\{
\|P(x,\rho(\cdot,t),\nabla \rho(\cdot,t))\|_{C^{\alpha}(\mathbb{S}_{+}^{n})}\notag\\
&+\|H_{\mathbb{S}_{+}^{n}}^{s}\|_{C^{1+s+\alpha}(\mathbb{S}_{+}^{n})}\}\notag\\
&+\underset{0<t<T}\sup
\|\cos\theta\sqrt{\rho^{2}(x,t)+|\nabla_{\tau}\rho(x,t)|^{2}}\|_{C^{s+\alpha}(\partial\mathbb{S}_{+}^{n})}.
\end{align*}
At this point, if $\rho_{0}\in C^{1+s+\alpha}(\mathbb{S}_{+}^{n})$, then we have
$$\underset{0<t<T}\sup\|u(\cdot,t)\|_{C^{1+s+\alpha}(\overline{\mathbb{S}_{+}^{n}})}<C+C(1+T)(C_{12}+C)<C$$
when $T<\infty$ is bounded. This implies $u\in C^{1+s+\alpha}(\overline{\mathbb{S}_{+}^{n}})$. The bound of $\underset{0<t<T}\sup\|\partial_{t}u(\cdot,t)\|_{C^{\alpha}(\overline{\mathbb{S}_{+}^{n}})}$ follows from \eqref{902-1}, \eqref{901-1}, and \eqref{906-1} by choosing $h(y)=u(y)$
\begin{align*}
&\underset{0<t<T}\sup\|\partial_{t}u(\cdot,t)\|_{C^{\alpha}(\overline{\mathbb{S}_{+}^{n}})}\notag\\
\leq &C\underset{0<t<T}\sup\{\|u(\cdot,t)\|_{C^{1+s+\alpha}(\overline{\mathbb{S}_{+}^{n}})}+\|P(x,\rho(\cdot,t),\nabla \rho(\cdot,t))\|_{C^{\alpha}(\mathbb{S}_{+}^{n})}\}<C.
\end{align*}
Hence, $\mathcal{L}:X\rightarrow X$ is well defined.

Next, we want to prove that $\mathcal{L}:X\rightarrow X$ is a contraction with the following norm
$$\|u(\cdot,t)\|_{X}:=G\underset{0<t<T}\sup\{\|u(\cdot,t)\|_{C^{1+s+\alpha}(\overline{\mathbb{S}_{+}^{n}})}+\|u(\cdot,t)\|_{C^{0}(\overline{\mathbb{S}_{+}^{n}})}\}$$
here, $G$ denotes a relatively large constant that is to be determined. This is primarily for better control of the norm. Let us fix $\rho_{1},\rho_{2}\in X$, denote $u_{1}=\mathcal{L}[\rho_{1}]$, $u_{2}=\mathcal{L}[\rho_{2}]$, then the function $v=u_{2}-u_{1}$ is a solution of the following equation
\begin{equation}\label{921-4}
\left\{
\begin{array}{ll}
\partial_{t}v(x,t)+(-\Delta)^{\frac{1+s}{2}}v(x,t)=P(x,\rho_{2}(\cdot,t),\nabla \rho_{2}(\cdot,t))\\
\ \ \ \ \ \ \ \ -P(x,\rho_{1}(\cdot,t),\nabla\rho_{1}(\cdot,t)),
&in\ \mathbb{S}^{n}_{+}\times[0,T),\\
v(x,0)=0,\\
\frac{\partial v(x,t)}{\partial\eta}=\cos\theta\{\sqrt{\rho_{2}^{2}(x,t)+|\nabla_{\tau}\rho_{2}(x,t)|^{2}}\\
\ \ \ \ \ \ \ \ -\sqrt{\rho_{1}^{2}(x,t)+|\nabla_{\tau}\rho_{1}(x,t)|^{2}}\},
&on\ \partial\mathbb{S}^{n}_{+}\times[0,T).
\end{array}
\right.
\end{equation}
Let us denote $w=\rho_{2}-\rho_{1}$, and set $v_{1}=\rho_{1},v_{2}=\rho_{2}$ in \eqref{921-3} to yield that
\begin{align*}
&\|P(x,\rho_{2}(\cdot,t),\nabla \rho_{2}(\cdot,t))-P(x,\rho_{1}(\cdot,t),\nabla \rho_{1}(\cdot,t))\|_{C^{\alpha}(\mathbb{S}_{+}^{n})}\notag\\
\leq&C\|w\|_{C^{1+s+\alpha}(\mathbb{S}_{+}^{n})}+C\|w\|_{C^{0}(\mathbb{S}_{+}^{n})}.
\end{align*}
Combining \eqref{921-4} and the Theorem \ref{812-1},
\begin{align*}
\underset{0<t<T}\sup\|v\|_{C^{0}(\overline{\mathbb{S}_{+}^{n}})}\leq& C(1+T)(C\|w\|_{C^{1+s+\alpha}(\mathbb{S}_{+}^{n})}+C\|w\|_{C^{0}(\mathbb{S}_{+}^{n})}\notag\\
&+\underset{0<t<T}\sup
\|\cos\theta\{\sqrt{\rho_{2}^{2}(x,t)+|\nabla_{\tau}\rho_{2}(x,t)|^{2}}\notag\\
&-\sqrt{\rho_{1}^{2}(x,t)+|\nabla_{\tau}\rho_{1}(x,t)|^{2}}\}\|_{C^{0}(\partial\mathbb{S}_{+}^{n})})\notag\\
\leq& CT(C\|w\|_{C^{1+s+\alpha}(\mathbb{S}_{+}^{n})}+C\|w\|_{C^{0}(\mathbb{S}_{+}^{n})})
\end{align*}
and
\begin{align*}
\underset{0<t<T}\sup\|v(\cdot,t)\|_{C^{1+s+\alpha}(\overline{\mathbb{S}_{+}^{n}})}\leq& (C\|w\|_{C^{1+s+\alpha}(\mathbb{S}_{+}^{n})}+C\|w\|_{C^{0}(\mathbb{S}_{+}^{n})})\notag\\
&+\underset{0<t<T}\sup
\|\cos\theta\{\sqrt{\rho_{2}^{2}(x,t)+|\nabla_{\tau}\rho_{2}(x,t)|^{2}}\notag\\
&-\sqrt{\rho_{1}^{2}(x,t)+|\nabla_{\tau}\rho_{1}(x,t)|^{2}}\}\|_{C^{s+\alpha}(\partial\mathbb{S}_{+}^{n})}\notag\\
\leq&
T(C\|w\|_{C^{1+s+\alpha}(\mathbb{S}_{+}^{n})}+C\|w\|_{C^{0}(\mathbb{S}_{+}^{n})}).
\end{align*}
By choosing $G>\frac{C}{\epsilon}$ and $T<\frac{1}{G}$, we obtain the following two inequalities
\begin{align*}
G\underset{0<t<T}\sup\{\|v(\cdot,t)\|_{C^{1+s+\alpha}(\overline{\mathbb{S}_{+}^{n}})}+\|v(\cdot,t)\|_{C^{0}(\overline{\mathbb{S}_{+}^{n}})}\}
\leq&\epsilon G(\|w\|_{C^{1+s+\alpha}(\mathbb{S}_{+}^{n})}+\|w\|_{C^{0}(\mathbb{S}_{+}^{n})})
\end{align*}
and
\begin{align*}
\|u_{2}-u_{1}\|_{X}\leq\epsilon\|\rho_{2}-\rho_{1}\|_{X}
\end{align*}
where $\epsilon$ is small enough. This shows that $\mathcal{L}:X\rightarrow X$ is a contraction, and by the standard fixed point theory, it follows that the equation \ref{301} has a unique strong solution.

\textit{\bf{Step 3}}. (Higher order regularity)
%Next, we need to prove the smoothness of $\rho$, that is, we need to prove that the following equation holds
%\begin{equation}
%\underset{0<t<T}\sup(t^{k}\|\rho(\cdot,t)\|_{C^{k+s+\alpha}(\overline{\mathbb{S}_{+}^{n}})})\leq %\Lambda_{k},
%\end{equation}
%where $k\in\mathbb{N}$, $\Lambda_{k}$ is a positive constant.
\cite{VD20} shows that it suffices to prove Schauder estimates, and then combining with induction one can verify the $C^{k+s+\alpha}$ estimate and higher order regularity estimates for the equation \eqref{301}, thus establishing the short time existence of smooth solutions for the fractional mean curvature flow \eqref{1}.

\section{Appendix}
In this paper, we use some lemmas in \cite{VD20}, and we write them in the appendix for the convenience of readers, here $\Sigma\subset\mathbb{R}^{n+1}$ is a smooth compact hypersurface.
\begin{defn}\label{D1}(\cite{VD20})
Let $\kappa>0$ and $K:\Sigma\times\Sigma\rightarrow\mathbb{R}\cup\{\pm\infty\}$. We say that $K\in\mathcal{S}_{\kappa}$ if the following three conditions hold:\\
(i) $K$ is continuous at every $y,x\in\Sigma,\ y\neq x$, and it holds
\[|K(y,x)|\leq\frac{\kappa}{|y-x|^{n+1+s}}.\]
(ii) The function $x\mapsto K(y,x)$ is differentiable at every $y,x\in\Sigma,\ y\neq x$, and
\[|\nabla_{x}K(y,x)|\leq\frac{\kappa}{|y-x|^{n+2+s}}.\]
(iii) The function
\[\psi(x):=\int_{\Sigma}(y-x)K(y,x)d\mathcal{H}_{y}^{n}\]
is H$\ddot{o}$lder continuous with $\|\psi\|_{C^{\alpha}(\Sigma)}\leq\kappa.$
\end{defn}
\begin{lem}(\cite{VD20})\label{518-1}
Assume that $K:\Sigma\times\Sigma\rightarrow\mathbb{R}\cup\{\pm\infty\}$ satisfies the conditions (i) and (ii) in Definition \eqref{D1} with constant $\kappa>0$. Assume that $F\in C(\Sigma\times\Sigma)$ satisfies the following:\\
(i) For all $y,x\in\Sigma$ it holds
\[|F(y,x)|\leq\kappa_{0}|y-x|^{1+s+\alpha}.\]
(ii) For all $y,x\in\Sigma$ with $|y-x|\geq2|z-x|$ it holds
\[|F(y,z)-|F(y,x)|\leq\kappa_{0}|z-x|^{s+\alpha}|y-x|.\]
Then the function
\[\psi(x)=\int_{\Sigma}F(y,x)K(y,x)d\mathcal{H}_{y}^{n}\]
is H$\ddot{o}$lder continuous with $\|\psi\|_{C^{\alpha}(\Sigma)}\leq\kappa\kappa_{0}.$
\end{lem}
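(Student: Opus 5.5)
The plan is the classical Schauder splitting at scale $d:=|z-x|$, with the surface $\Sigma$ entering only through Ahlfors regularity. Since $\Sigma$ is smooth and compact, there is $C_{\Sigma}$ with $\mathcal{H}^{n}(\Sigma\cap B_{r}(x))\leq C_{\Sigma}r^{n}$ for all $x\in\Sigma$ and $r>0$. Integrating in dyadic shells (as in the proof of Lemma \ref{521-1}) then gives, for $\beta>0$ and $\gamma>0$,
\[\int_{\Sigma\cap B_{r}(x)}|y-x|^{\beta-n}d\mathcal{H}^{n}_{y}\leq Cr^{\beta},\qquad \int_{\Sigma\setminus B_{r}(x)}|y-x|^{-n-\gamma}d\mathcal{H}^{n}_{y}\leq Cr^{-\gamma}.\]
These two bounds are all the geometry I will use. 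The constant in the conclusion $\|\psi\|_{C^{\alpha}}\leq\kappa\kappa_{0}$ is understood up to a factor depending on $n,s,\alpha,\Sigma$.

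\textbf{Sup bound.} By (i) of Definition \ref{D1} and assumption (i) on $F$, $|F(y,x)K(y,x)|\leq\kappa\kappa_{0}|y-x|^{\alpha-n}$. This is integrable by the first bound with $r=\operatorname{diam}\Sigma$, so $\|\psi\|_{C^{0}}\leq C\kappa\kappa_{0}$. The same estimate disposes of the Hölder quotient when $d$ is larger than a fixed small scale $r_{0}>0$ depending only on $\Sigma$. So assume $d<r_{0}$.

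\textbf{Near region.} Split $\Sigma=(\Sigma\cap B_{2d}(x))\cup(\Sigma\setminus B_{2d}(x))$. On the near part, estimate $\psi(x)$ and $\psi(z)$ separately by the pointwise bound above. Since $B_{2d}(x)\subset B_{3d}(z)$, each contributes at most $C\kappa\kappa_{0}d^{\alpha}$.

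\textbf{Far region.} Here $|y-x|\geq2d$, so $|y-z|\geq|y-x|/2$, and assumption (ii) on $F$ applies. Write
\[F(y,z)K(y,z)-F(y,x)K(y,x)=(F(y,z)-F(y,x))K(y,z)+F(y,x)(K(y,z)-K(y,x)).\]
For the first term, (ii) on $F$ and (i) on $K$ give the bound $C\kappa\kappa_{0}d^{s+\alpha}|y-x|^{-n-s}$. Integrating over $|y-x|\geq2d$ with the second bound ($\gamma=s$) yields $C\kappa\kappa_{0}d^{\alpha}$. For the second term, I apply the mean value theorem to $K(y,\cdot)$ along a curve in $\Sigma$ joining $x$ to $z$ of length $\leq Cd$. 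Such a curve exists for $d<r_{0}$, e.g. the lift of the straight segment in a local graph chart. Every point $p$ on it satisfies $|y-p|\geq|y-x|-Cd$, and I will arrange $|y-p|\geq|y-x|/4$ (enlarging $2d$ to $C'd$ in the splitting if needed, which does not affect the near estimate). Then (ii) of Definition \ref{D1} gives
\[|K(y,z)-K(y,x)|\leq C\kappa d|y-x|^{-n-2-s}.\]
Multiplying by $|F(y,x)|\leq\kappa_{0}|y-x|^{1+s+\alpha}$ gives $C\kappa\kappa_{0}d|y-x|^{\alpha-1-n}$. Since $\alpha<1$, the second bound with $\gamma=1-\alpha$ integrates this to $C\kappa\kappa_{0}d\cdot d^{\alpha-1}=C\kappa\kappa_{0}d^{\alpha}$.

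Summing the three contributions gives $|\psi(z)-\psi(x)|\leq C\kappa\kappa_{0}|z-x|^{\alpha}$, which together with the sup bound proves the lemma. Condition (iii) of Definition \ref{D1} is not needed, because $F$ already vanishes to order $1+s+\alpha>1+s$ on the diagonal, so no cancellation is required.

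The step I expect to need the most care is the mean value argument on the curved surface. One must produce a path in $\Sigma$ of length comparable to $|z-x|$ that stays at distance comparable to $|y-x|$ from $y$. This is exactly where the smallness $d<r_{0}$ and the uniform local graph representation of $\Sigma$ are used. Everything else is bookkeeping with the two integral bounds.
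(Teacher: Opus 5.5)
Your proof is correct. The paper does not prove this lemma itself; it only quotes it from \cite{VD20}. Your splitting of $\psi(z)-\psi(x)$ at scale $|z-x|$ is the standard argument for this statement: a near region bounded by $|y-x|^{\alpha-n}$, and a far region handled through $(F(y,z)-F(y,x))K(y,z)+F(y,x)(K(y,z)-K(y,x))$.

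You read the misprinted hypothesis (ii) correctly as $|F(y,z)-F(y,x)|\le\kappa_0|z-x|^{s+\alpha}|y-x|$, and the conclusion correctly as $\|\psi\|_{C^{\alpha}}\le C\kappa\kappa_0$. Enlarging the splitting radius so that the mean value step along a curve in $\Sigma$ is legitimate is a sound, and slightly more careful, treatment of the one delicate point.
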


\begin{lem}(\cite{VD20})\label{14-1}
Let $K\in\mathcal{S}_{\kappa}$ and assume $v_{1}\in C^{1+s+\alpha}(\Sigma),\ v_{2}\in C^{s+\alpha}(\Sigma)$ and $v_{3}\in C^{\alpha}(\Sigma)$. Then the function
\[\psi(x)=\int_{\Sigma}(v_{1}(y)-v_{1}(x))v_{2}(y)v_{3}(x)K(y,x)d\mathcal{H}_{y}^{n}\]
is H$\ddot{o}$lder continuous and
\[\|\psi\|_{C^{\alpha}(\Sigma)}\leq C\kappa\|v_{1}\|_{C^{1+s+\alpha}(\Sigma)}\|v_{2}\|_{C^{s+\alpha}(\Sigma)}\|v_{3}\|_{C^{\alpha}(\Sigma)}.\]
\end{lem}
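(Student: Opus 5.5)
The plan is to reduce the claim to two applications of Lemma~\ref{518-1}, plus condition (iii) of Definition~\ref{D1} for the one linear term those two applications cannot absorb.

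\textbf{Removing $v_3$.} Since $v_3(x)$ does not depend on $y$, we have $\psi(x)=v_3(x)\psi_0(x)$, where $\psi_0(x)=\int_{\Sigma}(v_1(y)-v_1(x))v_2(y)K(y,x)\,d\mathcal{H}^n_y$. Because $C^{\alpha}(\Sigma)$ is an algebra, $\|fg\|_{C^\alpha}\le C\|f\|_{C^\alpha}\|g\|_{C^\alpha}$. It therefore suffices to prove
\[
\|\psi_0\|_{C^\alpha(\Sigma)}\le C\kappa\|v_1\|_{C^{1+s+\alpha}(\Sigma)}\|v_2\|_{C^{s+\alpha}(\Sigma)} .
\]

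\textbf{Splitting $\psi_0$ into three terms.} Write $v_2(y)=v_2(x)+(v_2(y)-v_2(x))$. Next, extend $v_1$ to a neighbourhood of $\Sigma$ and split
\[
v_1(y)-v_1(x)=\nabla_\tau v_1(x)\cdot(y-x)+\big[v_1(y)-v_1(x)-\nabla_\tau v_1(x)\cdot(y-x)\big].
\]
This gives
\begin{align*}
\psi_0(x)&=v_2(x)\,\nabla_\tau v_1(x)\cdot\int_{\Sigma}(y-x)K(y,x)\,d\mathcal{H}^n_y\\
&\quad+v_2(x)\int_\Sigma F_1(y,x)K(y,x)\,d\mathcal{H}^n_y+\int_\Sigma F_2(y,x)K(y,x)\,d\mathcal{H}^n_y,
\end{align*}
where
\[
F_1(y,x)=v_1(y)-v_1(x)-\nabla_\tau v_1(x)\cdot(y-x),\qquad F_2(y,x)=(v_1(y)-v_1(x))(v_2(y)-v_2(x)).
\]

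\textbf{The linear term.} By condition (iii) of Definition~\ref{D1}, the integral in the first term is $C^\alpha$ with norm at most $\kappa$. The prefactor $v_2\nabla_\tau v_1$ lies in $C^{s+\alpha}\subset C^\alpha$, with norm at most $C\|v_1\|_{C^{1+s+\alpha}}\|v_2\|_{C^{s+\alpha}}$. So this term is controlled.

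\textbf{The terms with $F_1$ and $F_2$.} For these two terms we verify the hypotheses of Lemma~\ref{518-1} with $\kappa_0\le C\|v_1\|_{C^{1+s+\alpha}}\|v_2\|_{C^{s+\alpha}}$; the $F_1$ term is then multiplied by $v_2\in C^\alpha$, which is harmless.

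\emph{$F_2$.} Condition (i) is immediate: $|F_2|\le \|v_1\|_{C^1}\|v_2\|_{C^{s+\alpha}}|y-x|^{1+s+\alpha}$. For condition (ii), take $|y-x|\ge 2|z-x|$, so that $|y-z|\simeq|y-x|$. Write
\[
F_2(y,z)-F_2(y,x)=(v_1(x)-v_1(z))(v_2(y)-v_2(z))+(v_1(y)-v_1(x))(v_2(x)-v_2(z)).
\]
The first product is bounded by $|z-x|\,|y-x|^{s+\alpha}\le |z-x|^{s+\alpha}|y-x|$, using $|z-x|\le|y-x|$. The second is bounded by $|y-x|\,|z-x|^{s+\alpha}$ directly.

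\emph{$F_1$.} Taylor expansion with $\nabla v_1\in C^{s+\alpha}$ gives $|F_1|\le C\|v_1\|_{C^{1+s+\alpha}}|y-x|^{1+s+\alpha}$. Here the tangential gradient differs from the full ambient gradient only by a normal component. Since $(y-x)\cdot\nu(x)=O(|y-x|^2)$ on the smooth compact surface $\Sigma$, and $2\ge 1+s+\alpha$ because $\alpha<1-s$, this normal part is harmless. For condition (ii) we have the identity
\[
F_1(y,z)-F_1(y,x)=\big[v_1(x)-v_1(z)-\nabla_\tau v_1(z)\cdot(x-z)\big]+(\nabla_\tau v_1(x)-\nabla_\tau v_1(z))\cdot(y-x).
\]
It is bounded by $C(|z-x|^{1+s+\alpha}+|z-x|^{s+\alpha}|y-x|)\le C|z-x|^{s+\alpha}|y-x|$.

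\textbf{Expected obstacle.} The delicate step is condition (ii) for $F_1$ on a curved $\Sigma$ rather than on $\mathbb{R}^n$. One must check that replacing Euclidean increments by tangential gradients, and moving the base point from $x$ to $z$, only produces errors of order $|z-x|^{s+\alpha}|y-x|$ in the regime $|y-x|\ge 2|z-x|$. I would handle this first in a local graph chart around $x$, as in the proof of Lemma~\ref{521-1}, where everything reduces to the flat Taylor estimate. The remaining steps are routine bookkeeping of the constants.
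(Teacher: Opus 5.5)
The paper gives no proof of its own here (the lemma is only quoted from \cite{VD20} in the appendix). Your argument is correct and is essentially the standard one from that source: reduce to $v_3\equiv 1$, control the linear term $v_2(x)\nabla_\tau v_1(x)\cdot\int_\Sigma (y-x)K(y,x)\,d\mathcal{H}^n_y$ by condition (iii) of Definition~\ref{D1}, and treat the Taylor remainder $F_1$ and the product of increments $F_2$ with Lemma~\ref{518-1}. The step you flag as the obstacle is already settled by your exact identity: its bracket is $F_1(x,z)$, so condition (ii) for $F_1$ follows from condition (i) at $(x,z)$ plus the $C^{s+\alpha}$-continuity of $\nabla_\tau v_1$ as an $\mathbb{R}^{n+1}$-valued map, with no chart needed; curvature enters only in condition (i), which your normal-component remark handles.
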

\vspace{5mm}
%\begin{prop}(\cite{VD20})
%Let $X_{1},\cdots,X_{k}\in\mathcal{T}(\Sigma)$ be vector fields with $\|X_{i}\|_{C^{k+2}(\mathbb{S}^{n})}\leq1,\ i=1,\cdots,k$ and assume that $u\in C^{\infty}(\Sigma)$. Then
%$$D_{X_{k}}\cdots D_{X_{1}}(\Delta^{\frac{1+s}{2}}u)=(\Delta^{\frac{1+s}{2}})(D_{X_{k}}\cdots D_{X_{1}}u)+\partial^{k+s}u,$$
%where $\partial^{k+s}u$ denotes a function which satisfies $\|\partial^{k+s}u\|_{C^{\alpha}(\Sigma)}\leq C_{k}\|u\|_{C^{k+s+\alpha}(\Sigma)}$. Moreover, it holds
%$$\|\Delta^{\frac{1+s}{2}}u\|_{C^{k+\alpha}(\Sigma)}\leq C_{k}\|u\|_{C^{k+1+s+\alpha}(\Sigma)},\ k\in\mathbb{N}.$$

%\end{prop}

%\begin{rem}
%The above lemma is also satisfied at $\Sigma=\mathbb{S}_{+}^{n}$.
%\end{rem}

\noindent{\bf Data Availability Statement}
Not applicable.

%\noindent{\bf Acknowledgements}
%The authors are supported by the Natural Science Foundation of China (No. 12271254; 12141104).

\noindent{\bf Conflicts of Interest}
No conflict of interest.
%Plain
\bibliographystyle{unsrt}

\end{sloppypar}
\end{document}